 \let\mathscr\relax
\theoremstyle{definition}
\newtheorem{defin}{Definition}[section]
\theoremstyle{definition}
\theoremstyle{plain}
\newtheorem{theo}[defin]{Theorem}
\theoremstyle{plain}
\newtheorem{prop}[defin]{Proposition}
\theoremstyle{plain}
\newtheorem{lem}[defin]{Lemma}
\theoremstyle{plain}
\newtheorem{cor}[defin]{Corollary}
\theoremstyle{definition}
\newtheorem{rmk}[defin]{Remark}
\theoremstyle{definition}
\theoremstyle{definition}
\theoremstyle{plain}
\theoremstyle{definition}
\newtheorem{notation}[defin]{Notation}
\theoremstyle{definition}
\newtheorem{hyp}[defin]{Hypothesis}
\theoremstyle{definition}
\newtheorem{cond}[defin]{Condition}
\theoremstyle{definition}
\newtheorem{ass}[defin]{Assumption}
\theoremstyle{definition}
\newtheorem*{defin*}{Definition}
\theoremstyle{definition}
\newtheorem*{ex*}{Example}
\theoremstyle{plain}
\newtheorem*{theo*}{Theorem}
\theoremstyle{plain}
\newtheorem*{prop*}{Proposition}
\theoremstyle{plain}
\newtheorem*{lem*}{Lemma}
\theoremstyle{plain}
\newtheorem*{cor*}{Corollary}
\theoremstyle{definition}
\newtheorem*{rmk*}{Remark}
\theoremstyle{definition}
\newtheorem*{exe*}{Exercise}
\theoremstyle{plain}
\newtheorem{theoA}{Theorem}
\theoremstyle{plain}
\theoremstyle{plain}
\newtheorem{condA}[theoA]{Condition}
\theoremstyle{plain}
\numberwithin{equation}{section}
\def\thm@space@setup{%
  \thm@preskip=\parskip \thm@postskip=0pt
}
\setlist[enumerate]{label=(\roman*)}
\def\R{{\mathbf{R}}} 
\def\Bl{{\rm Bl}}
\def\bl{{\rm bl}}
\def\irr{{\rm Irr}}
\def\ker{{\rm Ker}}
\def\aut{{\rm Aut}}
\def\out{{\rm Out}}
\def\cl{{\mathfrak{Cl}}}
\def\GL{{\rm GL}}
\def\n{{\mathbf{N}}} 
\def\c{{\mathbf{C}}} 
\def\z{{\mathbf{Z}}} 
\def\O{{\mathbf{O}}} 
\def\E{{\mathcal{E}}} 
\def\G{{\mathbf{G}}} 
\def\H{{\mathbf{H}}} 
\def\K{{\mathbf{K}}} 
\def\L{{\mathbf{L}}} 
\def\M{{\mathbf{M}}} 
\def\T{{\mathbf{T}}} 
\def\S{{\mathbf{S}}} 
\def\B{{\mathbf{B}}} 
\def\P{{\mathbf{P}}} 
\def\Q{{\mathbf{Q}}} 
\DeclareMathOperator{\HC}{\mathcal{H}\HCkern \mathcal{C}}
\newcommand{\HCkern}{%
  \mkern-1.0mu
  \mathchoice{}{}{\mkern0.2mu}{\mkern0.5mu}%
}
\newcommand{\uset}[3][0ex]{%
  \mathrel{\mathop{#3}\limits_{
    \vbox to#1{\kern-7\ex@
    \hbox{$\scriptstyle#2$}\vss}}}}
\newcommand{\wt}[1]{\widetilde{#1}} 
\newcommand{\wh}[1]{\widehat{#1}}
\newcommand{\ws}[1][1.5]{
  \mathrel{\scalebox{#1}[1]{$\sim$}}
}
\newcommand{\iso}[1]{\ws_{#1}}
\newcommand{\isoc}[1]{\ws_{#1}^c}
\newcommand{\cusp}[1]{{\rm Cusp}_e({#1})}
\def\blfootnote{\gdef\@thefnmark{}\@footnotetext}
\title{
{\huge\bf 
Inductive local-global conditions and generalized Harish-Chandra theory}\\
\author{\Large Damiano Rossi}
\date{}
\blfootnote{\emph{$2010$ Mathematical Subject Classification:} $20$C$15$, $20$C$20$, $20$C$33$.
\\
\emph{Key words and phrases:} Finite groups of Lie type, generalized Harish-Chandra theory, $e$-Harish-Chandra series.
\\
The content of this paper is part of the author's doctoral thesis written in the framework of the research training group \textit{GRK2240: Algebro-geometric Methods in Algebra, Arithmetic and Topology} funded by the DFG. This work is also supported by the EPSRC grant EP/T$004592/1$. The author would like to thank Britta Sp\"ath for proposing a fascinating research topic and for providing countless suggestions, Julian Brough for multiple discussions on the extendibility of characters of $e$-split Levi subgroups and Gunter Malle for helpful comments on generalized Harish-Chandra theory and for a thorough reading of an earlier version of this paper.
}
}
\begin{document}

\renewcommand{\thetheoA}{\Alph{theoA}}

\renewcommand{\thecorA}{\Alph{corA}}

\renewcommand{\thecondA}{\Alph{condA}}

\selectlanguage{english}

\maketitle

\begin{abstract}

We work towards a version of generalized Harish-Chandra theory compatible with Clifford theory and with the action of automorphisms on irreducible characters. This provides a fundamental tool to verify the inductive conditions for the so-called local-global conjectures in representation theory of finite groups in the crucial case of groups of Lie type in non-defining characteristic. In particular, as shown by the author in an earlier paper, this has a strong impact on the verification of the inductive condition for Dade's Conjecture. As a by-product, we also show how to extend the parametrization of generalized Harish-Chandra series given by Broué--Malle--Michel to the non-unipotent case by assuming maximal extendibility. 
\end{abstract}

\tableofcontents

\section{Introduction}


Generalized Harish-Chandra theory is a powerful tool that lies at the heart of many results in modular representation theory of finite groups of Lie type in non-defining characteristic. First introduced by Fong--Srinivasan \cite{Fon-Sri86} for classical groups and then fully developed by Broué--Malle--Michel \cite{Bro-Mal-Mic93} in the unipotent case, this theory extends the classical Harish-Chandra theory formulated by Harish-Chandra \cite{Har-Cha70} and further studied by Howlett--Lehrer \cite{How-Leh80} by replacing Harish-Chandra induction with Deligne--Lusztig induction. The aim of this paper is to extend generalized Harish-Chandra theory, and in particular the parametrization of generalized Harish-Chandra series given in \cite[Theorem 3.2]{Bro-Mal-Mic93}, to the non-unipotent case. In doing so, we also introduce a new compatibility with Clifford theory and the action of automorphisms. For quasi-simple groups of Lie type in non-defining characteristic, this suggests an explanation for the validity of the so-called inductive conditions for the long-standing local-global conjectures in representation theory of finite groups. Together with the main results of \cite[Section 4]{Ros-Generalized_HC_theory_for_Dade}, our results provide a uniform parametrization of characters of groups of Lie type under suitable assumptions. 

More precisely, let $\G$ be a connected reductive group defined over an algebraic closure $\mathbb{F}$ of a field of characteristic $p$ and $F:\G\to \G$ a Frobenius endomorphism endowing $\G$ with an $\mathbb{F}_q$-structure for some power $q$ of $p$. Fix a prime $\ell$ different from $p$ and denote by $e$ the multiplicative order of $q$ modulo $\ell$ or $q$ modulo $4$ if $\ell=2$. All modular representation theoretic notions are considered with respect to the prime $\ell$. For an $e$-cuspidal pair $(\L,\lambda)$ of $\G$, we denote by $\E(\G^F,(\L,\lambda))$ the $e$-Harish-Chandra series associated to $(\L,\lambda)$ and by $W_\G(\L,\lambda)^F:=\n_\G(\L)^F_\lambda/\L^F$ the corresponding relative Weyl group. The parametrization of $e$-Harish-Chandra series associated to unipotent $e$-cuspidal pairs $(\L,\lambda)$ given in \cite[Theorem 3.2]{Bro-Mal-Mic93} shows the existence of a bijection
\begin{equation}
\label{eq:e-HC series and relative Weyl group}
I^\G_{(\L,\lambda)}:\irr\left(W_\G(\L,\lambda)^F\right)\to \E(\G^F,(\L,\lambda)).
\end{equation}
Our first result extends this parametrization to non-unipotent $e$-Harish-Chandra series in groups with connected centre. Moreover, we show that these bijections can be chosen to satisfy certain additional properties. In what follows, we denote by $\aut_\mathbb{F}(\G^F)$ the set of those automorphisms of $\G^F$ which are obtained by restriction from bijective endomorphisms of $\G$ commuting with $F$ (see Section \ref{sec:Automorphisms}).

\begin{theoA}
\label{thm:Main e-Harish-Chandra parametrization for connected center}
If $\hspace{1pt}\G$ has connected centre and $[\G,\G]$ is simple not of type $\mathbf{E}_6$, $\mathbf{E}_7$ or $\mathbf{E}_8$, then there exist a collection of bijections
\[I^{\K}_{(\L,\lambda)}:\irr\left(W_{\K}\left(\L,\lambda\right)^F\right)\to\E\left(\K^F,\left(\L,\lambda\right)\right)\]
where $\K$ runs over the set of $F$-stable Levi subgroups of $\G$ and $(\L,\lambda)$ over the set of $e$-cuspidal pairs of $\K$, such that:
\begin{enumerate}
\item $I^{\K}_{(\L,\lambda)}$ is $\aut_\mathbb{F}(\G^F)_{\K,(\L,\lambda)}$-equivariant;
\item $I^{\K}_{(\L,\lambda)}(\eta)(1)_\ell=\left|\K^F:\n_{\K}(\L,\lambda)^F\right|_\ell\cdot\lambda(1)_\ell\cdot \eta(1)_\ell$ for every $\eta\in\irr(W_{\K}(\L,\lambda))^F$; and
\item if $z\in\z(\K^{*F^*})$ corresponds to characters $\wh{z}_{\L}\in\irr(\L^F/[\L,\L]^F)$ and $\wh{z}_{\K}\in\irr(\K^F/[\K,\K]^F)$ (see Section \ref{sec:Regular embeddings}), then $\lambda\cdot\wh{z}_{\L}$ is $e$-cuspidal, $W_{\K}(\L,\lambda)^F=W_{\K}(\L,\lambda\cdot\wh{z}_{\L})^F$ and
\[I^{\K}_{(\L,\lambda)}\left(\eta\right)\cdot \wh{z}_{\K}=I^{\K}_{(\L,\lambda\cdot \wh{z}_{\L})}\left(\eta\right)\]
for every $\eta\in\irr(W_{\K}(\L,\lambda))^F$.
\end{enumerate}
\end{theoA}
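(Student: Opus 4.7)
The plan is to reduce the theorem to the unipotent case—already established by Brou\'e--Malle--Michel—via Lusztig's Jordan decomposition of characters, exploiting throughout the hypothesis that $\G$, and hence every $F$-stable Levi $\K$ and every $e$-split Levi $\L\leq\K$, has connected centre.

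Fix an $e$-cuspidal pair $(\L,\lambda)$ of $\K$. Since $\L$ has connected centre, $\lambda$ lies in a unique rational Lusztig series $\E(\L^F,s)$ for a semisimple element $s\in\L^{*F^*}$, well-defined up to conjugacy, and the centralizer $\M:=\c_{\L^*}(s)$ is a connected reductive $F^*$-stable Levi subgroup of $\H:=\c_{\K^*}(s)$. The compatibility of $e$-Harish-Chandra theory with Lusztig series yields a unipotent $e$-cuspidal pair $(\M,\mu)$ of $\H$, a bijection $\E(\K^F,(\L,\lambda))\leftrightarrow \E(\H^{F^*},(\M,\mu))$, and a canonical isomorphism $W_\K(\L,\lambda)^F\simeq W_\H(\M,\mu)^{F^*}$. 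Composing this chain with the BMM bijection applied to the unipotent pair $(\M,\mu)$ defines $I^\K_{(\L,\lambda)}$.

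Property (ii) follows from the BMM degree formula in the unipotent case combined with the behavior of character degrees under Jordan decomposition, observing that the relevant index $|\K^F:\H^{F^*}|$ is coprime to $\ell$ (since $\ell\neq p$) and matches $|\K^F:\n_\K(\L,\lambda)^F|_\ell$ on the $\ell$-part after using the duality identification of $\n_\K(\L,\lambda)^F/\L^F$ with $\n_\H(\M,\mu)^{F^*}/\M^{F^*}$. Property (iii) is essentially formal: for $z\in\z(\K^{*F^*})\subseteq\z(\L^{*F^*})$ the twist $\lambda\cdot\wh z_\L$ lies in $\E(\L^F,sz)$, and centrality of $z$ gives $\c_{\K^*}(sz)=\H$ and $\c_{\L^*}(sz)=\M$, so the unipotent reduction of $(\L,\lambda\cdot\wh z_\L)$ coincides with that of $(\L,\lambda)$. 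Choosing Jordan decompositions coherently along the $\z(\K^{*F^*})$-orbit of $s$ then ensures that the two parametrizations differ precisely by the tensor action of $\wh z_\K$; in particular, $\lambda\cdot\wh z_\L$ is automatically $e$-cuspidal and the relative Weyl groups coincide.

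The main obstacle is property (i). On the unipotent side, the BMM bijection is already known to intertwine the $\aut_\mathbb{F}(\G^F)$-action naturally carried by the relative Weyl group; what is delicate is that Lusztig's Jordan decomposition is not automorphism-equivariant in general, the obstruction being controlled by choices of isomorphisms with the dual group that depend on the type of $\G$. This is precisely why the hypothesis excludes $\mathbf{E}_6,\mathbf{E}_7,\mathbf{E}_8$: for the remaining simple types (classical groups together with the exceptional types $\mathbf{G}_2,\mathbf{F}_4$ and their twisted variants), equivariant Jordan decompositions for groups with connected centre are available through the work of Cabanes--Sp\"ath, Digne--Michel and Taylor. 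The technical heart of the proof is thus to assemble a global choice of Jordan decompositions—uniform across all Levis $\K$, all $e$-cuspidal pairs $(\L,\lambda)$ and all central twists $z$—so that the unipotent BMM bijection, the duality isomorphism $W_\K(\L,\lambda)^F\simeq W_\H(\M,\mu)^{F^*}$, and the Jordan decomposition itself simultaneously intertwine $\aut_\mathbb{F}(\G^F)_{\K,(\L,\lambda)}$, while preserving the degree and central-twist identities established above.
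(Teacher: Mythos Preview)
Your approach is essentially the same as the paper's: define $I^\K_{(\L,\lambda)}$ by transporting the Brou\'e--Malle--Michel bijection for the unipotent $e$-cuspidal pair $(\c_{\L^*}(s),J_{\L,s}(\lambda))$ of $\c_{\K^*}(s)$ through the equivariant Jordan decomposition $J_{\K,s}$ and the relative Weyl group isomorphism $W_\K(\L,\lambda)^F\simeq W_{\c_{\K^*}(s)}(\c_{\L^*}(s),J_{\L,s}(\lambda))^{F^*}$. The paper treats (i), (ii), (iii) in three short lemmas exactly along the lines you sketch.

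One point in your argument for (ii) is incorrect, however. You claim that the index $|\K^{*F^*}:\c_{\K^*}(s)^{F^*}|$ is coprime to $\ell$ ``since $\ell\neq p$''. This is false in general: that index is a $p'$-number, but it can certainly be divisible by $\ell$ (e.g.\ when $s$ has nontrivial $\ell$-part). The correct computation does not need this. Writing $\H=\c_{\K^*}(s)$ and $\M=\c_{\L^*}(s)$, the Jordan decomposition degree formula gives
\[
I^\K_{(\L,\lambda)}(\eta)(1)_\ell=\bigl|\K^{*F^*}:\H^{F^*}\bigr|_\ell\cdot I^{\H}_{(\M,\mu)}\bigl(\eta^{i}\bigr)(1)_\ell
\quad\text{and}\quad
\lambda(1)_\ell=\bigl|\L^{*F^*}:\M^{F^*}\bigr|_\ell\cdot\mu(1)_\ell,
\]
and the BMM formula gives $I^{\H}_{(\M,\mu)}(\eta^{i})(1)_\ell=|\H^{F^*}:\n_{\H}(\M,\mu)^{F^*}|_\ell\cdot\mu(1)_\ell\cdot\eta(1)_\ell$. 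Combining these and using $|\K^{*F^*}|_\ell=|\K^F|_\ell$, $|\L^{*F^*}|_\ell=|\L^F|_\ell$ together with the Weyl group isomorphism $|\n_\K(\L,\lambda)^F:\L^F|=|\n_\H(\M,\mu)^{F^*}:\M^{F^*}|$ yields the stated formula; the $\ell$-parts of the two Jordan indices do not vanish separately but cancel against each other in the ratio. With this correction your outline is complete and matches the paper.
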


In Theorem \ref{thm:Compatibility with DL-induction} we consider similar bijections $\mathcal{I}^\K_{(\L,\lambda)}$ and obtain \emph{$e$-Harish-Chandra theory} (as defined in \cite[Definition 2.9]{Kes-Mal13}) above any $e$-cuspidal pair in groups with connected centre. Notice that the restrictions on the type of $\G$ are mainly due to the fact that the Mackey formula is not known to hold in full generality. In addition for types $\mathbf{E}_6$, $\mathbf{E}_7$ and $\mathbf{E}_8$ it is not known whether there exists a Jordan decomposition map which commutes with Deligne--Lusztig induction. If these two properties were to be established for types ${\bf{E}}_6$, ${\bf{E}}_7$ or ${\bf{E}}_8$, then Theorem \ref{thm:Main e-Harish-Chandra parametrization for connected center} would also hold in the excluded cases.

Before proceeding further, we make a remark inspired by \cite{Mal07} and \cite{Mal14}. If $\lambda$ has an extension $\wh{\lambda}$ to $\n_\G(\L)^F_\lambda$, then Gallagher's theorem and the Clifford correspondence implies that there is a bijection
\begin{align*}
\irr\left(W_\G(\L,\lambda)^F\right)&\to\irr\left(\n_\G(\L)^F\enspace\middle|\enspace\lambda\right)
\\
\eta&\mapsto \left(\wh{\lambda}\eta\right)^{\n_\G(\L)^F}.
\end{align*}
In this case, \eqref{eq:e-HC series and relative Weyl group} is equivalent to the following bijection
\begin{equation}
\label{eq:e-HC series, parametrization with normalizer}
\Omega^\G_{(\L,\lambda)}:\E(\G^F,(\L,\lambda))\to\irr\left(\n_\G(\L)^F\enspace\middle|\enspace\lambda\right).
\end{equation}
Observe that the extendibility of the character $\lambda$ to its stabilizer $\n_\G(\L)^F_\lambda$ is expected to hold in general and is known in a plethora of cases.

Working with the formulation given in \eqref{eq:e-HC series, parametrization with normalizer} and using Theorem \ref{thm:Main e-Harish-Chandra parametrization for connected center}, we obtain a parametrization of $e$-Harish-Chandra series for groups with non-connected centre by assuming maximal extendibility for certain characters of $e$-split Levi subgroups. Recall that if $i:\G\to\wt{\G}$ is a regular embedding and $\L$ is a Levi subgroup of $\G$, then $\wt{\L}:=\L\z(\wt{\G})$ is a Levi subgroup of $\wt{\G}$. Moreover, for any connected reductive group $\H$ with Frobenius endomorphism $F$, we denote by $\cusp{\H^F}$ the set of (irreducible) $e$-cuspidal characters of $\H^F$.

\begin{theoA}
\label{cor:Main bijections for nonconnected centre}
Let $\G$ be simple, simply connected not of type $\mathbf{E}_6$, $\mathbf{E}_7$ or $\mathbf{E}_8$ and consider $\ell\in\Gamma(\G,F)$ with $\ell\geq 5$. Let $\K$ be an $F$-stable Levi subgroup of $\G$, $(\L,\lambda)$ an $e$-cuspidal pair of $\K$ and suppose there exists an $(\aut_\mathbb{F}(\G^F)_{\K,\L}\ltimes \irr(\wt{\G}^F/\G^F))$-equivariant extension map for $\cusp{\wt{\L}^F}$ with respect to $\wt{\L}^F\unlhd \n_{\wt{\K}}(\L)^F$ (see the discussion following Definition \ref{def:Maximal extendibility}). Then, there exists an $\aut_\mathbb{F}(\G^F)_{\K,(\L,\lambda)}$-equivariant bijection
\[\Omega_{(\L,\lambda)}^\K:\E\left(\K^F,(\L,\lambda)\right)\to\irr\left(\n_{\K}(\L)^F\enspace\middle|\enspace\lambda\right)\]
that preserves the $\ell$-defect of characters.
\end{theoA}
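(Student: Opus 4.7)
The plan is to reduce Theorem \ref{cor:Main bijections for nonconnected centre} to Theorem \ref{thm:Main e-Harish-Chandra parametrization for connected center} by passing through a regular embedding $i:\G\to\wt{\G}$, thereby enlarging the centre to be connected, and then descending to $\G$ via Clifford theory. Concretely, I would set $\wt{\K}:=\K\z(\wt{\G})$ and $\wt{\L}:=\L\z(\wt{\G})$, which are $F$-stable Levi subgroups of $\wt{\G}$. Since $\G$ is simple simply connected, $[\wt{\G},\wt{\G}]=\G$ is simple not of type $\mathbf{E}_6,\mathbf{E}_7$ or $\mathbf{E}_8$, so Theorem \ref{thm:Main e-Harish-Chandra parametrization for connected center} applies to $\wt{\G}$ and yields bijections $I^{\wt{\K}}_{(\wt{\L},\wt{\lambda}')}$ satisfying properties (i)--(iii), where $\wt{\lambda}'\in\irr(\wt{\L}^F)$ ranges over the characters lying above $\lambda$.

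Next I would use the maximal extendibility assumption to convert each $I^{\wt{\K}}_{(\wt{\L},\wt{\lambda}')}$ into the normalizer formulation
\[\Omega^{\wt{\K}}_{(\wt{\L},\wt{\lambda}')}:\E\bigl(\wt{\K}^F,(\wt{\L},\wt{\lambda}')\bigr)\to\irr\bigl(\n_{\wt{\K}}(\L)^F\mid\wt{\lambda}'\bigr)\]
exactly as in the Gallagher/Clifford correspondence discussion that precedes the statement of Theorem \ref{cor:Main bijections for nonconnected centre}; here one uses $\n_{\wt{\K}}(\wt{\L})=\n_{\wt{\K}}(\L)$ and the fact that each $\wt{\lambda}'$ is an $e$-cuspidal character of $\wt{\L}^F$, so that the hypothesis on the extension map from $\cusp{\wt{\L}^F}$ applies.

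Having obtained the parametrization at the level of $\wt{\G}$, the next step is a Clifford-theoretic descent on both sides from $\wt{\K}^F$ to $\K^F$. By standard regular-embedding Clifford theory, $\E(\K^F,(\L,\lambda))$ is in natural bijection with the $\irr(\wt{\K}^F/\K^F)$-orbits of $\bigsqcup_{\wt{\lambda}'}\E(\wt{\K}^F,(\wt{\L},\wt{\lambda}'))$, and analogously $\irr(\n_\K(\L)^F\mid\lambda)$ is in bijection with the $\irr(\wt{\K}^F/\K^F)$-orbits of $\bigsqcup_{\wt{\lambda}'}\irr(\n_{\wt{\K}}(\L)^F\mid\wt{\lambda}')$, where in both cases $\wt{\lambda}'$ runs over the extensions of $\lambda$ to $\wt{\L}^F$. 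Gluing the bijections $\Omega^{\wt{\K}}_{(\wt{\L},\wt{\lambda}')}$ across all such $\wt{\lambda}'$ and passing to orbits is then expected to yield the desired bijection $\Omega^\K_{(\L,\lambda)}$.

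The main difficulty will lie in the coherence of the two Clifford descents: one must verify that the tensor action of $\irr(\wt{\K}^F/\K^F)$ on the left-hand family $\E(\wt{\K}^F,(\wt{\L},\wt{\lambda}'))$ corresponds, under $\Omega^{\wt{\K}}_{(\wt{\L},\wt{\lambda}')}$, to the analogous action on $\irr(\n_{\wt{\K}}(\L)^F\mid\wt{\lambda}')$. This is precisely the purpose of property (iii) of Theorem \ref{thm:Main e-Harish-Chandra parametrization for connected center}, which describes the effect of tensoring by $\wh{z}_{\wt{\K}}$ as a change of the cuspidal datum $\wt{\lambda}'\mapsto\wt{\lambda}'\cdot\wh{z}_{\wt{\L}}$; combined with the $\irr(\wt{\G}^F/\G^F)$-equivariance demanded of the extension map, this should match the tensor action on the normalizer side and yield a well-defined descent. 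Once the bijection is constructed, $\aut_\mathbb{F}(\G^F)_{\K,(\L,\lambda)}$-equivariance will follow from property (i) and from the $\aut_\mathbb{F}(\G^F)_{\K,\L}$-equivariance of the extension map, using that automorphisms in $\aut_\mathbb{F}(\G^F)$ lift to $\wt{\G}^F$ through the regular embedding. Finally, preservation of $\ell$-defect should reduce to property (ii) together with the standard degree formula $\psi(1)=\left|\n_\K(\L)^F:\n_\K(\L,\lambda)^F\right|\cdot\lambda(1)\cdot\eta(1)$ for the Clifford-induced character $\psi\in\irr(\n_\K(\L)^F\mid\lambda)$, with the hypothesis $\ell\in\Gamma(\G,F)$, $\ell\geq 5$ ensuring that the relevant relative Weyl group indices are $\ell'$-numbers so that $\ell$-parts can be tracked directly.
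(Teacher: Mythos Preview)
Your overall strategy---pass to a regular embedding $\wt{\G}$, apply Theorem~\ref{thm:Main e-Harish-Chandra parametrization for connected center} at the level of $\wt{\K}$, convert to the normalizer formulation via the given extension map, and descend by Clifford theory---is exactly the route the paper takes. The gap is in your descent step. You assert that $\E(\K^F,(\L,\lambda))$ is in natural bijection with the $\irr(\wt{\K}^F/\K^F)$-orbits on $\bigsqcup_{\wt{\lambda}'}\E(\wt{\K}^F,(\wt{\L},\wt{\lambda}'))$. This is false: the natural map $\chi\mapsto\irr(\wt{\K}^F\mid\chi)$ does send $\chi$ to such an orbit, but it is only a surjection whose fibres are the $\wt{\K}^F$-orbits on $\E(\K^F,(\L,\lambda))$. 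Since $\wt{\K}^F=\wt{\L}^F\K^F$ and $\wt{\L}^F$ need not fix each $\chi$, distinct characters in the series can collapse to the same orbit. The analogous issue occurs on the local side. So ``passing to orbits'' gives a bijection between orbit sets, not between the character sets themselves; to upgrade it one must show that stabilizers of corresponding elements agree, and this is not a formality.

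The paper handles this by first assembling all the maps $\Upsilon^{\wt{\K}}_{(\wt{\L},\wt{\lambda}')}$ into a single bijection
\[
\wt{\Omega}^{\K}_{(\L,\lambda)}:\irr\bigl(\wt{\K}^F\mid\E(\K^F,(\L,\lambda))\bigr)\to\irr\bigl(\n_{\wt{\K}}(\L)^F\mid\lambda\bigr)
\]
that is equivariant for the larger group $\bigl((\wt{\G}^F\mathcal{A})_{\K,(\L,\lambda)}\ltimes\mathcal{K}\bigr)$, not just for $\mathcal{K}$ (Theorem~\ref{thm:Bijection for connected center}); already this gluing uses the partition results of Proposition~\ref{prop:Covering Brauer--Lusztig blocks and cuspidality with actions, partitions}, which rely on uniqueness of $e$-cuspidal support and hence on the hypotheses $\ell\in\Gamma(\G,F)$, $\ell\ge5$. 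The descent then proceeds by choosing an $\bigl((\wt{\G}^F\mathcal{A})_{\K,(\L,\lambda)}\ltimes\mathcal{K}\bigr)$-transversal upstairs, picking constituents below to obtain a $(\wt{\G}^F\mathcal{A})_{\K,(\L,\lambda)}$-transversal in $\E(\K^F,(\L,\lambda))$, and extending by equivariance; well-definedness comes from the stabilizer equality $(\wt{\G}^F\mathcal{A})_{\K,(\L,\lambda),\chi}=(\wt{\G}^F\mathcal{A})_{\K,(\L,\lambda),\psi}$, which is exactly what the full $A\ltimes\mathcal{K}$-equivariance of $\wt{\Omega}^{\K}_{(\L,\lambda)}$ provides (compare the argument in Lemma~\ref{lem:Local star condition follows from Global star condition, central} and the proof of Theorem~\ref{thm:Criterion, central}). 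Your final remark about relative Weyl group indices being $\ell'$ is neither needed nor correct in general; defect preservation at the $\wt{\K}$-level is Lemma~\ref{lem:Bijection for connected center, defect}, and on descent it follows from the stabilizer equality via $\wt{\K}^F_\chi/\K^F\cong\n_{\wt{\K}}(\L)^F_\psi/\n_{\K}(\L)^F$.
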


The extendibility condition considered in Theorem \ref{cor:Main bijections for nonconnected centre} should be compared with condition ${\rm{B}(d)}$ of \cite[Definition 2.2]{Cab-Spa19}. In particular, this has been shown to hold for groups of type $\mathbf{A}$ and $\mathbf{C}$ (see \cite{Bro-Spa20} and \cite{Bro22} respectively) and is expected to hold in all cases. Due to these results, in Section \ref{sec:Stabilizers, extendibility and consequences} we obtain consequences for groups of type $\mathbf{A}$ and $\mathbf{C}$ (see Corollary \ref{cor:Bijections for nonconnected centre, type A and C}).


Most importantly, by working with the formulation given in \eqref{eq:e-HC series, parametrization with normalizer} we are able to compare the Clifford theory of corresponding characters via equivalence relations on character triples as defined in \cite{Nav-Spa14I} and \cite{Spa17}. This idea has been introduced in \cite[Condition D]{Ros-Generalized_HC_theory_for_Dade} and provides an adaptation of generalized Harish-Chandra theory to the framework of the inductive conditions for the so-called Local--Global conjectures in representation theory of finite groups.

\begin{condA}
\label{cond:Main iEBC}
Let $\G$, $F$, $\ell$, $q$ and $e$ be as above. For every $e$-cuspidal pair $(\L,\lambda)$ of $\hspace{1pt}\G$ there exists a defect preserving $\aut_\mathbb{F}(\G^F)_{(\L,\lambda)}$-equivariant bijection
\[\Omega^\G_{(\L,\lambda)}:\E\left(\G^F,(\L,\lambda)\right)\to\irr\left(\n_\G(\L)^F\enspace\middle|\enspace \lambda\right)\]
such that
\[\left(X_\vartheta,\G^F,\vartheta\right)\iso{\G^F}\left(\n_{X_\vartheta}(\L),\n_{\G^F}(\L),\Omega^\G_{(\L,\lambda)}(\vartheta)\right)\]
in the sense of \cite[Definition 3.6]{Spa17} for every $\vartheta\in\E\left(\G^F,(\L,\lambda)\right)$ and where $X:=\G^F\rtimes \aut_\mathbb{F}(\G^F)$.
\end{condA}

The bijections described in Condition \ref{cond:Main iEBC} play a central role in the verification of the inductive conditions for the local-global conjectures. In particular, in \cite[Theorem E]{Ros-Generalized_HC_theory_for_Dade} the author shows that Condition \ref{cond:Main iEBC} implies the inductive condition for Dade's Conjecture for quasi-simple groups of Lie type in good non-defining characteristic.

In section \ref{sec:Criteria}, we prove a criterion for Condition \ref{cond:Main iEBC} (see Theorem \ref{thm:Criterion}) and show that its validation reduces to the verification of certain requirements related to the extendibility of characters of $e$-split Levi subgroups. These requirements (see Definition \ref{def:Star conditions}) are analogous to the one considered in \cite[Definition 2.2]{Cab-Spa19} and have already been studied when verifying the inductive McKay condition (see \cite{Spa12}, \cite{Cab-Spa13}, \cite{Mal-Spa16}, \cite{Cab-Spa17I}, \cite{Cab-Spa17II}, \cite{Cab-Spa19}, \cite{Spa21}) as well as the inductive condition for the Alperin--McKay and the Alperin Weight conjectures (see \cite{Spa13II}, \cite{Mal14}, \cite{Sch14}, \cite{Cab-Spa15}, \cite{Kos-Spa16I}, \cite{Kos-Spa16II}, \cite{Bro-Spa20}, \cite{Bro-Spa21}, \cite{Bro22}). Thanks to \cite[Theorem E]{Ros-Generalized_HC_theory_for_Dade}, our approach also provides a way to tackle Dade's Conjecture and its inductive condition by utilizing the theory that has already been developed to verify the inductive conditions for the other counting conjectures.



If $(\L,\lambda)$ is an $e$-cuspidal pair of $\G$, then we say that $(\L,\lambda)$ is \emph{$e$-Brauer--Lusztig-cuspidal} in $\G$ if the associated $e$-Harish-Chandra series $\E(\G^F,(\L,\lambda))$ coincides with a Brauer--Lusztig block $\E(\G^F,B,[s])$ as defined in \cite[Definition 4.13]{Ros-Generalized_HC_theory_for_Dade}.

\begin{theoA}
\label{thm:Main iEBC follows from extensions}
Suppose that $\G$ is simple, simply connected not of type $\mathbf{E}_6$, $\mathbf{E}_7$ or $\mathbf{E}_8$ and consider $\ell\in\Gamma(\G,F)$ with $\ell \geq 5$. Let $\L$ be an $e$-split Levi subgroup of $\hspace{1pt}\G$ and suppose that the following conditions hold:
\begin{enumerate}
\item maximal extendibility holds with respect to $\n_\G(\L)^F\unlhd \n_{\wt{\G}}(\L)^F$ in the sense of Definition \ref{def:Maximal extendibility};
\item there exists an $(\aut_\mathbb{F}(\G^F)_\L\ltimes \irr(\wt{\G}^F/\G^F))$-equivariant extension map for $\cusp{\wt{\L}^F}$ with respect to $\wt{\L}^F\unlhd \n_{\wt{\G}}(\L)^F$ (see the discussion following Definition \ref{def:Maximal extendibility});
\item the requirements from Definition \ref{def:Star conditions} hold for $\L\leq \G$;
\end{enumerate}
Then Condition \ref{cond:Main iEBC} holds for every $e$-Brauer--Lusztig-cuspidal pair $(\L,\lambda)$ of $\hspace{1pt}\G$ such that $\out(\G^F)_\mathcal{B}$ is abelian and where $\mathcal{B}$ is the $\wt{\G}^F$-orbit of $\hspace{1pt}\bl(\lambda)^{\G^F}$. 
\end{theoA}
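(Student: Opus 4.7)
The plan is to deduce Theorem \ref{thm:Main iEBC follows from extensions} by combining the parametrization of Theorem \ref{cor:Main bijections for nonconnected centre} with the criterion for Condition \ref{cond:Main iEBC} announced in Section \ref{sec:Criteria}.

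First I would apply Theorem \ref{cor:Main bijections for nonconnected centre} with $\K=\G$: under hypothesis (ii) it supplies an $\aut_\mathbb{F}(\G^F)_{(\L,\lambda)}$-equivariant, defect-preserving bijection
\[\Omega^\G_{(\L,\lambda)}:\E(\G^F,(\L,\lambda))\to\irr\left(\n_\G(\L)^F\enspace\middle|\enspace\lambda\right).\]
This gives the underlying map demanded by Condition \ref{cond:Main iEBC}; what remains is to upgrade the mere equivariance to the central-isomorphism relation $\iso{\G^F}$ on character triples in the sense of \cite[Definition 3.6]{Spa17}.

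Second, I would invoke the criterion Theorem \ref{thm:Criterion} (to be proved in Section \ref{sec:Criteria}). As announced in the introduction, this criterion reduces the central-isomorphism clause of Condition \ref{cond:Main iEBC} to extendibility-type requirements for characters of the $e$-split Levi $\L$, modelled on condition ${\rm B}(d)$ of \cite[Definition 2.2]{Cab-Spa19}. Hypotheses (i)--(iii) are set up to feed exactly this criterion: (i) handles maximal extendibility across the normalizer pair $\n_\G(\L)^F\unlhd \n_{\wt{\G}}(\L)^F$; (ii) produces an equivariant extension map for $e$-cuspidal characters of $\wt{\L}^F$ up to $\n_{\wt{\G}}(\L)^F$; and (iii) encodes the ``star'' compatibility between the two families of extensions. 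The $e$-Brauer--Lusztig-cuspidality hypothesis on $(\L,\lambda)$ identifies $\E(\G^F,(\L,\lambda))$ with a Brauer--Lusztig block $\E(\G^F,B,[s])$ in the sense of \cite[Definition 4.13]{Ros-Generalized_HC_theory_for_Dade}; this aligns the partition by $e$-Harish-Chandra series with the partition by $\wt{\G}^F$-orbits of blocks and allows the Gallagher twists by $\irr(\wt{\G}^F/\G^F)$ built into (ii) to be carried through $\Omega^\G_{(\L,\lambda)}$.

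Third, to convert the equivariant data produced by (i)--(iii) into the central isomorphism $\iso{\G^F}$, one must glue compatible extensions to the stabilisers inside $X=\G^F\rtimes \aut_\mathbb{F}(\G^F)$; this is the step where the assumption that $\out(\G^F)_\mathcal{B}$ is abelian enters, since it controls (and in fact kills) the cohomological obstruction appearing in the Butterfly-type gluing used in the proof of Theorem \ref{thm:Criterion}, so that projective representatives with non-trivial two-cocycle need not be compared. The main obstacle is precisely this last step, namely the passage from equivariance to central isomorphism of character triples; it is the content of the criterion Theorem \ref{thm:Criterion}, and modulo that result the present theorem becomes a verification that hypotheses (i)--(iii), together with $e$-Brauer--Lusztig-cuspidality and abelianness of $\out(\G^F)_\mathcal{B}$, supply every input the criterion requires.
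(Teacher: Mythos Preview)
Your overall strategy—verify the hypotheses of the criterion Theorem \ref{thm:Criterion} and then invoke it—is exactly what the paper does (via the slightly more general Theorem \ref{thm:iEBC follows from extensions}). However, two points in your proposal are off.

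First, the detour through Theorem \ref{cor:Main bijections for nonconnected centre} is unnecessary and slightly misleading. The criterion does not take a pre-built bijection $\Omega^\G_{(\L,\lambda)}$ at the $\G$-level and ``upgrade'' it; rather, its input is the bijection $\wt{\Omega}^\G_{(\L,\lambda)}:\wt{\mathcal{G}}\to\wt{\mathcal{N}}$ at the $\wt{\G}$-level (Assumption \ref{ass:Assumption for the criterion} (ii)), and the criterion then \emph{constructs} $\Omega^\G_{(\L,\lambda)}$ by choosing compatible transversals (see Proposition \ref{prop:Criterion} and the proof of Theorem \ref{thm:Criterion}). The required $\wt{\Omega}^\G_{(\L,\lambda)}$ comes from Theorem \ref{thm:Bijection for connected center}, which is where hypothesis (ii) on the equivariant extension map is actually consumed. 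Theorem \ref{cor:Main bijections for nonconnected centre} is itself a corollary of this same $\wt{\Omega}$, so invoking it as a first step duplicates work already inside the criterion.

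Second, and more substantively, you have misidentified the role of the hypothesis that $\out(\G^F)_\mathcal{B}$ is abelian. It does \emph{not} kill a cohomological obstruction in the gluing of projective representations: the factor-set comparison in the proof of Theorem \ref{thm:Criterion} is handled entirely by the $(A\ltimes\mathcal{K})$-equivariance of $\wt{\Omega}^\G_{(\L,\lambda)}$, via the same argument as in Theorem \ref{thm:Criterion, central}, and does not use the abelian hypothesis at all. The abelian condition enters instead as Assumption \ref{ass:Assumption for the criterion} (v.a): it guarantees (see Remark \ref{rmk:Condition on block stability} and \cite[Lemma 4.7]{Bro-Spa21}) that every $\wt{\G}^F$-conjugate of a character $\chi$ satisfying the star condition (iii) still satisfies it. This freedom is what allows Proposition \ref{prop:Criterion} to pick $\chi$ with the correct block-induction compatibility $\bl(\wh{\chi}_J)=\bl(\wh{\psi}_{\n_{\wt{\G}}(\L)^F_\chi\cap J})^J$, which is the genuinely block-theoretic ingredient needed to pass from $\isoc{\G^F}$ to $\iso{\G^F}$. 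The $e$-Brauer--Lusztig-cuspidality hypothesis (Assumption \ref{ass:Assumption for the criterion} (vi)) is used in the same place, to ensure the chosen $\chi$ actually lies in the series $\E(\G^F,(\L,\lambda))$.
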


Assumptions (i), (ii) and (iii) of Theorem \ref{thm:Main iEBC follows from extensions} are part of an important ongoing project in representation theory of finite groups of Lie type and have been verified for groups of type $\bf{A}$ (under certain block theoretic restrictions) and $\bf{C}$ in \cite{Bro-Spa20} and \cite{Bro22} respectively (see Remark \ref{rmk:Global star} and Lemma \ref{rmk:Local star}). Assumption (i) holds in almost all cases since $\wt{\G}^F/\G^F$ is cyclic for groups not of type $\bf{D}$ (see \cite[Proposition 1.7.5]{Gec-Mal20}). Next, we observe that every $e$-cuspidal pair associated to an $\ell$-regular semisimple element is $e$-Brauer--Lusztig-cuspidal by \cite[Theorem A]{Ros-Generalized_HC_theory_for_Dade} and \cite[Theorem 4.1]{Cab-Eng99} while it is natural to expect that every $e$-cuspidal pair is $e$-Brauer--Lusztig-cuspidal under suitable assumptions on $\ell$ (see the discussion following \cite[Theorem 4.15]{Ros-Generalized_HC_theory_for_Dade}). Finally, the block theoretic restriction of Theorem \ref{thm:Main iEBC follows from extensions} holds, for instance, whenever $\G$ is not of type $\mathbf{A}$, $\mathbf{D}$ or $\mathbf{E}_6$. In Theorem \ref{thm:iEBC follows from extensions} we prove a slightly more general result by considering a larger class of blocks. We plan to circumvent the obstructions appearing in the remaining cases by applying techniques developed by Bonnafé--Dat--Rouquier \cite{Bon-Dat-Rou17} and Ruhstorfer \cite{Ruh22} on quasi-isolated blocks.

In Section \ref{sec:Stabilizers, extendibility and consequences}, as a corollary of Theorem \ref{thm:Main iEBC follows from extensions} and by using \cite[Theorem 4.1]{Cab-Spa17I} and \cite[Theorem 1.2 and Corollary 4.7]{Bro-Spa20}, we obtain Condition \ref{cond:Main iEBC} for some cases in type $\mathbf{A}$ (see Corollary \ref{cor:Type A}). Similarly, by using \cite[Theorem 3.1]{Cab-Spa17II} and \cite[Theorem 1.1 and Theorem 1.2]{Bro22}, we obtain Condition \ref{cond:Main iEBC} for all $e$-Brauer--Lusztig-cuspidal pairs of groups of type $\mathbf{C}$ whenever $\ell\geq 5$ (see Corollary \ref{cor:Type C}).




The paper is organised as follows. In Section \ref{sec:Preliminaries} we introduce our notation and recall some preliminary results. We also introduce the notion of $e$-Brauer--Lusztig-cuspidality in Definition \ref{def:e-BL-cuspidality} and state a weaker version of Condition \ref{cond:Main iEBC} by replacing $\G^F$-block isomorphisms of character triples with $\G^F$-central isomorphisms of character triples (see Condition \ref{cond:cEBC}). In Section \ref{sec:Constructing bijections} we prove Theorem \ref{thm:Main e-Harish-Chandra parametrization for connected center} which provides an extension of \cite[Theorem 3.2]{Bro-Mal-Mic93} to non-unipotent $e$-cuspidal pairs in groups with connected centre and type different from $\mathbf{E}_6$, $\mathbf{E}_7$ or $\mathbf{E}_8$. Then, assuming maximal extendibility, we develop a Clifford theory for $e$-Harish-Chandra series with respect to regular embeddings. This is done by applying the results obtained in \cite[Section 4]{Ros-Generalized_HC_theory_for_Dade}. As a consequence, we construct certain bijections needed in the criteria proved in the subsequent section (see Theorem \ref{thm:Bijection for connected center}). In Section \ref{sec:Criteria} we prove Theorem \ref{thm:Criterion} which provides a criterion for Condition \ref{cond:Main iEBC}. On the way to prove this result we also consider the weaker Condition \ref{cond:cEBC} and prove a criterion for this condition in Theorem \ref{thm:Criterion, central}. Finally, in Section \ref{sec:Stabilizers, extendibility and consequences} we combine the results obtained in Section \ref{sec:Constructing bijections} and Section \ref{sec:Criteria} in order to obtain Theorem \ref{cor:Main bijections for nonconnected centre} and Theorem \ref{thm:Main iEBC follows from extensions}. In Definition \ref{def:Star conditions} we give a definition of certain requirements on stabilizers and extendibility of characters that should be compared with \cite[Definition 2.2]{Cab-Spa19}. Then, applying the main results of \cite{Bro-Spa20} and \cite{Bro22}, we obtain consequences for groups of type $\mathbf{A}$ and $\mathbf{C}$ and prove Corollary \ref{cor:Bijections for nonconnected centre, type A and C}, Corollary \ref{cor:Type A} and Corollary \ref{cor:Type C}. We also prove similar results for Condition \ref{cond:cEBC} (see Corollary \ref{cor:cEBC for type A} and Corollary \ref{cor:cEBC for type C}).

\section{Preliminaries}
\label{sec:Preliminaries}

In this paper, $\G$ is a connected reductive group defined over an algebraic closure $\mathbb{F}$ of a finite field of characteristic $p$ and $F:\G\to \G$ is a Frobenius endomorphism endowing $\G$ with an $\mathbb{F}_q$-structure for a power $q$ of $p$. Let $(\G^*,F^*)$ be a group in duality with $(\G,F)$ with respect to a choice of an $F$-stable maximal torus $\T$ of $\G$ and an $F^*$-stable maximal torus $\T^*$ of $\G^*$. Then, there is a bijection $\L\mapsto \L^*$ between the set of Levi subgroups of $\G$ containing $\T$ and the set of Levi subgroups of $\G^*$ containing $\T^*$ (see \cite[p.123]{Cab-Eng04}). This bijection induces a correspondence between the set of $F$-stable Levi subgroups of $\G$ and the set of $F^*$-stable Levi subgroups of $\G^*$.

\subsection{Regular embeddings}
\label{sec:Regular embeddings}

Let $\G$, $\wt{\G}$ be connected reductive groups with Frobenius endomorphisms $F:\G\to \G$ and $\wt{F}:\wt{\G}\to \wt{\G}$. A morphism of algebraic groups $i:\G\to \wt{\G}$ is a regular embedding if $\wt{F}\circ i=i\circ F$ and $i$ induces an isomorphism of $\G$ with a closed subgroup $i(\G)$ of $\wt{\G}$, the centre $\z(\wt{\G})$ of $\wt{\G}$ is connected and $[i(\G),i(\G)]=[\wt{\G},\wt{\G}]$. In this case we can identify $\G$ with its image $i(\G)$ and $\wt{F}$ with an extension of $F$ to $\wt{\G}$ which, by abuse of notation, we denote again by $F$. 

Since $[\wt{\G},\wt{\G}]$ is contained in $\G$, we deduce that $\G$ is normal in $\wt{\G}$ and that $\wt{\G}/\G$ is abelian. Moreover, as $\wt{\G}$ is connected and reductive, we have $\wt{\G}=\z(\wt{\G})[\wt{\G},\wt{\G}]=\z(\wt{\G})\G$. In particular, it follows that $\z(\G)=\z(\wt{\G})\cap \G$. Similarly, $[\wt{\G}^F,\wt{\G}^F]\leq \G^F$ and hence $\G^F$ is a normal subgroup of $\wt{\G}^F$ with abelian quotient $\wt{\G}^F/\G^F$. Notice, however, that $\wt{\G}^F$ might be larger than $\z(\wt{\G}^F)\G^F$.

Let $\L$ be an $F$-stable Levi subgroup of $\G$. Then, the group $\wt{\L}:=\z(\wt{\G})\L$ is an $F$-stable Levi subgroup of $\wt{\G}$. In fact,  if $\L=\c_\G(\S)$ with $\S:=\z^\circ(\L)$, then $\wt{\L}:=\L\z(\wt{\G})=\c_\G(\S)\z(\wt{\G})\leq \c_{\wt{\G}}(\S)=\c_{\G\z(\wt{\G})}(\S)\leq \c_{\G}(\S)\z(\wt{\G})=\L\z(\wt{\G})=\wt{\L}$. Then, it is clear that $\L=\wt{\L}\cap \G$ and therefore $\n_\G(\L)=\n_\G(\S)$ and $\n_{\wt{\G}}(\wt{\L})=\n_{\wt{\G}}(\L)=\n_{\wt{\G}}(\S)$. In addition, as $\z(\wt{\G})$ is contained in $\wt{\L}$, observe that $\wt{\G}=\wt{\L}\G$ which implies $\wt{\G}/\G\simeq \wt{\L}/\L$. Similarly, we have $\wt{\G}^F=\wt{\L}^F\G^F$ and $\wt{\G}^F/\G^F\simeq \n_{\wt{\G}}(\L)^F/\n_\G(\L)^F\simeq\wt{\L}^F/\L^F$. Observe that, since $\wt{\L}$ has connected centre by \cite[Lemma 13.14]{Dig-Mic91} and $[\wt{\L},\wt{\L}]=[\L\z(\wt{\G}),\L\z(\wt{\G})]=[\L,\L]$, the map $i\mid_{\L}:\L\to\wt{\L}$ is a regular embedding.   

Next, consider pairs $(\G^*,F^*)$ and $(\wt{\G}^*,F^*)$ dual to $(\G,F)$ and $(\wt{\G},F)$ respectively. The map $i:\G\to \wt{\G}$ induces a surjective morphism $i^*:\wt{\G}^*\to \G^*$ such that $\ker(i^*)$ is a connected subgroup of $\z(\wt{\G}^*)$ (see \cite[Section 15.1]{Cab-Eng04}). When $\G$ is simply connected, we have $\ker(i^*)=\z(\wt{\G}^*)$: observe that $\z(\G^*)$ is trivial since $\G^*$ is adjoint and therefore, using the isomorphism $\wt{\G}^*/\ker(i^*)\simeq \G^*$, we deduce that $\z(\wt{\G}^*)\leq \ker(i^*)$. As shown in \cite[(15.2)]{Cab-Eng04}, there exists an isomorphism
\begin{align}
\label{central kernel and linear characters}
\ker(i^*)^F&\to \irr\left(\wt{\G}^F/\G^F\right)
\\
z&\mapsto \wh{z}_{\wt{\G}}\nonumber
\end{align}
If $\L$ is an $F$-stable Levi subgroup of $\G$, noticing that $\ker(i^*)\leq \z(\wt{\G}^*)\leq \wt{\L}^*$, it follows that $\ker(i^*)=\ker(i^*\mid_{\wt{\L}^*})$. As before we obtain a map $\ker(i^*\mid_{\wt{\L}^*})^F\to\irr(\wt{\L}^F/\L^F)$, $z\mapsto \wh{z}_{\wt{\L}}$ which coincides with the restriction of the map defined above, i.e. $\wh{z}_{\wt{\L}}=(\wh{z}_{\wt{\G}})_{\wt{\L}^F}$. If no confusion arises, we denote $\mathcal{K}:=\ker(i^*)^F=\ker(i^*\mid_{\wt{\L}^*})^F$ and obtain bijections
\begin{align*}
\mathcal{K}&\to \irr\left(\wt{\L}^F/\L^F\right)
\\
z&\mapsto \wh{z}_{\wt{\L}}
\end{align*}
for every $F$-stable Levi subgroup $\L\leq \G$.

To conclude this section, we define an action of the group $\mathcal{K}$ on the set of irreducible characters.

\begin{defin}
\label{def:Action by linear characters}
Let $\mathcal{K}$ be as defined above. For $z\in \mathcal{K}$ and $\chi\in\irr(\wt{\G}^F)$, let
\[\chi^z:=\chi\cdot\wh{z}_{\wt{\G}},\]
where $\wh{z}_{\wt{\G}}\in\irr(\wt{\G}^F/\G^F)$ corresponds to $z$ via the isomorphism \eqref{central kernel and linear characters}. Similarly, for an $F$-stable Levi subgroup $\L$ of $\G$, the group $\mathcal{K}$ acts on $\irr(\wt{\L}^F)$. Moreover, noticing that $\wt{\G}^F/\G^F\simeq \n_{\wt{\G}}(\L)^F/\n_\G(\L)^F$, we deduce that $z\in\mathcal{K}$ also acts on the characters $\psi\in\irr(\n_{\wt{\G}}(\L)^F)$ via
\[\psi^z:=\psi\cdot\wh{z}_{\n_{\wt{\G}}(\L)},\]
where $\wh{z}_{\n_{\wt{\G}}(\L)}$ denotes the restriction of $\wh{z}_{\wt{\G}}$ to $\n_{\wt{\G}}(\L)^F$. In the same way, we can define an action of $\mathcal{K}$ on $\irr(\wt{\K}^F)$ and on $\irr(\n_{\wt{\K}}(\L)^F)$ for every $F$-stable Levi subgroups $\L$ and $\K$ of $\G$ satisfying $\L\leq \K$.
\end{defin}

\subsection{Automorphisms}
\label{sec:Automorphisms}

For every bijective morphism of algebraic groups $\sigma:\G\to\G$ satisfying $\sigma\circ F=F\circ \sigma$, the restriction of $\sigma$ to $\G^F$, which by abuse of notation we denote again by $\sigma$, is an automorphism of the finite group $\G^F$. Let $\aut_\mathbb{F}(\G^F)$ be the set of automorphisms of $\G^F$ obtained in this way. As mentioned in \cite[Section 2.4]{Cab-Spa13}, a morphism $\sigma\in\aut_\mathbb{F}(\G^F)$ is determined by its restriction to $\G^F$ up to a power of $F$. In particular $\aut_\mathbb{F}(\G^F)$ acts on the set of $F$-stable closed connected subgroups $\H$ of $\G$ and the we can define the set $\aut_\mathbb{F}(\G^F)_\H$ whose elements are the restrictions to $\G^F$ of those bijective morphisms $\sigma$ considered above that stabilize $\H$. Observe that $\aut_\mathbb{F}(\G^F)=\aut(\G^F)$ whenever $\G$ is a simple algebraic group of simply connected type such that $\G^F/\z(\G^F)$ is a non-abelian simple group (see \cite[Section 1.15]{GLS} and the remarks in \cite[Section 2.4]{Cab-Spa13}).

Next, we consider the relation between the automorphisms of $\G^F$ and those of its dual $\G^{*F^*}$. Consider a pair $(\G^*,F^*)$ dual to $(\G,F)$. According to \cite[Section 2.4]{Cab-Spa13}, there exists an isomorphism
\[\aut_\mathbb{F}\left(\G^F\right)/{\rm Inn}\left(\G_{\rm ad}^F\right)\simeq \aut_\mathbb{F}\left(\G^{*F^*}\right)/{\rm Inn}\left(\G_{\rm ad}^{*F^*}\right).\]
If the coset of $\sigma$ corresponds to the coset of $\sigma^*$ via the above isomorphism, then we write $\sigma\sim \sigma^*$ (see \cite[Definition 2.1]{Cab-Spa13}).

\begin{lem}
\label{lem:Automorphisms, duality and Levi subgroups}
Let $\L\leq \K$ be $F$-stable Levi subgroups of $\G$ in duality with the Levi subgroups $\L^*\leq \K^*$ of $\G^*$. Then, for every $\sigma\in\aut_\mathbb{F}(\G^F)_{\L,\K}$ there exists $\sigma^*\in\aut_\mathbb{F}(\G^{*F^*})_{\L^*,\K^*}$ such that $\sigma\sim \sigma^*$.
\end{lem}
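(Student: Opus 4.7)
The plan is to exploit the fact that the relation $\sim$ is defined modulo inner automorphisms, together with the explicit description of the isomorphism $\aut_\mathbb{F}(\G^F)/{\rm Inn}(\G_{\rm ad}^F)\simeq \aut_\mathbb{F}(\G^{*F^*})/{\rm Inn}(\G_{\rm ad}^{*F^*})$ via duality of based root data. The key step is to produce, within the class of $\sigma$ modulo inner automorphisms, a representative that stabilizes a reference torus simultaneously with $\L$ and $\K$, since the duality construction can then be performed at the level of root data.

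First I would reduce to the case where the reference $F$-stable maximal torus $\T$ satisfies $\T\leq\L\leq\K$, by conjugating $\L$ and $\K$ simultaneously by an element of $\G^F$; the parallel conjugation of $\L^*$ and $\K^*$ on the dual side only alters a candidate $\sigma^*$ by an inner automorphism and is thus harmless for the class $[\sigma^*]$. Given $\sigma\in\aut_\mathbb{F}(\G^F)_{\L,\K}$ realized by a bijective morphism $\wt{\sigma}:\G\to\G$ commuting with $F$, the torus $\wt{\sigma}(\T)$ is an $F$-stable maximal torus of $\L$ which is $\L$-conjugate to $\T$. A Lang--Steinberg argument inside $\L$, using that $\wt{\sigma}\circ F=F\circ\wt{\sigma}$ so that $\T$ and $\wt{\sigma}(\T)$ define the same $F$-conjugacy class in $W_\L=\n_\L(\T)/\T$, produces $l\in\L^F$ with $\wt{\sigma}(\T)={}^{l}\T$. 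Replacing $\wt{\sigma}$ by ${\rm ad}(l^{-1})\circ\wt{\sigma}$, which differs from $\wt{\sigma}$ by an inner automorphism in ${\rm Inn}(\G_{\rm ad}^F)$, we obtain a representative of the same class that stabilizes $\T$, $\L$ and $\K$.

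Once $\T$ is preserved, $\wt{\sigma}$ induces an automorphism of the root datum $(X(\T),\Phi,X^\vee(\T),\Phi^\vee)$ that leaves invariant the subsystems $\Phi_\L\subseteq\Phi_\K\subseteq\Phi$ cutting out $\L$ and $\K$. Dualizing via the identification of the root datum of $\G^*$ with respect to $\T^*$ as the dual of that of $\G$ with respect to $\T$, we obtain an automorphism of the dual root datum preserving $\Phi_\L^\vee=\Phi_{\L^*}$ and $\Phi_\K^\vee=\Phi_{\K^*}$. This lifts to a bijective morphism $\wt{\sigma}^*:\G^*\to\G^*$ commuting with $F^*$ and stabilizing $\T^*$, $\L^*$ and $\K^*$; by the very construction of the isomorphism defining $\sim$, its restriction $\sigma^*:=\wt{\sigma}^*|_{\G^{*F^*}}$ satisfies $\sigma\sim\sigma^*$. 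The main obstacle is making the Lang--Steinberg step rigorous, since it is not automatic that $\T$ and $\wt{\sigma}(\T)$ are $\L^F$-conjugate; this has to be extracted carefully from the $F$-equivariance of $\wt{\sigma}$.
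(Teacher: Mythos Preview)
Your approach is essentially what the paper intends: its proof is merely a pointer to \cite[Proposition 2.2]{Cab-Spa13} for the single-Levi case together with the remark that ``a similar argument applies in the general case''. Your sketch \emph{is} that similar argument --- adjust $\wt\sigma$ by an inner automorphism coming from $\L^F$ (which automatically preserves $\K$ as well, since $\L\leq\K$) so that the reference torus $\T$ is stabilised, then dualize at the level of root data, observing that the subsystems $\Phi_\L\subseteq\Phi_\K$ are carried to $\Phi_{\L^*}\subseteq\Phi_{\K^*}$. The key economy over the single-Levi case is precisely that one conjugation inside $\L^F$ fixes both Levis at once.

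One caution about the obstacle you already flag: the claim that $\T$ and $\wt\sigma(\T)$ lie in the same $\L^F$-conjugacy class is \emph{not} an immediate consequence of $\wt\sigma\circ F=F\circ\wt\sigma$; a bijective endomorphism commuting with $F$ can permute the $F$-conjugacy classes in $W_\L$ nontrivially through its induced (possibly outer) action on $W_\L$. In \cite{Cab-Spa13} this is circumvented by first decomposing $\sigma$ as an inner automorphism times a standard field/graph automorphism $\phi$ stabilising a fixed maximally split pair $(\T_0,\B_0)$, and then using that the bijection between $\G^F$-classes of $F$-stable Levis and $\G^{*F^*}$-classes of their duals is $\phi\leftrightarrow\phi^*$-equivariant; a Frattini argument on the dual side then produces the required representative stabilising $\L^*$ (and, in your situation, $\K^*$). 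Your torus-centred formulation leads to the same place, but you should expect to import that structural decomposition of $\sigma$ rather than rely on a bare Lang--Steinberg step inside $\L$.
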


\begin{proof}
Notice that the groups $\aut_\mathbb{F}(\G^F)_{\L,\K}:=\aut_\mathbb{F}(\G^F)_\L\cap\aut_\mathbb{F}(\G^F)_\K$ and $\aut_\mathbb{F}(\G^{*F^*})_{\L^*,\K^*}:=\aut_\mathbb{F}(\G^{*F^*})_{\L^*}\cap\aut_\mathbb{F}(\G^{*F^*})_{\K^*}$ are well defined. If $\L=\K$ the result follows from \cite[Proposition 2.2]{Cab-Spa13} while a similar argument applies in the general case.
\end{proof}

Assume now that $\G$ is simple of simply connected type. Fix a maximally split torus $\T_0$ contained in an $F$-stable Borel subgroup $\B_0$ of $\G$. This choice corresponds to a set of simple roots $\Delta\subseteq \Phi:=\Phi(\G,\T_0)$. For every $\alpha\in\Phi$ consider a one-parameter subgroup $x_\alpha:\mathbb{G}_{\rm a}\to \G$. Then $\G$ is generated by the elements $x_\alpha(t)$, where $t\in\mathbb{G}_{\rm a}$ and $\alpha\in\pm\Delta$. Consider the \emph{field endomorphism} $F_0:\G\to \G$ given by $F_0(x_\alpha(t)):=x_{\alpha}(t^p)$ for every $t\in\mathbb{G}_{\rm a}$ and $\alpha\in\Phi$. Moreover, for every symmetry $\gamma$ of the Dynkin diagram of $\Delta$, we have a \emph{graph automorphism} $\gamma:\G\to \G$ given by $\gamma(x_\alpha(t)):=x_{\gamma(\alpha)}(t)$ for every $t\in\mathbb{G}_{\rm a}$ and $\alpha\in\pm\Delta$. Then, up to inner automorphisms of $\G$, any Frobenius endomorphism $F$ defining an $\mathbb{F}_q$-structure on $\G$ can be written as $F=F_0^m\gamma$, for some symmetry $\gamma$ and $m\in\mathbb{Z}$ with $q=p^m$ (see \cite[Theorem 22.5]{Mal-Tes}). One can construct a regular embedding $\G\leq\wt{\G}$ in such a way that the Frobenius endomorphism $F_0$ extends to an algebraic group endomorphism $F_0:\wt{\G}\to\wt{\G}$ defining an $\mathbb{F}_p$-structure on $\wt{\G}$. Moreover, every graph automorphism $\gamma$ can be extended to an algebraic group automorphism of $\wt{\G}$ commuting with $F_0$ (see \cite[Section 2B]{Mal-Spa16}). If we denote by $\mathcal{A}$ the group generated by $\gamma$ and $F_0$, then we can construct the semidirect product $\wt{\G}^F\rtimes \mathcal{A}$. Finally, we define the set of \emph{diagonal automorphisms} of $\G^F$ to be the set of those automorphisms induced by the action of $\wt{\G}^F$ on $\G^F$. If $\G^F/\z(\G^F)$ is a non-abelian simple group, then the group $\wt{\G}^F\rtimes \mathcal{A}$ acts on $\G^F$ and induces all the automorphisms of $\G^F$ (see, for instance, the proof of \cite[Proposition 3.4]{Spa12} and of \cite[Theorem 2.4]{Cab-Spa19}).

\subsection{Restrictions on primes}
\label{subsection:Good primes and Levi}

For the rest of this section we consider the following setting.

\begin{notation}
\label{notation}
Let $\G$ be a connected reductive linear algebraic group defined over an algebraic closure of a finite field of characteristic $p$ and $F:\G\to \G$ a Frobenius endomorphism defining an $\mathbb{F}_q$-structure on $\G$, for a power $q$ of $p$. Consider a prime $\ell$ different from $p$ and denote by $e$ the multiplicative order of $q$ modulo $\ell$ (modulo $4$ if $\ell=2$). All blocks are considered with respect to the prime $\ell$.
\end{notation}

Here we recall the definition of good primes and define the set $\Gamma(\G,F)$ (see also \cite[Notation 1.1]{Cab-Eng94}). First, recall that $\ell$ is a \emph{good prime} for $\G$ if it is good for each simple factor of $\G$, while the conditions for the simple factors are
\begin{align*}
{\rm \bf A}_n&: \text{every prime is good}
\\
{\rm \bf B}_n, {\rm \bf C}_n, {\rm \bf D}_n&: \ell\neq 2
\\
{\rm \bf G}_2, {\rm \bf F}_4, {\rm \bf E}_6, {\rm \bf E}_7&: \ell\neq 2,3
\\
{\rm \bf E}_8&: \ell\neq 2,3,5.
\end{align*}
We say that $\ell$ is a \emph{bad prime} for $\G$ if it is not a good prime. Then, we denote by $\gamma(\G,F)$ the set of primes $\ell$ such that: $\ell$ is odd, $\ell\neq p$, $\ell$ is good for $\G$ and $\ell$ doesn't divide $|\z(\G)^F:\z^\circ(\G)^F|$. Let $(\G^*,F^*)$ be in duality with $(\G,F)$ and set $\Gamma(\G,F):=(\gamma(\G,F)\cap \gamma(\G^*,F^*))\setminus\{3\}$ if $\G_{\rm ad}^F$ has a component of type $^3\mathbf{D}_4(q^m)$ and $\Gamma(\G,F):=\gamma(\G,F)\cap \gamma(\G^*,F^*)$ otherwise.

\subsection{$e$-Harish-Chandra theory}
\label{subsec:e-HC theory}

Let $\G$, $F$, $q$, $\ell$ and $e$ be as in Notation \ref{notation} and consider an $F$-stable Levi complement of a (not necessarily $F$-stable) parabolic subgroup $\P$ of $\G$. Deligne--Lusztig \cite{Del-Lus76} and Lusztig \cite{Lus76} defined two $\mathbb{Z}$-linear maps
\[\R_{\L\leq \P}^\G:\mathbb{Z}\irr\left(\L^F\right)\to\mathbb{Z}\irr\left(\G^F\right) \hspace{15pt}\text{and}\hspace{15pt} {^*\R}_{\L\leq \P}^\G:\mathbb{Z}\irr\left(\G^F\right)\to\mathbb{Z}\irr\left(\L^F\right)\]
called Deligne--Lusztig induction and restriction respectively. It is conjectured that $\R_{\L\leq \P}^\G$ and $^*\R_{\L\leq \P}^\G$ do not depend on the choice of $\P$. This would, for instance, follow by the Mackey formula which has been proved whenever $\G^F$ does not have components of type $^2\mathbf{E}_6(2)$, $\mathbf{E}_7(2)$ or $\mathbf{E}_8(2)$ (see \cite{Bon-Mic10}). In what follows, we just write $\R_\L^\G$ and $^*\R_\L^\G$ whenever the independence on the choice of $\P$ is known.

An $F$-stable torus $\T$ of $\G$ is called a $\Phi_e$-torus if its order polynomial is of the form $P_{(\T,F)}=\Phi_e^{n}$ for some non-negative integer $n$ and where $\Phi_e$ denotes the $e$-th cyclotomic polynomial (see \cite[Definition 13.3]{Cab-Eng04}). The centralisers of $\Phi_e$-tori are called $e$-split Levi subgroups. Then $(\L,\lambda)$ is an $e$-cuspidal pair of $(\G,F)$ (or simply of $\G$ when no confusion arises) if $\L$ is an $e$-split Levi subgroup of $\G$ and $\lambda\in\irr(\L^F)$ satisfies $^*\R_{\M\leq \Q}^\L(\lambda)=0$ for every $e$-split Levi subgroup $\M<\L$ and every parabolic subgroup $\Q$ of $\L$ containing $\M$ as Levi complement. An $e$-cuspidal pair $(\L,\lambda)$ is $(e,\ell')$-cuspidal if $\lambda$ lies in a Lusztig series associated with an $\ell$-regular semisimple element of the dual group $\L^{*F^*}$. To any $e$-cuspidal pair $(\L,\lambda)$ of $\G$ we associate the $e$-Harish-Chandra series $\E(\G^F,(\L,\lambda))$ consisting of the irreducible constituents of the virtual characters $\R_{\L\leq \P}^\G(\lambda)$ for every parabolic subgroup $\P$ of $\G$ containing $\L$ as a Levi subgroup.

Using Deligne--Lusztig induction, one can define a partial order relation on the set of $e$-pairs of $\G$. If $(\L,\lambda)$ and $(\M,\mu)$ are $e$-pairs of $\G$, then we write $(\L,\lambda)\leq_e(\M,\mu)$ if $\L\leq \M$ and $\mu$ is an irreducible constituent of $\R_{\L\leq \Q}^\M$ for some parabolic subgroup $\Q$ of $\M$ having $\L$ as a Levi complement. Then, $\ll_e$ denotes the transitive closure of $\leq_e$. It is conjectured that $\leq_e$ is transitive and hence coincides with $\ll_e$ (see \cite[Notation 1.11]{Cab-Eng99} and \cite[Proposition 3.6 and Proposition 4.10]{Ros-Generalized_HC_theory_for_Dade}).

Recall that a Brauer--Lusztig block is a non-empty set of characters of $\G^F$ of the form $\E(\G^F,B,[s]):=\E(\G^F,[s])\cap \irr(B)$, where $\E(\G^F,[s])$ denotes the rational Lustig series associated to the semisimple element $s\in\G^{*F^*}$ and $B$ is an $\ell$-block of $\G^F$. In \cite[Theorem A]{Ros-Generalized_HC_theory_for_Dade} the author gives a description of the Brauer--Lusztig blocks in terms of $e$-Harish-Chandra series under suitable assumptions. More precisely, we assume the following hypothesis.

\begin{hyp}
\label{hyp:Brauer--Lusztig blocks}
Let $\G$, $F:\G\to \G$, $q$, $\ell$ and $e$ be as in Notation \ref{notation}. Assume that:
\begin{enumerate}
\item $\ell\in\Gamma(\G,F)$ with $\ell\geq 5$ and the Mackey formula hold for $(\G,F)$;
\item either $\z((\G^*)_{\rm sc}^{F^*})_\ell=1$ or $\ell\in\Gamma((\G^*)_{\rm ad},F)$; and
\item for every $e$-split Levi subgroup $\K$ of $\G$, we have
\[\left\lbrace\kappa\in\irr\left(\K^F\right)\enspace\middle|\enspace(\L,\lambda)\ll_e(\K,\kappa)\right\rbrace=\irr\left(\R_\L^\K(\lambda)\right)\]
for every $(e,\ell')$-cuspidal pair $(\L,\lambda)$ of $\K$.
\end{enumerate}
\end{hyp}

Under Hypothesis \ref{hyp:Brauer--Lusztig blocks}, \cite[Theorem A]{Ros-Generalized_HC_theory_for_Dade} shows that for every $e$-cuspidal pair $(\L,\lambda)$ of $\G$ we have
\[\E\left(\G^F,(\L,\lambda)\right)\subseteq \E\left(\G^F,B,[s]\right)\]
where $s$ is a semisimple element of $\L^{*F^*}$ such that $\lambda\in\E(\L^F,[s])$, $B=\bl(\lambda)^{\G^F}$ and $\E(\G^F,B,[s])$ is the associated Brauer--Lusztig block (see \cite[Definition 4.13]{Ros-Generalized_HC_theory_for_Dade}). Inspired by this result, we introduce the following definition. 

\begin{defin}
\label{def:e-BL-cuspidality}
An \emph{$e$-Brauer--Lusztig-cuspidal pair} of $\G$ is an $e$-cuspidal pair $(\L,\lambda)$ of $\G$ such that
\[\E\left(\G^F,(\L,\lambda)\right)=\E\left(\G^F,B,[s]\right)\]
for some semisimple element $s$ of $\G^{*F^*}$ and some $\ell$-block $B$ of $\G^F$.
\end{defin}

By \cite[Theorem A]{Ros-Generalized_HC_theory_for_Dade} and \cite[Theorem 4.1]{Cab-Eng99} it follows that every $e$-cuspidal pair $(\L,\lambda)$ such that $\lambda$ lies in a Lusztig series associated to an $\ell$-regular semisimple element is $e$-Brauer--Lusztig-cuspidal. It is natural to expect that all $e$-cuspidal pairs are $e$-Brauer--Lusztig-cuspidal under Hypothesis \ref{hyp:Brauer--Lusztig blocks}. This would follow by certain block theoretic properties of the Jordan decomposition of characters. We refer the reader to the discussion following \cite[Theorem 4.15]{Ros-Generalized_HC_theory_for_Dade}.

We conclude this section with a remark on the validity of Hypothesis \ref{hyp:Brauer--Lusztig blocks}. Observe that the Mackey formula and Hypothesis \ref{hyp:Brauer--Lusztig blocks} (iii) are expected to hold for any connected reductive group.

\begin{rmk}
\label{rmk:Brauer-Lusztig for simple simply connected}
Let $\G$ be simple of simply connected type such that $\G^F\neq {^2 \mathbf{E}}_6(2)$, $\mathbf{E}_7(2)$, $\mathbf{E}_8(2)$ and consider $\ell\in\Gamma(\G,F)$ with $\ell\geq 5$. Under these restrictions, Hypothesis \ref{hyp:Brauer--Lusztig blocks} is satisfied. In fact, in this case, the Mackey formula holds by \cite{Bon-Mic10} while Hypothesis \ref{hyp:Brauer--Lusztig blocks} (iii) holds by \cite[Corollary 3.7]{Ros-Generalized_HC_theory_for_Dade}. Moreover, since $\G$ is simple and simply connected, our assumption on $\ell$ shows that $\ell\in\Gamma((\G^*)_{\rm ad},F)$ (see \cite[Table 13.11]{Cab-Eng04}).
\end{rmk}

\subsection{A non-blockwise version of Condition \ref{cond:Main iEBC}}

By replacing $\G^F$-block isomorphisms of character triples with $\G^F$-central isomorphisms of character triples (see \cite[Definition 3.3.4]{Ros-Thesis}) a non-blockwise version of Condition \ref{cond:Main iEBC} can be introduced (see \cite[Condition 7.13]{Ros-Generalized_HC_theory_for_Dade}).

\begin{cond}
\label{cond:cEBC}
Let $\G$, $F:\G\to \G$, $q$, $\ell$ and $e$ be as in Notation \ref{notation} and consider an $e$-cuspidal pair $(\L,\lambda)$ of $\G$. Then there exists a defect preserving $\aut_\mathbb{F}(\G^F)_{(\L,\lambda)}$-equivariant bijection
\[\Omega^\G_{(\L,\lambda)}:\E\left(\G^F,(\L,\lambda)\right)\to\irr\left(\n_\G(\L)^F\enspace\middle|\enspace \lambda\right)\]
such that
\[\left(X_\vartheta,\G^F,\vartheta\right)\isoc{\G^F}\left(\n_{X_\vartheta}(\L),\n_{\G^F}(\L),\Omega^\G_{(\L,\lambda)}(\vartheta)\right)\]
for every $\vartheta\in\E\left(\G^F,(\L,\lambda)\right)$ and where $X:=\G^F\rtimes \aut_\mathbb{F}(\G^F)$.
\end{cond}

We point out that it is much easier to verify Condition \ref{cond:cEBC} than it is to verify Condition \ref{cond:Main iEBC}. As a hint to this fact, the reader should compare Theorem \ref{thm:cEBC follows from extensions} and Theorem \ref{thm:iEBC follows from extensions}.

Moreover, \cite[Theorem 7.14]{Ros-Generalized_HC_theory_for_Dade} shows that Condition \ref{cond:cEBC} implies a non-blockwise version of the Character Triple Conjecture (see \cite[Definition 3.5.5]{Ros-Thesis}). In analogy with \cite[Theorem 1.3]{Spa17}, we expect that the non-blockwise version of the Character Triple Conjecture could be used as an inductive condition to obtain a reduction theorem for the non-blockwise version of Dade's Conjecture. We refer the reader to \cite[Section 7.1]{Ros-Generalized_HC_theory_for_Dade} for a more detailed discussion on this topic. On the way to prove our results for Condition \ref{cond:Main iEBC}, we also obtain similar statements for the simpler Condition \ref{cond:cEBC}.

\section{Parametrization of non-unipotent $e$-Harish-Chandra series}
\label{sec:Constructing bijections}

In this section we start by proving Theorem \ref{thm:Main e-Harish-Chandra parametrization for connected center} and hence extend \cite[Theorem 3.2]{Bro-Mal-Mic93} to non-unipotent $e$-cuspidal pairs of reductive groups with connected centre and type different from $\mathbf{E}_6$, $\mathbf{E}_7$ or $\mathbf{E}_8$. Then, assuming maximal extendibility for $e$-cuspidal characters of $e$-split Levi subgroups, we prove Theorem \ref{thm:Bijection for connected center} and obtain certain bijections that are part of the requirements of the criteria we prove in Section \ref{sec:Criteria} (see Assumption \ref{ass:Assumption for the criterion, central} (ii) and Assumption \ref{ass:Assumption for the criterion} (ii)).

\subsection{$e$-Harish-Chandra theory for groups with connected centre}
\label{sec:e-Harisch_chandra for connected center}

In what follows we make use of the fact that, under suitable assumptions, there exists a Jordan decomposition that commutes with Deligne--Lusztig induction (see \cite[Theorem 4.7.2 and Theorem 4.7.5]{Gec-Mal20}). More precisely, we consider the following hypothesis.

\begin{hyp}
\label{hyp:e-Harish-Chandra theory, no regular embedding}
Let $\G$, $F:\G\to \G$, $q$, $\ell$ and $e$ be as in Notation \ref{notation} and suppose that $\G$ has connected centre and that $[\G,\G]$ is simple not of type $\mathbf{E}_6$, $\mathbf{E}_7$ or $\mathbf{E}_8$.
\end{hyp}

\begin{theo}
\label{thm:Jordan decomposition for connected center case}
Assume Hypothesis \ref{hyp:e-Harish-Chandra theory, no regular embedding}. Then there exists a collection of bijections
\[J_{\L,s}:\E\left(\L^F,[s]\right)\to \E\left(\c_{\L^*}(s)^{F^*},[1]\right)\]
for every $F$-stable Levi subgroup $\L$ of $\G$ and every semisimple element $s\in \L^{*F^*}$, such that the following properties are satisfied:
\begin{enumerate}
\item $J_{\L,s}\left(\lambda\right)^{\sigma^*}=J_{\L,\sigma^*(s)}\left(\lambda^\sigma\right)$ for every $\lambda\in\E(\L^F,[s])$, every $\sigma\in\aut_\mathbb{F}(\G^F)_{\L}$ and $\sigma^*\in\aut_\mathbb{F}(\G^{*F^*})_{\L^*}$ with $\sigma\sim\sigma^*$ (see \cite[Proposition 2.2]{Cab-Spa13});
\item $J_{\K,s}\circ \R_{\L}^{\K}=\R^{\c_{\K^*}(s)}_{\c_{\L^*}(s)}\circ J_{\L,s}$ for every $F$-stable Levi subgroup $\K$ of $\G$ containing $\L$;
\item $\lambda(1)=\left|\L^{*F^*}:\c_{\L^*}(s)^{F^*}\right|_{p'}\cdot J_{\L,s}(\lambda)(1)$ for every $\lambda\in\E(\L^F,[s])$; and
\item if $z\in\z(\L^{*F^*})$ corresponds to the character $\wh{z}_{\L}\in\irr(\L^F)$ via \cite[(8.19)]{Cab-Eng04}, then
\[J_{\L,s}\left(\lambda\right)=J_{\L,sz}\left(\lambda\cdot\wh{z}_{\L}\right)\]
for every $\lambda\in\E(\L^F,[s])$, or equivalently
\[J^{-1}_{\L,s}\left(\nu\right)\cdot\wh{z}_{\L}=J^{-1}_{\L,sz}\left(\nu\right)\]
for every $\nu\in\E(\c_{\L^*}(s)^{F^*},[1])=\E(\c_{\L^*}(sz)^{F^*},[1])$
\end{enumerate}
\end{theo}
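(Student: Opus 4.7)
Under Hypothesis \ref{hyp:e-Harish-Chandra theory, no regular embedding} every $F$-stable Levi subgroup $\L$ of $\G$ has connected centre, so for each semisimple $s\in\L^{*F^*}$ a Jordan decomposition bijection $J_{\L,s}\colon\E(\L^F,[s])\to\E(\c_{\L^*}(s)^{F^*},[1])$ exists by the work of Lusztig and Digne--Michel. The content of the theorem is that one can choose a single collection $\{J_{\L,s}\}$ satisfying (i)--(iv) simultaneously, and the plan is to specialize the Jordan decomposition constructed in \cite[Theorems 4.7.2 and 4.7.5]{Gec-Mal20}.

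The main input, and the main obstacle, is property (ii). The Geck--Malle construction produces a family of Jordan decomposition maps that commute with Deligne--Lusztig induction from any $F$-stable Levi subgroup, provided two ingredients are available: the Mackey formula for $(\G,F)$, and the absence of simple factors of type $\mathbf{E}_6$, $\mathbf{E}_7$ or $\mathbf{E}_8$ in $[\G,\G]$. Both are in force here: the exceptional types are excluded by Hypothesis \ref{hyp:e-Harish-Chandra theory, no regular embedding}, and the Mackey formula is then known by \cite{Bon-Mic10}. The degree formula in (iii) is part of the standard properties of Jordan decomposition and can be read off from the construction (compare \cite[Section 15]{Cab-Eng04}).

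For (i) I would argue by uniqueness. Given $\sigma\in\aut_\mathbb{F}(\G^F)_{\L}$ with dual $\sigma^*\in\aut_\mathbb{F}(\G^{*F^*})_{\L^*}$ provided by \cite[Proposition 2.2]{Cab-Spa13}, the automorphism $\sigma^*$ preserves $\L^*$ and sends $\c_{\L^*}(s)$ onto $\c_{\L^*}(\sigma^*(s))$. Both $\lambda\mapsto J_{\L,s}(\lambda)^{\sigma^*}$ and $\lambda\mapsto J_{\L,\sigma^*(s)}(\lambda^\sigma)$ are then bijections $\E(\L^F,[s])\to\E(\c_{\L^*}(\sigma^*(s))^{F^*},[1])$, and transporting (ii) through $\sigma$ shows that each of them commutes with Deligne--Lusztig induction from tori. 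The uniqueness statement built into \cite[Theorem 4.7.5]{Gec-Mal20}, or alternatively an appeal to an $\aut_\mathbb{F}(\G^F)_\L$-equivariant Jordan decomposition as in \cite{Cab-Spa13}, then forces the two maps to agree.

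Property (iv) is treated in the same spirit. For $z\in\z(\L^{*F^*})$ we have $\c_{\L^*}(sz)=\c_{\L^*}(s)$, so both $J_{\L,s}$ and $\lambda\mapsto J_{\L,sz}(\lambda\cdot\wh{z}_{\L})$ are bijections $\E(\L^F,[s])\to\E(\c_{\L^*}(s)^{F^*},[1])$. The projection formula $\R_\L^{\K}(\lambda\cdot\wh{z}_{\L})=\R_\L^{\K}(\lambda)\cdot\wh{z}_{\K}$ together with \cite[(8.19)]{Cab-Eng04} shows that the second map, like the first, commutes with Deligne--Lusztig induction, and the same uniqueness argument yields the identity in (iv); the equivalent reformulation in terms of $J^{-1}_{\L,s}$ is then immediate. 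The delicate point throughout is that (i), (iii), (iv) must hold for one and the same coherent collection $\{J_{\L,s}\}_{\L,s}$, which is exactly what the Geck--Malle construction under Hypothesis \ref{hyp:e-Harish-Chandra theory, no regular embedding} provides.
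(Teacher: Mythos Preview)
Your proposal is correct and follows essentially the same approach as the paper: both assemble the theorem from the Digne--Michel Jordan decomposition together with the results in \cite{Gec-Mal20} and \cite{Cab-Spa13}. The paper is simply terser, citing \cite[Theorem 7.1]{Dig-Mic90} for the construction and for (iv), \cite[Theorem 3.1]{Cab-Spa13} for (i), \cite[Theorem 4.7.2 and 4.7.5]{Gec-Mal20} for (ii), and \cite[(2.1)]{Mal07} for (iii), whereas you spell out the uniqueness mechanism that underlies those citations; in particular your derivation of (i) and (iv) via uniqueness is exactly the argument encoded in the Digne--Michel characterization and in \cite{Cab-Spa13}, so there is no substantive difference.
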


\begin{proof}
The required bijections are constructed in \cite[Theorem 7.1]{Dig-Mic90} and satisfy (iv) by \cite[Theorem 7.1 (iii)]{Dig-Mic90}. The properties (i) and (ii) follows from \cite[Theorem 3.1]{Cab-Spa13} and \cite[Theorem 4.7.2 and Theorem 4.7.5]{Gec-Mal20} respectively. For (iii) see, for instance, the description given in \cite[(2.1)]{Mal07}.
\end{proof}

As a consequence of the equivariance of the above Jordan decomposition, we obtain an isomorphism of relative Weyl groups. This result should be compared with \cite[Corollary 3.3]{Cab-Spa13}

\begin{cor}
\label{cor:Frattini isomorphism for connected center}
Assume Hypothesis \ref{hyp:e-Harish-Chandra theory, no regular embedding}, let $\L\leq\K$ be $F$-stable Levi subgroups of $\G$. Then, there exists a collection of isomorphisms
\[i^{\K}_{\L,\lambda}:W_{\K}\left(\L,\lambda\right)^F\to W_{\c_{\K^*}(s)}\left(\c_{\L^*}(s),J_{\L,s}\left(\lambda\right)\right)^{F^*}\]
for every $\lambda\in\E(\L^F,[s])$, such that
\[\sigma^*\circ i^{\K}_{\L,\lambda}=i^{\K}_{\L,\lambda^\sigma}\circ \sigma\]
for every $\sigma\in\aut_\mathbb{F}(\G^F)_{\K,\L}$ and $\sigma^*\in\aut_\mathbb{F}(\G^{*F^*})_{\K^*,\L^*}$ with $\sigma\sim\sigma^*$ (see Lemma \ref{lem:Automorphisms, duality and Levi subgroups}). Moreover, if $z\in \z(\K^{*F^*})$ corresponds to the character $\wh{z}_{\L}\in\irr(\L^F)$ via \cite[(8.19)]{Cab-Eng04}, then
\[W_{\K}\left(\L,\lambda\right)^F=W_{\K}\left(\L,\lambda\cdot\wh{z}_{\L}\right)^F,\]
\[W_{\c_{\K^*}(s)}\left(\c_{\L^*}(s),J_{\L,s}\left(\lambda\right)\right)^{F^*}=W_{\c_{\K^*}(sz)}\left(\c_{\L^*}(sz),J_{\L,sz}\left(\lambda\cdot\wh{z}_{\L}\right)\right)^{F^*}\]
and
\[i^{\K}_{\L,\lambda}=i^{\K}_{\L,\lambda\cdot\wh{z}_{\L}}\]
\end{cor}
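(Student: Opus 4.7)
The construction proceeds in three steps, leveraging the classical duality of relative Weyl groups together with the equivariance properties of the Jordan decomposition established in Theorem \ref{thm:Jordan decomposition for connected center case}.

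First, I would set up the underlying duality. Since $\G$ has connected centre, every $F$-stable Levi subgroup of $\G$ inherits a connected centre, so centralizers $\c_{\K^*}(s)$ and $\c_{\L^*}(s)$ of semisimple elements are connected reductive, and $\c_{\L^*}(s)$ is a Levi subgroup of $\c_{\K^*}(s)$. Classical duality (as in \cite{Dig-Mic91}, compare \cite[Corollary 3.3]{Cab-Spa13}) yields a natural isomorphism $W_\K(\L)^F \simeq W_{\K^*}(\L^*)^{F^*}$; moreover, inner conjugation by an element $n \in \n_\K(\L)^F$ corresponds under this isomorphism, in the sense of the relation $\sim$, to inner conjugation by some $n^* \in \n_{\K^*}(\L^*)^{F^*}$, where $n^*$ is well-defined modulo $\L^{*F^*}$.

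Second, I would restrict the above isomorphism to stabilizers and pass through the Jordan decomposition. Fix $\lambda \in \E(\L^F,[s])$ and $n\L^F \in W_\K(\L,\lambda)^F$. By property (i) of Theorem \ref{thm:Jordan decomposition for connected center case} applied to the inner automorphism $c_n$ and its dual partner $c_{n^*}$, the condition $\lambda^n = \lambda$ is equivalent to the conjunction of: (a) $n^*$ stabilizes the $\L^{*F^*}$-conjugacy class of $s$, and (b) the corresponding element of $W_{\c_{\K^*}(s)}(\c_{\L^*}(s))^{F^*}$ fixes $J_{\L,s}(\lambda)$. Writing (a) as $n^* = \ell m$ with $\ell \in \L^{*F^*}$ and $m \in \n_{\c_{\K^*}(s)}(\c_{\L^*}(s))^{F^*}$ (using a Lang–Steinberg argument to ensure $F^*$-stability), I would then define
\[
i^{\K}_{\L,\lambda}(\overline n) := \overline m \in W_{\c_{\K^*}(s)}(\c_{\L^*}(s),J_{\L,s}(\lambda))^{F^*}
\]
and verify that this is independent of the choice of $\ell$ (up to $\c_{\L^*}(s)^{F^*}$) and is a group isomorphism onto the target.

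Third, the equivariance $\sigma^* \circ i^{\K}_{\L,\lambda} = i^{\K}_{\L,\lambda^\sigma} \circ \sigma$ follows from property (i) of Theorem \ref{thm:Jordan decomposition for connected center case} applied to a general pair $\sigma \sim \sigma^*$ stabilizing the data $(\K,\L)$ and $(\K^*,\L^*)$, which is available by Lemma \ref{lem:Automorphisms, duality and Levi subgroups}. For the $z$-compatibility, I would observe that $\wh z_\L$ equals the restriction of $\wh z_\K$ to $\L^F$, hence is $\n_\K(\L)^F$-invariant, giving $W_\K(\L,\lambda\cdot\wh z_\L)^F = W_\K(\L,\lambda)^F$. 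On the dual side, the connected-centre hypothesis gives $\z(\K^{*F^*}) = \z(\K^*)^{F^*} \leq \z(\L^*)^{F^*}$, so $z$ is truly central in $\L^*$ and $\K^*$ as algebraic groups; consequently $\c_{\L^*}(sz) = \c_{\L^*}(s)$ and $\c_{\K^*}(sz) = \c_{\K^*}(s)$. Property (iv) of Theorem \ref{thm:Jordan decomposition for connected center case} then gives $J_{\L,sz}(\lambda\cdot\wh z_\L) = J_{\L,s}(\lambda)$, so the target Weyl groups coincide and the two definitions produce the same map.

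The main technical obstacle lies in Step 1: making precise that the Frattini-type isomorphism $W_\K(\L)^F \simeq W_{\K^*}(\L^*)^{F^*}$ intertwines conjugation actions in a way compatible with the relation $\sim$, which a priori is defined only modulo inner automorphisms coming from the adjoint groups $\G_{\rm ad}^F$ and $\G_{\rm ad}^{*F^*}$. One must lift these equivalences to honest inner automorphisms of $\G^F$ and $\G^{*F^*}$ by elements of the respective normalizers; the connected-centre hypothesis is what makes this lifting clean, as it ensures $\z(\G^*)$ is well-behaved and that Lusztig series partition $\irr(\L^F)$ cleanly under the Weyl group action. Once Step 1 is rigorous, Steps 2 and 3 reduce to formal bookkeeping using the properties (i) and (iv) of Theorem \ref{thm:Jordan decomposition for connected center case}.
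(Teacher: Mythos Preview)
Your proposal is correct and follows essentially the same route as the paper. The paper's own proof is extremely terse: it defers the construction and equivariance of $i^{\K}_{\L,\lambda}$ entirely to the proof of \cite[Corollary 3.3]{Cab-Spa13}, and obtains the $z$-compatibility directly from Theorem \ref{thm:Jordan decomposition for connected center case} (iv); your three steps are precisely an unpacking of what lies behind those two citations, with Step~2 reproducing the Cabanes--Sp\"ath argument and Step~3 spelling out how property~(iv) forces $\c_{\L^*}(sz)=\c_{\L^*}(s)$, $J_{\L,sz}(\lambda\cdot\wh{z}_\L)=J_{\L,s}(\lambda)$, and hence equality of the maps.
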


\begin{proof}
The first statement follows from the proof of \cite[Corollary 3.3]{Cab-Spa13}. The second statement follows from Theorem \ref{thm:Jordan decomposition for connected center case} (iv).
\end{proof}

Before proving Theorem \ref{thm:Main e-Harish-Chandra parametrization for connected center}, we state an equivariant version of \cite[Theorem 3.2]{Bro-Mal-Mic93}. The following statement is a slight improvement of \cite[Theorem 3.4]{Cab-Spa13}.

\begin{theo}
\label{thm:Broue-Malle-Michel for connected center}
Let $\H$ be a connected reductive group with a Frobenius endomorphism $F:\H\to\H$ defining an $\mathbb{F}_q$-structure on $\H$, $\ell$ a prime not dividing $q$ and $e$ the order of $q$ modulo $\ell$ (modulo $4$ if $\ell=2$). For any $e$-split Levi subgroup $\M$ of $\hspace{2pt}\H$ and $\mu\in\E(\M^F,[1])$ with $(\M,\mu)$ a unipotent $e$-cuspidal pair, there exists an $\aut_\mathbb{F}(\H^F)_{(\M,\mu)}$-equivariant bijection
\[I^{\H}_{(\M,\mu)}:\irr\left(W_\H(\M,\mu)^F\right)\to\E\left(\H^F,(\M,\mu)\right)\]
such that
\[I^{\H}_{(\M,\mu)}(\eta)(1)_\ell=\left|\H^F:\n_\H(\M,\mu)^F\right|_\ell\cdot\mu(1)_\ell\cdot \eta(1)_\ell\]
for every $\eta\in\irr(W_\H(\M,\mu)^F)$.
\end{theo}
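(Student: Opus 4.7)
The plan is to combine the original parametrization \cite[Theorem 3.2]{Bro-Mal-Mic93} with its equivariant refinement \cite[Theorem 3.4]{Cab-Spa13} and to read off the degree formula by passing to $\ell$-parts. I would start by invoking \cite[Theorem 3.2]{Bro-Mal-Mic93} to obtain a (not yet equivariant) bijection $I^\H_{(\M,\mu)}$ together with the well-known BMM degree identity
\[
I^\H_{(\M,\mu)}(\eta)(1)=\epsilon_\H\epsilon_\M\cdot\frac{|\H^F|_{p'}}{|\M^F|_{p'}\cdot|W_\H(\M,\mu)^F|}\cdot\mu(1)\cdot\eta(1),
\]
where $\epsilon_\H,\epsilon_\M\in\{\pm 1\}$ are the usual BMM signs. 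Since $\ell\neq p$ the $p'$-part and the $\ell$-part coincide on the index factors above, and using $|\n_\H(\M,\mu)^F|=|\M^F|\cdot|W_\H(\M,\mu)^F|$ (a consequence of $W_\H(\M,\mu)^F=\n_\H(\M,\mu)^F/\M^F$), taking $\ell$-parts of the displayed equality immediately yields
\[
I^\H_{(\M,\mu)}(\eta)(1)_\ell=|\H^F:\n_\H(\M,\mu)^F|_\ell\cdot\mu(1)_\ell\cdot\eta(1)_\ell,
\]
which is the second assertion of the theorem.

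Next I would upgrade the bijection to be $\aut_\mathbb{F}(\H^F)_{(\M,\mu)}$-equivariant by appealing to \cite[Theorem 3.4]{Cab-Spa13}. Any $\sigma\in\aut_\mathbb{F}(\H^F)_{(\M,\mu)}$ lifts to a bijective endomorphism of $\H$ commuting with $F$ and stabilizing $(\M,\mu)$; since Deligne--Lusztig induction is equivariant under such lifts, $\sigma$ permutes the series $\E(\H^F,(\M,\mu))$, and simultaneously acts on the quotient $W_\H(\M,\mu)^F$, hence on $\irr(W_\H(\M,\mu)^F)$. The Cabanes--Späth construction produces a bijection intertwining these two actions. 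The degree formula survives this refinement because the BMM constant depends only on $(\M,\mu)$ and not on $\eta$: the Cabanes--Späth procedure rearranges the BMM bijection only within fibres of equal degrees on each side, and any such rearrangement still sends $\eta$ to a character of degree $C(\M,\mu)\cdot\mu(1)\cdot\eta(1)$.

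The main obstacle, already handled in \cite[Theorem 3.4]{Cab-Spa13}, is the construction of extensions of $\mu$ to its inertia subgroup $\n_\H(\M)^F_\mu$ and of an associated system of irreducible constituents of $\R^\H_\M(\mu)$ in an $\aut_\mathbb{F}(\H^F)_{(\M,\mu)}$-stable way; the delicacy lies in controlling how the BMM parametrization depends on these auxiliary choices and in showing they can be made equivariantly. Since that technical work is available in \textit{loc.\ cit.}, the present statement follows by combining the Cabanes--Späth equivariant construction with the routine passage to $\ell$-parts of the BMM degree formula recorded above.
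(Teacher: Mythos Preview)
Your approach to the equivariance part essentially matches the paper's: the bijection comes from \cite[Theorem 3.4]{Cab-Spa13}. One subtlety the paper flags and you do not: as literally stated, \cite[Theorem 3.4]{Cab-Spa13} treats centralizers of Sylow $\Phi_e$-tori rather than arbitrary $e$-split Levi subgroups, and the extension to arbitrary $e$-split $\M$ uses the observation recorded in the proof of \cite[Proposition 5.5]{Bro-Spa20}.

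The degree argument, however, has a genuine gap. The displayed ``BMM degree identity''
\[
I^\H_{(\M,\mu)}(\eta)(1)=\epsilon_\H\epsilon_\M\cdot\frac{|\H^F|_{p'}}{|\M^F|_{p'}\cdot|W_\H(\M,\mu)^F|}\cdot\mu(1)\cdot\eta(1)
\]
is not what \cite[Theorem 3.2]{Bro-Mal-Mic93} provides. The actual BMM degree formula expresses $I^\H_{(\M,\mu)}(\eta)(1)$ via the generic degree (equivalently, the inverse Schur element) of $\eta$ in the associated cyclotomic Hecke algebra, and this is \emph{not} a scalar multiple of $\eta(1)$ in general: already for unipotent characters of $\GL_n(q)$ the degrees are genuine polynomials in $q$ depending on the partition, not constant multiples of the symmetric-group character degree. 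Consequently, ``taking $\ell$-parts'' of the true BMM formula does not immediately yield the stated identity; one must still show that the Schur element of $\eta$ evaluated at $q$ has $\ell$-part exactly $|W_\H(\M,\mu)^F|_\ell/\eta(1)_\ell$. This is a nontrivial fact about cyclotomic Hecke algebras and $\Phi_e$-theory, and it is precisely where the paper invokes \cite[Theorem 4.2]{Mal07} together with the argument of \cite[Lemma 5.3]{Bro-Spa20}.

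For the same reason, your claim that the degree formula ``survives the equivariant refinement because the BMM constant depends only on $(\M,\mu)$'' is based on the incorrect formula and does not stand on its own. What is actually true is that the Cabanes--Sp\"ath bijection is itself a bijection of BMM type, hence satisfies the Schur-element degree formula term by term; once Malle's $\ell$-part analysis is in hand, the desired identity then holds for the equivariant bijection as well.
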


\begin{proof}
This follows from the proof of \cite[Theorem 3.4]{Cab-Spa13} applied to arbitrary $e$-split Levi subgroups (see the comment in the proof of \cite[Proposition 5.5]{Bro-Spa20}). Regarding the statement on character degrees, see \cite[Theorem 4.2]{Mal07} and the argument used to prove \cite[Lemma 5.3]{Bro-Spa20}.
\end{proof}

Let $\K$ be an $F$-stable Levi subgroup of $\G$ and consider an $e$-cuspidal pair $(\L,\lambda)$ of $\K$. Let $s$ be a semisimple element of $\L^{*F^*}$ such that $\lambda\in\E(\L^F,[s])$. By \cite[Proposition 1.10]{Cab-Eng99}, the unipotent character $J_{\L,s}(\lambda)$ is $e$-cuspidal. Moreover, using the fact that $\L$ is an $e$-split Levi subgroup of $\K$, we conclude that $\c_{\L^*}(s)$ is an $e$-split Levi subgroup of $\c_{\K^*}(s)$. This shows that $(\c_{\L^*}(s),J_{\L,s}(\lambda))$ is a unipotent $e$-cuspidal pair of $\c_{\K^*}(s)$. Now, we can define the map  
\begin{equation}
\label{eq:e-Harisch-Chandra parametrization for connected center}
I^{\K}_{(\L,\lambda)}:\irr\left(W_{\K}\left(\L,\lambda\right)^F\right)\to\E\left(\K^F,\left(\L,\lambda\right)\right)
\end{equation}
given by
\[I_{(\L,\lambda)}^{\K}\left(\eta\right):=J_{\K,s}^{-1}\left(I_{(\c_{\L^*}(s),J_{\L,s}\left(\lambda\right))}^{\c_{\K^*}(s)}\left(\left(\eta\right)^{i^{\K}_{\L,\lambda}}\right)\right)\]
for every $\eta\in\irr(W_{\K}(\L,\lambda)^F)$ and where $\eta^{i^{\K}_{\L,\lambda}}\in\irr(W_{\c_{\K^*}(s)}(\c_{\L^*}(s),J_{\L,s}(\lambda))^{F^*})$ corresponds to $\eta$ via the isomorphism $i^{\K}_{\L,\lambda}$ of Corollary \ref{cor:Frattini isomorphism for connected center} and $I_{(\c_{\L^*}(s),J_{\L,s}\left(\lambda\right))}^{\c_{\K^*}(s)}$ is the map constructed in Theorem \ref{thm:Broue-Malle-Michel for connected center}.

\begin{lem}
\label{lem:e-Harisch-Chandra parametrization for connected center, equivariant}
Assume Hypothesis \ref{hyp:e-Harish-Chandra theory, no regular embedding}. Then the map $I^{\K}_{(\L,\lambda)}$ is an $\aut_\mathbb{F}(\G^F)_{\K,(\L,\lambda)}$-equivariant bijection.
\end{lem}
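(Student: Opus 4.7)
The map $I^{\K}_{(\L,\lambda)}$ is by construction a composition of three maps, so the plan is to check bijectivity and equivariance for each factor and then assemble them.

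For bijectivity, note that $i^{\K}_{\L,\lambda}$ is an isomorphism of groups by Corollary \ref{cor:Frattini isomorphism for connected center}, so it induces a bijection on irreducible characters. The middle map $I^{\c_{\K^*}(s)}_{(\c_{\L^*}(s),J_{\L,s}(\lambda))}$ is a bijection onto $\E(\c_{\K^*}(s)^{F^*},(\c_{\L^*}(s),J_{\L,s}(\lambda)))$ by Theorem \ref{thm:Broue-Malle-Michel for connected center}, since $(\c_{\L^*}(s),J_{\L,s}(\lambda))$ is a unipotent $e$-cuspidal pair of $\c_{\K^*}(s)$ by the remarks immediately preceding \eqref{eq:e-Harisch-Chandra parametrization for connected center}. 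Finally, $J_{\K,s}$ restricts to a bijection $\E(\K^F,(\L,\lambda))\to\E(\c_{\K^*}(s)^{F^*},(\c_{\L^*}(s),J_{\L,s}(\lambda)))$: this follows because $J_{\K,s}$ is a bijection on the full Lusztig series by Theorem \ref{thm:Jordan decomposition for connected center case} and, by Theorem \ref{thm:Jordan decomposition for connected center case}(ii), $J_{\K,s}$ sends irreducible constituents of $\R_\L^\K(\lambda)$ to irreducible constituents of $\R_{\c_{\L^*}(s)}^{\c_{\K^*}(s)}(J_{\L,s}(\lambda))$. Composing, $I^{\K}_{(\L,\lambda)}$ is a bijection onto $\E(\K^F,(\L,\lambda))$.

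For equivariance, fix $\sigma\in\aut_\mathbb{F}(\G^F)_{\K,(\L,\lambda)}$ and use Lemma \ref{lem:Automorphisms, duality and Levi subgroups} to pick $\sigma^*\in\aut_\mathbb{F}(\G^{*F^*})_{\K^*,\L^*}$ with $\sigma\sim\sigma^*$. Since $\sigma$ fixes $\lambda\in\E(\L^F,[s])$, the element $\sigma^*(s)$ lies in the same $\L^{*F^*}$-conjugacy class as $s$, so up to composing $\sigma^*$ with an inner automorphism coming from $\L^{*F^*}$ (which does not change its class modulo ${\rm Inn}(\G_{\rm ad}^{*F^*})$, and preserves $\K^*$ and $\L^*$), we may assume $\sigma^*(s)=s$. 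In particular, $\sigma^*$ stabilizes $\c_{\K^*}(s)$ and $\c_{\L^*}(s)$, and its restriction defines an element of $\aut_\mathbb{F}(\c_{\K^*}(s)^{F^*})$.

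With this choice, the three equivariance statements assemble as follows. By Theorem \ref{thm:Jordan decomposition for connected center case}(i), $J_{\L,s}(\lambda)^{\sigma^*}=J_{\L,\sigma^*(s)}(\lambda^\sigma)=J_{\L,s}(\lambda)$, so $\sigma^*$ fixes the unipotent $e$-cuspidal pair $(\c_{\L^*}(s),J_{\L,s}(\lambda))$ in $\c_{\K^*}(s)$, and the equivariance of Theorem \ref{thm:Broue-Malle-Michel for connected center} applies. Combining this with Corollary \ref{cor:Frattini isomorphism for connected center} (which gives $\sigma^*\circ i^{\K}_{\L,\lambda}=i^{\K}_{\L,\lambda}\circ\sigma$ since $\lambda^\sigma=\lambda$) and Theorem \ref{thm:Jordan decomposition for connected center case}(i) again (applied at the level of $\K$), a straightforward diagram chase yields
\[
I^{\K}_{(\L,\lambda)}(\eta^\sigma)
=J_{\K,s}^{-1}\!\left(I^{\c_{\K^*}(s)}_{(\c_{\L^*}(s),J_{\L,s}(\lambda))}\!\left((\eta^{i^{\K}_{\L,\lambda}})^{\sigma^*}\right)\right)
=I^{\K}_{(\L,\lambda)}(\eta)^\sigma
\]
for every $\eta\in\irr(W_\K(\L,\lambda)^F)$.

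The main technical obstacle is the modification of $\sigma^*$ by an inner automorphism from $\L^{*F^*}$ to ensure $\sigma^*(s)=s$ while remaining in the given $\sim$-class and preserving the Levi subgroups $\K^*$ and $\L^*$; this is needed because $s$ is only canonical up to $\L^{*F^*}$-conjugacy, whereas all three equivariance statements above need $\sigma^*$ to fix $s$ on the nose in order to apply cleanly. Once this normalization is in place, the rest of the argument is purely formal.
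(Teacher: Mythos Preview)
Your proof is correct and follows essentially the same approach as the paper: both arguments establish bijectivity via Theorem \ref{thm:Jordan decomposition for connected center case}(ii) (the commutation of Jordan decomposition with Deligne--Lusztig induction) together with Theorem \ref{thm:Broue-Malle-Michel for connected center}, and both handle equivariance by normalizing $\sigma^*$ via an inner automorphism from $\L^{*F^*}$ so that $\sigma^*(s)=s$, then invoking the equivariance of each of the three constituent maps. Your treatment is slightly more explicit about why the normalization of $\sigma^*$ is harmless, whereas the paper simply writes ``without loss of generality, we may assume that $\sigma^*(s)=s$''.
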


\begin{proof}
First, we observe that the map $I^{\K}_{(\L,\lambda)}$ is a bijection because of Theorem \ref{thm:Jordan decomposition for connected center case} (ii), in fact
\begin{align*}
I^{\K}_{(\L,\lambda)}\left(\irr\left(W_{\K}\left(\L,\lambda\right)^F\right)\right)&=J_{\K,s}^{-1}\left(\irr\left(\R^{\c_{\K^*}(s)}_{\c_{\L^*}(s)}\left(J_{\L,s}\left(\lambda\right)\right)\right)\right)
\\
&=\irr\left(J_{\K,s}^{-1}\circ \R^{\c_{\K^*}(s)}_{\c_{\L^*}(s)}\circ J_{\L,s}\left(\lambda\right)\right)
\\
&=\irr\left(\R^{\K}_{\L}\left(\lambda\right)\right).
\end{align*}
Next, to show that the bijection is equivariant, let $\sigma\in\aut_\mathbb{F}(\G^F)_{\K,\L}$ and consider $\sigma^*\in\aut_\mathbb{F}(\G^{*F^*})_{\K^*,\L^*}$ with $\sigma\sim\sigma^*$ (see Lemma \ref{lem:Automorphisms, duality and Levi subgroups}). If $\sigma\in\aut_\mathbb{F}(\G^F)_{\K,(\L,\lambda)}$, then $\sigma^*$ stabilizes the $\L^{*F^*}$-orbit of $s$. Without loss of generality, we may assume that $\sigma^*(s)=s$. Then Theorem \ref{thm:Jordan decomposition for connected center case} (i) implies that $\sigma^*$ stabilizes $J_{\L,s}(\lambda)$. Applying Theorem \ref{thm:Jordan decomposition for connected center case} (i) and the equivariance properties of Corollary \ref{cor:Frattini isomorphism for connected center} and Theorem \ref{thm:Broue-Malle-Michel for connected center}, we conclude that
\begin{align*}
I^{\K}_{(\L,\lambda)}\left(\eta\right)^\sigma &=J^{-1}_{\K,s}\left(I^{\c_{\K^*}(s)}_{(\c_{\L^*}(s),J_{\L,s}\left(\lambda\right))}\left(\eta\hspace{2pt}^{i^{\K}_{\L,\lambda}}\right)\right)^\sigma
\\
&=J^{-1}_{\K,s}\left(I^{\c_{\K^*}(s)}_{(\c_{\L^*}(s),J_{\L,s}\left(\lambda\right))}\left(\left(\eta\hspace{2pt}^{i^{\K}_{\L,\lambda}}\right)^{\sigma^*}\right)\right)
\\
&=J^{-1}_{\K,s}\left(I^{\c_{\K^*}(s)}_{(\c_{\L^*}(s),J_{\L,s}\left(\lambda\right))}\left(\left(\eta\hspace{2pt}^\sigma\right)^{i^{\K}_{\L,\lambda}}\right)\right)
\\
&=I^{\K}_{(\L,\lambda)}\left(\eta\hspace{2pt}^\sigma\right)
\end{align*}
for every $\eta\in\irr(W_{\K}(\L,\lambda)^F)$.
\end{proof}

\begin{lem}
\label{lem:e-Harisch-Chandra parametrization for connected center, degree}
Assume Hypothesis \ref{hyp:e-Harish-Chandra theory, no regular embedding}. Then $I^{\K}_{(\L,\lambda)}(\eta)(1)_\ell=\left|\K^F:\n_{\K}(\L,\lambda)^F\right|_\ell\cdot\lambda(1)_\ell\cdot \eta(1)_\ell$ for every $\eta\in\irr(W_{\K}(\L,\lambda)^F)$.
\end{lem}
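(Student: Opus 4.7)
The plan is to reduce the claim to the unipotent setting via Jordan decomposition and then invoke the degree formula of Theorem \ref{thm:Broue-Malle-Michel for connected center}. Set $\mu:=J_{\L,s}(\lambda)$, $\M^*:=\c_{\L^*}(s)$ and $\H^*:=\c_{\K^*}(s)$, so that $(\M^*,\mu)$ is a unipotent $e$-cuspidal pair of $\H^*$ and, by the definition of $I^{\K}_{(\L,\lambda)}$ given in \eqref{eq:e-Harisch-Chandra parametrization for connected center},
\[
I^{\K}_{(\L,\lambda)}(\eta) \;=\; J_{\K,s}^{-1}\!\left(I^{\H^*}_{(\M^*,\mu)}(\widetilde\eta)\right),
\]
where $\widetilde\eta\in\irr(W_{\H^*}(\M^*,\mu)^{F^*})$ corresponds to $\eta$ via $i^{\K}_{\L,\lambda}$; in particular $\widetilde\eta(1)=\eta(1)$ since $i^{\K}_{\L,\lambda}$ is a group isomorphism.

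First I would apply Theorem \ref{thm:Jordan decomposition for connected center case}(iii) twice, once to $\lambda$ and once to $I^{\K}_{(\L,\lambda)}(\eta)$, obtaining
\[
\lambda(1)_\ell \;=\; \left|\L^{*F^*}:\M^{*F^*}\right|_\ell\cdot\mu(1)_\ell
\qquad\text{and}\qquad
I^{\K}_{(\L,\lambda)}(\eta)(1)_\ell \;=\; \left|\K^{*F^*}:\H^{*F^*}\right|_\ell\cdot I^{\H^*}_{(\M^*,\mu)}(\widetilde\eta)(1)_\ell,
\]
where I have used that the $p'$-part appearing in Theorem \ref{thm:Jordan decomposition for connected center case}(iii) has the same $\ell$-part as the full index, because $\ell\neq p$. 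Next I would apply the degree statement of Theorem \ref{thm:Broue-Malle-Michel for connected center} to the unipotent $e$-cuspidal pair $(\M^*,\mu)$ of $\H^*$ to obtain
\[
I^{\H^*}_{(\M^*,\mu)}(\widetilde\eta)(1)_\ell \;=\; \left|\H^{*F^*}:\n_{\H^*}(\M^*,\mu)^{F^*}\right|_\ell\cdot\mu(1)_\ell\cdot\eta(1)_\ell.
\]

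Combining these three displays and eliminating $\mu(1)_\ell$ yields
\[
I^{\K}_{(\L,\lambda)}(\eta)(1)_\ell \;=\; \frac{\left|\K^{*F^*}\right|_\ell\cdot\left|\M^{*F^*}\right|_\ell}{\left|\n_{\H^*}(\M^*,\mu)^{F^*}\right|_\ell\cdot\left|\L^{*F^*}\right|_\ell}\cdot\lambda(1)_\ell\cdot\eta(1)_\ell,
\]
so the remaining task is to identify this ratio with $\left|\K^F:\n_\K(\L,\lambda)^F\right|_\ell$. For this I would use Corollary \ref{cor:Frattini isomorphism for connected center}, which gives $|\n_{\H^*}(\M^*,\mu)^{F^*}|/|\M^{*F^*}|=|\n_\K(\L,\lambda)^F|/|\L^F|$, together with the dual-group identities $|\L^F|=|\L^{*F^*}|$ and $|\K^F|=|\K^{*F^*}|$. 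Taking $\ell$-parts and substituting collapses the above ratio to $|\K^F:\n_\K(\L,\lambda)^F|_\ell$, as required.

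There is no substantive obstacle here: the argument is essentially bookkeeping of $\ell$-parts, and every ingredient (Jordan decomposition commuting with Deligne--Lusztig induction and preserving degrees up to the relevant index, the BMM-type degree formula in the unipotent case, and the isomorphism of relative Weyl groups) has already been assembled in this section under Hypothesis \ref{hyp:e-Harish-Chandra theory, no regular embedding}. The only step requiring a brief justification is the equality $\widetilde\eta(1)=\eta(1)$, which is immediate from Corollary \ref{cor:Frattini isomorphism for connected center}.
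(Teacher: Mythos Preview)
Your proof is correct and follows essentially the same approach as the paper: both combine the degree relation from Theorem \ref{thm:Jordan decomposition for connected center case}(iii) (applied to $\lambda$ and to $I^{\K}_{(\L,\lambda)}(\eta)$), the unipotent degree formula from Theorem \ref{thm:Broue-Malle-Michel for connected center}, the Weyl group isomorphism of Corollary \ref{cor:Frattini isomorphism for connected center}, and the dual-group order identities $|\K^F|=|\K^{*F^*}|$, $|\L^F|=|\L^{*F^*}|$. The paper merely arranges these ingredients as a single chain of equalities starting from $\eta(1)_\ell$, whereas you state each ingredient separately before combining them.
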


\begin{proof}
By the condition on character degrees given in Theorem \ref{thm:Broue-Malle-Michel for connected center} together with Theorem \ref{thm:Jordan decomposition for connected center case} (iii), we deduce that
\begin{align*}
\eta(1)_\ell &=\left(\eta\right)^{i^{\K}_{\L,\lambda}}(1)_\ell=\dfrac{I^{\c_{\K^*}(s)}_{(\c_{\L^*}(s),J_{\L,s}\left(\lambda\right))}\left(\left(\eta\right)^{i^{\K}_{\L,\lambda}}\right)(1)_\ell}{J_{\L,s}\left(\lambda\right)(1)_\ell\cdot \left|\c_{\K^*}(s)^{F^*}:\n_{\c_{\K^*}(s)}(\c_{\L^*}(s),J_{\L,s}\left(\lambda\right))^{F^*}\right|_\ell}
\\
&=\dfrac{I^{\K}_{(\L,\lambda)}\left(\eta\right)(1)_\ell\cdot \left|\c_{\K^*}(s)^{F^*}\right|_\ell\cdot \left|\L^F\right|_\ell}{\lambda(1)_\ell\cdot |\c_{\L^*}(s)^{F^*}|_\ell\cdot |\K^F|_\ell\cdot \left|\c_{\K^*}(s)^{F^*}:\n_{\c_{\K^*}(s)}(\c_{\L^*}(s),J_{\L,s}\left(\lambda\right))^{F^*}\right|_\ell}
\\
&=\dfrac{I^{\K}_{(\L,\lambda)}\left(\eta\right)(1)_\ell}{\lambda(1)_\ell\cdot \left|\K^F:\n_{\K}(\L,\lambda)^F\right|_\ell}
\end{align*}
for every $\eta\in\irr(W_{\K}(\L,\lambda)^F)$. The results follows immediately from the above equality.
\end{proof}

\begin{lem}
\label{lem:e-Harisch-Chandra parametrization for connected center, central}
Assume Hypothesis \ref{hyp:e-Harish-Chandra theory, no regular embedding}. If $z\in\z(\K^{*F^*})$ corresponds to the characters $\wh{z}_{\L}\in\irr(\L^F)$ and $\wh{z}_{\K}\in\irr(\K^F)$ via \cite[(8.19)]{Cab-Eng04}, then $\lambda\cdot\wh{z}_{\L}$ is $e$-cuspidal, $W_{\K}(\L,\lambda)^F=W_{\K}(\L,\lambda\cdot\wh{z}_{\L})^F$ and
\[I^{\K}_{(\L,\lambda)}\left(\eta\right)\cdot \wh{z}_{\K}=I^{\K}_{(\L,\lambda\cdot\wh{z}_{\L})}\left(\eta\right)\]
for every $\eta\in\irr(W_{\K}(\L,\lambda)^F)$.
\end{lem}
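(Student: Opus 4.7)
The plan is to reduce everything to the unipotent side via the Jordan decomposition maps from Theorem \ref{thm:Jordan decomposition for connected center case} and then extract the three assertions as consequences of part (iv) of that theorem together with Corollary \ref{cor:Frattini isomorphism for connected center}. Since $\L^*\leq \K^*$, any maximal torus of $\L^*$ is also a maximal torus of $\K^*$, so $\z(\K^*)\leq \L^*$ and hence $z\in\z(\K^{*F^*})$ automatically lies in $\z(\L^{*F^*})$. In particular $\c_{\L^*}(sz)=\c_{\L^*}(s)$ and $\c_{\K^*}(sz)=\c_{\K^*}(s)$, and the standard functoriality of \cite[(8.19)]{Cab-Eng04} in Levi inclusions gives $\wh{z}_{\L}=(\wh{z}_{\K})_{\L^F}$.

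The first step is to prove the $e$-cuspidality of $\lambda\cdot\wh{z}_{\L}$. Since $\lambda\in\E(\L^F,[s])$ we have $\lambda\cdot\wh{z}_{\L}\in\E(\L^F,[sz])$ by \cite[Proposition 8.26]{Cab-Eng04}, and Theorem \ref{thm:Jordan decomposition for connected center case}(iv) gives $J_{\L,sz}(\lambda\cdot\wh{z}_{\L})=J_{\L,s}(\lambda)$, which is a unipotent $e$-cuspidal character of $\c_{\L^*}(s)^{F^*}=\c_{\L^*}(sz)^{F^*}$; the compatibility of Jordan decomposition with $e$-cuspidality (\cite[Proposition 1.10]{Cab-Eng99}) then yields that $\lambda\cdot\wh{z}_{\L}$ is $e$-cuspidal. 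The second step is the equality of the relative Weyl groups. Since $\wh{z}_{\L}=(\wh{z}_{\K})_{\L^F}$ and every $n\in\n_\K(\L)^F$ lies in $\K^F$, conjugation by $n$ fixes $\wh{z}_\K$ and therefore fixes $\wh{z}_{\L}$; multiplication by the $\n_\K(\L)^F$-invariant linear character $\wh{z}_{\L}$ is then equivariant and preserves stabilizers, so $W_{\K}(\L,\lambda)^F=W_{\K}(\L,\lambda\cdot\wh{z}_{\L})^F$.

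The final step is the identity relating the two bijections. Using the coincidence of unipotent $e$-cuspidal pairs $(\c_{\L^*}(sz),J_{\L,sz}(\lambda\cdot\wh{z}_{\L}))=(\c_{\L^*}(s),J_{\L,s}(\lambda))$ established above, Corollary \ref{cor:Frattini isomorphism for connected center} yields $i^{\K}_{\L,\lambda}=i^{\K}_{\L,\lambda\cdot\wh{z}_{\L}}$. Consequently, the unipotent character
\[\nu:=I^{\c_{\K^*}(s)}_{(\c_{\L^*}(s),J_{\L,s}(\lambda))}\bigl(\eta^{i^{\K}_{\L,\lambda}}\bigr)\in\E\bigl(\c_{\K^*}(s)^{F^*},[1]\bigr)\]
is the common inner term in the definition of both $I^{\K}_{(\L,\lambda)}(\eta)$ and $I^{\K}_{(\L,\lambda\cdot\wh{z}_{\L})}(\eta)$. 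Applying the equivalent form of Theorem \ref{thm:Jordan decomposition for connected center case}(iv) to $\K$ gives $J^{-1}_{\K,s}(\nu)\cdot\wh{z}_{\K}=J^{-1}_{\K,sz}(\nu)$, and substituting into the definition \eqref{eq:e-Harisch-Chandra parametrization for connected center} yields
\[I^{\K}_{(\L,\lambda)}(\eta)\cdot\wh{z}_{\K}=J^{-1}_{\K,s}(\nu)\cdot\wh{z}_{\K}=J^{-1}_{\K,sz}(\nu)=I^{\K}_{(\L,\lambda\cdot\wh{z}_{\L})}(\eta).\]

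The argument is almost entirely formal once Theorem \ref{thm:Jordan decomposition for connected center case}(iv) and Corollary \ref{cor:Frattini isomorphism for connected center} are in hand, and I do not anticipate any real obstacle; the only mildly delicate point is the compatibility $\wh{z}_{\L}=(\wh{z}_{\K})_{\L^F}$ between the parametrisations \cite[(8.19)]{Cab-Eng04} at the two Levi subgroups, which underlies both the equality of relative Weyl groups and the final substitution.
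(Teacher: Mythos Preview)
Your proof is correct and follows essentially the same route as the paper: both reduce to the unipotent side via Theorem~\ref{thm:Jordan decomposition for connected center case}(iv), use Corollary~\ref{cor:Frattini isomorphism for connected center} to identify $i^{\K}_{\L,\lambda}$ with $i^{\K}_{\L,\lambda\cdot\wh{z}_{\L}}$, and then read off the main identity from the definition of $I^{\K}_{(\L,\lambda)}$. The only minor deviations are that the paper obtains the $e$-cuspidality of $\lambda\cdot\wh{z}_{\L}$ by a direct citation of \cite[Proposition~12.1]{Bon06} rather than passing through Jordan decomposition, and it extracts the equality $W_{\K}(\L,\lambda)^F=W_{\K}(\L,\lambda\cdot\wh{z}_{\L})^F$ from Corollary~\ref{cor:Frattini isomorphism for connected center} rather than from your (equally valid) direct argument that $\wh{z}_{\L}$ is $\n_{\K}(\L)^F$-invariant.
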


\begin{proof}
According to \cite[Proposition 12.1]{Bon06} the character $\lambda\cdot\wh{z}_{\L}$ is $e$-cuspidal, while Corollary \ref{cor:Frattini isomorphism for connected center} shows that $W_{\K}(\L,\lambda)^F=W_{\K}(\L,\lambda\cdot\wh{z}_{\L})^F$ and that $i^{\K}_{\L,\lambda}=i^{\K}_{\L,\lambda\cdot \wh{z}_{\L}}$. Using Theorem \ref{thm:Jordan decomposition for connected center case} (iv) we obtain
\[I_{(\c_{\L^*}(s),J_{\L,s}\left(\lambda\right))}^{\c_{\K^*}(s)}=I_{(\c_{\L^*}(sz),J_{\L,sz}\left(\lambda\cdot \wh{z}_{\L}\right))}^{\c_{\K^*}(sz)}\]
and
\begin{align*}
I^{\K}_{(\L,\lambda)}\left(\eta\right)\cdot \wh{z}_{\K}&=J_{\K,s}^{-1}\left(I_{(\c_{\L^*}(s),J_{\L,s}\left(\lambda\right))}^{\c_{\K^*}(s)}\left(\left(\eta\right)^{i^{\K}_{\L,\lambda}}\right)\right)\cdot \wh{z}_{\K}
\\
&=J_{\K,s}^{-1}\left(I_{(\c_{\L^*}(sz),J_{\L,sz}\left(\lambda\cdot \wh{z}_{\L}\right))}^{\c_{\K^*}(sz)}\left(\left(\eta\right)^{i^{\K}_{\L,\lambda\cdot \wh{z}_{\L}}}\right)\right)\cdot \wh{z}_{\K}
\\
&=J_{\K,sz}^{-1}\left(I_{(\c_{\L^*}(sz),J_{\L,sz}\left(\lambda\cdot \wh{z}_{\L}\right))}^{\c_{\K^*}(sz)}\left(\left(\eta\right)^{i^{\K}_{\L,\lambda\cdot \wh{z}_{\L}}}\right)\right)
\\
&=I^{\K}_{(\L,\lambda\cdot\wh{z}_{\L})}\left(\eta\right)
\end{align*}
for every $\eta\in\irr(W_{\K}(\L,\lambda)^F)$.
\end{proof}

Now, combining Lemma \ref{lem:e-Harisch-Chandra parametrization for connected center, equivariant}, Lemma \ref{lem:e-Harisch-Chandra parametrization for connected center, degree} and Lemma \ref{lem:e-Harisch-Chandra parametrization for connected center, central}, we obtain Theorem \ref{thm:Main e-Harish-Chandra parametrization for connected center}.

To conclude, we prove one final result which, although not used directly in the subsequent sections, might be of independent interest. Under the Hyposthesis \ref{hyp:e-Harish-Chandra theory, no regular embedding}, the bijections $I_{(\L,\lambda)}^\K$ from \eqref{eq:e-Harisch-Chandra parametrization for connected center} extend by linearity to $\mathbb{Z}$-linear bijections
\begin{equation}
\label{eq:Extending linearly}
I^\K_{(\L,\lambda)}:\mathbb{Z}\irr\left(W_\K(\L,\lambda)^F\right)\to\mathbb{Z}\E\left(\K^F,(\L,\lambda)\right).
\end{equation}
If we consider the definition given in \eqref{eq:e-Harisch-Chandra parametrization for connected center} and replace the maps $I_{(\c_{\L^*}(s),J_{\L,s}\left(\lambda\right))}^{\c_{\K^*}(s)}$ given by Theorem \ref{thm:Broue-Malle-Michel for connected center} with those given by \cite[Theorem 3.2 (2)]{Bro-Mal-Mic93}, then we obtain a collection of isometries
\[\mathcal{I}^\K_{(\L,\lambda)}:\mathbb{Z}\irr\left(W_\K(\L,\lambda)^F\right)\to\mathbb{Z}\E\left(\K^F,(\L,\lambda)\right)\]
that satisfy certain important properties. However, notice that the maps given in \eqref{eq:Extending linearly} agree with the new maps $\mathcal{I}^\K_{(\L,\lambda)}$ only up to a choice of signs.

The next result should be compared to \cite[Theorem 3.2]{Bro-Mal-Mic93} and \cite[Theorem 1.4 (b)]{Kes-Mal13} (see also \cite[Definition 2.9]{Kes-Mal13}).

\begin{theo}
\label{thm:Compatibility with DL-induction}
Assume Hypothesis \ref{hyp:e-Harish-Chandra theory, no regular embedding}. Then there exist a collection of isometries
\[\mathcal{I}^\K_{(\L,\lambda)}:\mathbb{Z}\irr\left(W_\K(\L,\lambda)^F\right)\to\mathbb{Z}\E\left(\K^F,(\L,\lambda)\right)\]
where $\K$ runs over the set of $e$-split Levi subgroups of $\hspace{1pt}\G$ and $(\L,\lambda)$ over the set of $e$-cuspidal pairs of $\hspace{1pt}\K$ such that:
\begin{enumerate}
\item for all $\K$ and all $(\L,\lambda)$ we have
\[\R_\K^\G\circ\mathcal{I}^\K_{(\L,\lambda)}=\mathcal{I}^\G_{(\L,\lambda)}\circ {\rm Ind}^{W_\G(\L,\lambda)^F}_{W_\K(\L,\lambda)^F};\]
\item the collection $(\mathcal{I}^\K_{(\L,\lambda)})_{\K,(\L,\lambda)}$ is stable under the action of the Weyl group $W_{\G^F}$;
\item $\mathcal{I}^\K_{(\L,\lambda)}$ maps the trivial character of the trivial group $W_\L(\L,\lambda)^F$ to $\lambda$.
\end{enumerate}
\end{theo}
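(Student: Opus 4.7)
The plan is to adopt verbatim the construction of $\mathcal{I}^\K_{(\L,\lambda)}$ already sketched in the paragraph preceding the theorem and then to verify the three claimed properties in turn. Explicitly, for each pair $(\K,(\L,\lambda))$ fix a semisimple element $s\in \L^{*F^*}$ with $\lambda\in\E(\L^F,[s])$; as noted before \eqref{eq:e-Harisch-Chandra parametrization for connected center}, the pair $(\c_{\L^*}(s),J_{\L,s}(\lambda))$ is unipotent $e$-cuspidal in $\c_{\K^*}(s)$. Set
\[
\mathcal{I}^\K_{(\L,\lambda)}(\alpha) := J^{-1}_{\K,s}\left(\mathcal{J}^{\c_{\K^*}(s)}_{(\c_{\L^*}(s),J_{\L,s}(\lambda))}\left(\alpha^{i^\K_{\L,\lambda}}\right)\right),
\]
where $\mathcal{J}$ denotes the unipotent isometry supplied by \cite[Theorem 3.2]{Bro-Mal-Mic93} and $\alpha^{i^\K_{\L,\lambda}}$ is the pullback along the isomorphism of Corollary \ref{cor:Frattini isomorphism for connected center}. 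That this is a well-defined isometry onto $\mathbb{Z}\E(\K^F,(\L,\lambda))$ follows by the same calculation as in Lemma \ref{lem:e-Harisch-Chandra parametrization for connected center, equivariant}: the Jordan decomposition and the group isomorphism $i^\K_{\L,\lambda}$ induce isometries on class functions, $\mathcal{J}$ is an isometry, and Theorem \ref{thm:Jordan decomposition for connected center case}~(ii) identifies the image. Independence of the choice of $s$ modulo $\L^{*F^*}$-conjugacy reduces, via Theorem \ref{thm:Jordan decomposition for connected center case}~(i) and Corollary \ref{cor:Frattini isomorphism for connected center}, to the action of inner automorphisms of $\G^*$.

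Property~(iii) is then immediate: when $\K=\L$ the relative Weyl group is trivial, and the corresponding unipotent statement $\mathcal{J}(1)=J_{\L,s}(\lambda)$ combined with $J^{-1}_{\L,s}\circ J_{\L,s}={\rm id}$ gives $\mathcal{I}^\L_{(\L,\lambda)}(1)=\lambda$. Property~(ii), stability of the family under the Weyl group $W_{\G^F}$, is handled by combining the equivariance of $J$ (Theorem \ref{thm:Jordan decomposition for connected center case}~(i)) and of $i^\K_{\L,\lambda}$ (Corollary \ref{cor:Frattini isomorphism for connected center}) under the inner automorphisms of $\G$ induced by $\G^F$, together with the corresponding $W_{\c_{\G^*}(s)^{F^*}}$-stability of the family $\mathcal{J}$ in the unipotent setting. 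This is precisely the calculation already performed in Lemma \ref{lem:e-Harisch-Chandra parametrization for connected center, equivariant}, specialised to inner automorphisms.

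The main work is property~(i). Starting from $\R_\K^\G\circ \mathcal{I}^\K_{(\L,\lambda)}$ and unfolding the definition, the plan is to apply, in order: Theorem \ref{thm:Jordan decomposition for connected center case}~(ii) in the equivalent form $\R_\K^\G\circ J^{-1}_{\K,s}=J^{-1}_{\G,s}\circ \R^{\c_{\G^*}(s)}_{\c_{\K^*}(s)}$, thereby transporting Deligne--Lusztig induction to the dual side; the transitivity of the unipotent isometries, which is part~(2) of \cite[Theorem 3.2]{Bro-Mal-Mic93}; and finally the intertwining of the induction functor on the Weyl-group side along the Frattini-type isomorphism. The technical heart of the argument, and the main obstacle, is the compatibility
\[
i^\G_{\L,\lambda}\big|_{W_\K(\L,\lambda)^F}=i^\K_{\L,\lambda}.
\]
This identity is not stated explicitly in Corollary \ref{cor:Frattini isomorphism for connected center}, but should follow from the naturality of the construction given in the proof of \cite[Corollary 3.3]{Cab-Spa13}: the Frattini isomorphism arises from a fixed choice of duality together with $\G^{*F^*}$-conjugation implementing the Jordan decomposition, so choosing these data uniformly in $\K$ produces a coherent family $(i^\K_{\L,\lambda})_\K$ compatible with the natural inclusions $W_\K(\L,\lambda)^F\hookrightarrow W_\G(\L,\lambda)^F$ and $W_{\c_{\K^*}(s)}(\c_{\L^*}(s),J_{\L,s}(\lambda))^{F^*}\hookrightarrow W_{\c_{\G^*}(s)}(\c_{\L^*}(s),J_{\L,s}(\lambda))^{F^*}$.
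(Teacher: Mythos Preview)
Your proposal is correct and follows essentially the same approach as the paper: you construct $\mathcal{I}^\K_{(\L,\lambda)}$ by transporting the unipotent isometries of \cite[Theorem 3.2(2)]{Bro-Mal-Mic93} through Jordan decomposition and the Frattini isomorphism, and then verify (i)--(iii) exactly as the paper does, with the key technical point being the compatibility $i^\G_{\L,\lambda}|_{W_\K(\L,\lambda)^F}=i^\K_{\L,\lambda}$, which the paper likewise extracts from the construction in the proof of \cite[Corollary 3.3]{Cab-Spa13}. The only cosmetic difference is the order in which the three properties are checked.
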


\begin{proof}
As explained previously, the maps are constructed as in \eqref{eq:e-Harisch-Chandra parametrization for connected center} by replacing the maps $I_{(\c_{\L^*}(s),J_{\L,s}\left(\lambda\right))}^{\c_{\K^*}(s)}$ given by Theorem \ref{thm:Broue-Malle-Michel for connected center} with those given by \cite[Theorem 3.2 (2)]{Bro-Mal-Mic93}. Consider $\K$ and $(\L,\lambda)$ as above and fix $\eta\in\irr(W_\K(\L,\lambda)^F)$. Since $W_\K(\L,\lambda)^F\leq W_\G(\L,\lambda)^F$, the construction given in the proof of \cite[Corollary 3.3]{Cab-Spa13} shows that the map $i^\K_{\L,\lambda}$ given by Corollary \ref{cor:Frattini isomorphism for connected center} coincides with the restriction of $i^\G_{\L,\lambda}$ to $W_\K(\L,\lambda)^F$. In particular, if we write $\rho^{i^\K_{\L,\lambda}}$ to denote the element of $\mathbb{Z}\irr(W_{\c_{\K^*}(s)}(\c_{\L^*}(s),J_{\L,s}(\lambda))^{F^*})$ corresponding to $\rho\in\mathbb{Z}\irr(W_\K(\L,\lambda)^F)$ via the isomorphism $i^{\K}_{\L,\lambda}$, it follows by elementary character theory that
\begin{equation}
\label{eq:Relative Weyl group and induction}
\left({\rm Ind}^{W_\G(\L,\lambda)^F}_{W_\K(\L,\lambda)^F}(\eta)\right)^{i^\G_{\L,\lambda}}={\rm Ind}^{W_{\c_{\G^*}(s)}(\c_{\L^*}(s),J_{\L,s}(\lambda))^{F^*}}_{W_{\c_{\K^*}(s)}(\c_{\L^*}(s),J_{\L,s}(\lambda))^{F^*}}\left(\eta^{i^\K_{\L,\lambda}}\right).
\end{equation}
By the definition given in \eqref{eq:e-Harisch-Chandra parametrization for connected center} and applying Theorem \ref{thm:Jordan decomposition for connected center case} (ii) and \cite[Theorem 3.2 (2.a)]{Bro-Mal-Mic93} we conclude that
\begin{align*}
\R^\G_\K\circ\mathcal{I}^\K_{(\L,\lambda)}(\eta)&=\R_\K^\G\left(J^{-1}_{\K,s}\left(\mathcal{I}_{(\c_{\L^*}(s),J_{\L,s}(\lambda))}^{\c_{\K^*}(s)}\left(\eta^{i^\K_{\L,\lambda}}\right)\right)\right)
\\
&=J^{-1}_{\G,s}\circ\R_{\c_{\K^*}(s)}^{\c_{\G^*}(s)}\left(\mathcal{I}_{(\c_{\L^*}(s),J_{\L,s}(\lambda))}^{\c_{\K^*}(s)}\left(\eta^{i^\K_{\L,\lambda}}\right)\right)
\\
&=J^{-1}_{\G,s}\circ\mathcal{I}_{(\c_{\L^*}(s),J_{\L,s}(\lambda))}^{\c_{\G^*}(s)}\left({\rm Ind}^{W_{\c_{\G^*}}(\c_{\L^*}(s),J_{\L,s}(\lambda))^{F^*}}_{W_{\c_{\K^*}}(\c_{\L^*}(s),J_{\L,s}(\lambda))^{F^*}}\left(\eta^{i^\K_{\L,\lambda}}\right)\right)
\\
&=J^{-1}_{\G,s}\circ\mathcal{I}_{(\c_{\L^*}(s),J_{\L,s}(\lambda))}^{\c_{\G^*}(s)}\left(\left({\rm Ind}^{W_\G(\L,\lambda)^F}_{W_\K(\L,\lambda)^F}(\eta)\right)^{i^\G_{\L,\lambda}}\right)
\\
&=\mathcal{I}^\G_{(\L,\lambda)}\circ{\rm Ind}^{W_\G(\L,\lambda)^F}_{W_\K(\L,\lambda)^F}(\eta)
\end{align*}
where the penultimate equality holds because of \eqref{eq:Relative Weyl group and induction}. This proves (i).

The other properties follow by a similar argument. First, (ii) follows from \cite[Theorem 3.2 (2.b)]{Bro-Mal-Mic93} together with Theorem \ref{thm:Jordan decomposition for connected center case} (i) and recalling the compatibility with automorphisms obtained in Corollary \ref{cor:Frattini isomorphism for connected center}. Secondly, to prove (iii) we observe that the trivial character of $W_\L(\L,\lambda)^F$ maps to the trivial character of $W_{\c_{\L^*}(s)}(\c_{\L^*}(s),J_{\L,s}(\lambda))^F$ via the isomorphism $i^\L_{\L,\lambda}$ while the character $J_{\L,s}(\lambda)$ is mapped to $\lambda$ via $J_{\L,s}^{-1}$. Then (iii) follows from \cite[Theorem 3.2 (2.c)]{Bro-Mal-Mic93}.
\end{proof}

\subsection{Consequences of equivariant maximal extendibility}
\label{sec:Equivariant maximal extendility}

We start by recalling the definition of maximal extendibility (see \cite[Definition 3.5]{Mal-Spa16}).

\begin{defin}
\label{def:Maximal extendibility}
Let $Y\unlhd X$ be finite groups and consider $\mathcal{Y}\subseteq \irr(Y)$. Then, we say that \emph{maximal extendibility} holds for $\mathcal{Y}$ with respect to $Y\unlhd X$ if every $\vartheta\in\mathcal{Y}$ extends to $X_\vartheta$. In this case, an \emph{extension map} is any map
\[\Lambda:\mathcal{Y}\to\coprod\limits_{Y\leq X'\leq X}\irr(X')\]
such that for every $\vartheta\in\mathcal{Y}$, the character $\Lambda(\vartheta)\in\irr(X_\vartheta)$ is an extension of $\vartheta$. If $\mathcal{Y}=\irr(Y)$, then we just say that maximal extendibility holds with respect to $Y\unlhd X$.
\end{defin}

In this section we consider the following hypothesis.

\begin{hyp}
\label{hyp:e-Harish-Chandra theory}
Let $\G$, $F:\G\to \G$, $q$, $\ell$ and $e$ be as in Notation \ref{notation} and suppose that $\G$ is a simple algebraic group not of type $\mathbf{E}_6$, $\mathbf{E}_7$ or $\mathbf{E}_8$.
\end{hyp}

As in Section \ref{sec:Automorphisms}, let $i:\G\to\wt{\G}$ be a regular embedding compatible with $F$ and consider the group $\mathcal{A}$ generated by field and graph automorphisms of $\G$ in such a way that $\mathcal{A}$ acts on $\wt{\G}^F$. Then we can define the semidirect product $\wt{\G}^F\rtimes \mathcal{A}$. For every $F$-stable closed connected subgroup $\H$ of $\G$ we denote by $(\wt{\G}^F\mathcal{A})_\H$ the stabilizer of $\H$ under the action of $\wt{\G}^F\mathcal{A}$.

Consider $\mathcal{K}$ as in Section \ref{sec:Regular embeddings}. We form the external semidirect product $(\wt{\G}^F\mathcal{A})\ltimes \mathcal{K}$ where, for $x\in \wt{\G}^F\mathcal{A}$ and $z\in \mathcal{K}$, the element $z^x$ is defined as the unique element of $\mathcal{K}$ corresponding to $(\wh{z}_{\wt{\G}})^x\in\irr(\wt{\G}^F/\G^F)$ via \eqref{central kernel and linear characters}. For every $F$-stable Levi subgroup $\L$ of $\G$, notice that $(\wt{\G}^F\mathcal{A})_\L\ltimes \mathcal{K}$ acts on $\irr(\wt{\L}^F)$ via
\[\wt{\lambda}^{xz}:=\wt{\lambda}^x\cdot \wh{z}_{\wt{\L}}\]
for every $\wt{\lambda}\in\irr(\wt{\L}^F)$, $x\in (\wt{\G}^F\mathcal{A})_\L$ and $z\in \mathcal{K}$. We denote by $((\wt{\G}^F\mathcal{A})_\L\ltimes \mathcal{K})_{\wt{\lambda}}$ the stabilizer of $\wt{\lambda}$ under this action.

Let $\wt{\L}$ and $\wt{\K}$ be $F$-stable Levi subgroups of $\wt{\G}$ with $\wt{\L}\leq \wt{\K}$ and consider an extension map $\wt{\Lambda}$ with respect to $\wt{\L}\unlhd \n_{\wt{\K}}(\L)^F$. In this case notice that
\[\wt{\Lambda}(\wt{\lambda})^{xz}:=\wt{\Lambda}(\wt{\lambda})^x\cdot\wh{z}_{\n_{\wt{\K}}(\wt{\L},\wt{\lambda}^x)^F}\]
is an extension of $\wt{\lambda}^{xz}$ to $\n_{\wt{\K}}(\wt{\L},\wt{\lambda}^{xz})^F=\n_{\wt{\K}}(\wt{\L},\wt{\lambda}^x)^F$, where $\wh{z}_{\n_{\wt{\K}}(\wt{\L},\wt{\lambda}^x)^F}$ denotes the restriction of $\wh{z}_{\wt{\K}}$ to $\n_{\wt{\K}}(\wt{\L},\wt{\lambda}^x)^F$.

The next definition should be compared with condition ${\rm{B}}(d)$ of \cite[Definition 2.2]{Cab-Spa19} with $d=e$.

\begin{defin}
We say that an extension map $\wt{\Lambda}$ with respect to $\wt{\L}^F\unlhd \n_{\wt{\K}}(\L)^F$ is $((\wt{\G}^F\mathcal{A})_{\K,\L}\ltimes \mathcal{K})$-equivariant if $\wt{\Lambda}(\wt{\lambda}^{xz})=\wt{\Lambda}(\wt{\lambda})^{xz}$ for every $\wt{\lambda}\in\irr(\wt{\L}^F)$, $x\in(\wt{\G}^F\mathcal{A})_{\K,\L}$ and $z\in \mathcal{K}$. Moreover, if $\cusp{\wt{\L}^F}$ denotes the set of (irreducible) $e$-cuspidal characters of $\L^F$, then $(\wt{\G}^F\mathcal{A})_{\K,\L}\ltimes \mathcal{K}$ acts on $\cusp{\wt{\L}^F}$ (see \cite[Proposition 12.1]{Bon06}) and therefore we can also consider a $((\wt{\G}^F\mathcal{A})_{\K,\L}\ltimes \mathcal{K})$-equivariant extension map $\wt{\Lambda}$ for $\cusp{\wt{\L}^F}$ with respect to $\wt{\L}^F\unlhd \n_{\wt{\K}}(\L)^F$.
\end{defin}

Let $\wt{\K}$ be an $F$-stable Levi subgroup of $\wt{\G}$ and consider an $e$-cuspidal pair $(\wt{\L},\wt{\lambda})$ of $\wt{\K}$. Using the bijection $I^{\wt{\K}}_{(\wt{\L},\wt{\lambda})}$ from \eqref{eq:e-Harisch-Chandra parametrization for connected center} and assuming the existence of an extension map $\wt{\Lambda}$ for $\cusp{\wt{\L}^F}$ with respect to $\wt{\L}^F\unlhd \n_{\wt{\K}}(\L)^F$, we can define the map
\begin{align}
\label{eq:Bijection for connected center}
\Upsilon_{(\wt{\L},\wt{\lambda})}^{\wt{\K}}:\E\left(\wt{\K}^F,\left(\wt{\L},\wt{\lambda}\right)\right)&\to\irr\left(\n_{\wt{\K}}(\L)^F\enspace\middle|\enspace \wt{\lambda}\right)
\\
I^{\wt{\K}}_{(\wt{\L},\wt{\lambda})}\left(\wt{\eta}\right)&\mapsto\left(\wt{\Lambda}\left(\wt{\lambda}\right)\cdot \wt{\eta}\right)^{\n_{\wt{\K}}(\L)^F}\nonumber
\end{align}
for every $\wt{\eta}\in\irr\left(W_{\wt{\K}}\left(\wt{\L},\wt{\lambda}\right)^F\right)$. Notice that $\Upsilon_{(\wt{\L},\wt{\lambda})}^{\wt{\K}}$ is a bijection by the Clifford correspondence and Gallagher's theorem (see \cite[Theorem 6.11 and Corollary 6.17]{Isa76}).

First we show that the bijection $\Upsilon_{(\wt{\L},\wt{\lambda})}^{\wt{\K}}$ from \eqref{eq:Bijection for connected center} preserves the $\ell$-defect of characters. Recall that for any finite group $X$ and any $\chi\in\irr(X)$, the \emph{$\ell$-defect} of $\chi$ is the non-negative integer $d(\chi)$ such that $\ell^{d(\chi)}\chi(1)_\ell=|X|_\ell$.

\begin{lem}
\label{lem:Bijection for connected center, defect}
Assume Hypothesis \ref{hyp:e-Harish-Chandra theory} and suppose there exists an extension map $\wt{\Lambda}$ for $\cusp{\wt{\L}^F}$ with respect to $\wt{\L}^F\unlhd \n_{\wt{\K}}(\L)^F$. For every $\wt{\eta}\in\irr\left(W_{\wt{\K}}\left(\wt{\L},\wt{\lambda}\right)^F\right)$ we have
\[d\left(I^{\wt{\K}}_{(\wt{\L},\wt{\lambda})}\left(\wt{\eta}\right)\right)=d\left(\left(\wt{\Lambda}\left(\wt{\lambda}\right)\cdot \wt{\eta}\right)^{\n_{\wt{\K}}(\L)^F}\right).\]
\end{lem}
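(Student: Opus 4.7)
The plan is a direct comparison of character degrees on both sides. First observe that Hypothesis \ref{hyp:e-Harish-Chandra theory} implies that $\wt{\G}$ satisfies Hypothesis \ref{hyp:e-Harish-Chandra theory, no regular embedding}: by construction $\wt{\G}$ has connected centre, and $[\wt{\G},\wt{\G}] = [\G,\G] = \G$ is simple not of type $\mathbf{E}_6$, $\mathbf{E}_7$ or $\mathbf{E}_8$. This allows the application of Lemma \ref{lem:e-Harisch-Chandra parametrization for connected center, degree} to $\wt{\G}$ itself. Writing $\wt{\chi} := I^{\wt{\K}}_{(\wt{\L},\wt{\lambda})}(\wt{\eta})$ and $\wt{\psi} := (\wt{\Lambda}(\wt{\lambda})\cdot \wt{\eta})^{\n_{\wt{\K}}(\L)^F}$, the lemma yields
\[\wt{\chi}(1)_\ell = \bigl|\wt{\K}^F : \n_{\wt{\K}}(\wt{\L},\wt{\lambda})^F\bigr|_\ell \cdot \wt{\lambda}(1)_\ell \cdot \wt{\eta}(1)_\ell.\]

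For the induced side I would invoke the identity $\n_{\wt{\K}}(\L)^F = \n_{\wt{\K}}(\wt{\L})^F$ recorded in Section \ref{sec:Regular embeddings} (valid because $\wt{\L} = \L\z(\wt{\G})$ and $\wt{\L}\cap \G = \L$), from which $\n_{\wt{\K}}(\L,\wt{\lambda})^F = \n_{\wt{\K}}(\wt{\L},\wt{\lambda})^F$. Since $\wt{\Lambda}(\wt{\lambda})$ is an extension of $\wt{\lambda}$ to this common stabilizer and $\wt{\eta}$ inflates from $W_{\wt{\K}}(\wt{\L},\wt{\lambda})^F$, Gallagher's theorem gives $(\wt{\Lambda}(\wt{\lambda})\cdot \wt{\eta})(1) = \wt{\lambda}(1)\wt{\eta}(1)$, and the induced character formula yields
\[\wt{\psi}(1) = \bigl|\n_{\wt{\K}}(\L)^F : \n_{\wt{\K}}(\wt{\L},\wt{\lambda})^F\bigr| \cdot \wt{\lambda}(1) \cdot \wt{\eta}(1).\]

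Applying the $\ell$-adic valuation $v_\ell$ and subtracting these expressions from $v_\ell(|\wt{\K}^F|)$ and $v_\ell(|\n_{\wt{\K}}(\L)^F|)$ respectively, both defects collapse to the same quantity
\[d(\wt{\chi}) = d(\wt{\psi}) = v_\ell\bigl(|\n_{\wt{\K}}(\wt{\L},\wt{\lambda})^F|\bigr) - v_\ell(\wt{\lambda}(1)) - v_\ell(\wt{\eta}(1)),\]
which is the desired equality. The argument is essentially a routine manipulation; the only subtleties worth flagging are the identification $\n_{\wt{\K}}(\L) = \n_{\wt{\K}}(\wt{\L})$ and the fact that Lemma \ref{lem:e-Harisch-Chandra parametrization for connected center, degree} only furnishes the $\ell$-part of $\wt{\chi}(1)$, but since $\wt{\chi}(1)/\wt{\chi}(1)_\ell$ is an $\ell'$-integer this suffices for the $v_\ell$ computation. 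I do not anticipate any serious obstacle.
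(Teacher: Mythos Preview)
Your proof is correct and follows the same approach as the paper: apply Lemma \ref{lem:e-Harisch-Chandra parametrization for connected center, degree} to $\wt{\G}$ and compare with the degree of the induced character. The paper's proof compresses your degree computation for $\wt{\psi}$ into the single remark that ``induction of characters preserves the defect (this follows from the degree formula for induced characters),'' but the content is identical.
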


\begin{proof}
This follows immediately from Lemma \ref{lem:e-Harisch-Chandra parametrization for connected center, degree} applied to $\wt{\G}$ after noticing that induction of characters preserves the defect (this follows from the degree formula for induced characters).
\end{proof}

The bijection $\Upsilon_{(\wt{\L},\wt{\lambda})}^{\wt{\K}}$ from \eqref{eq:Bijection for connected center} also preserves central characters.

\begin{lem}
\label{lem:Bijection for connected center, restriction to center}
Assume Hypothesis \ref{hyp:e-Harish-Chandra theory} and suppose there exists an extension map $\wt{\Lambda}$ for $\cusp{\wt{\L}^F}$ with respect to $\wt{\L}^F\unlhd \n_{\wt{\K}}(\L)^F$. For every $\wt{\eta}\in\irr\left(W_{\wt{\K}}\left(\wt{\L},\wt{\lambda}\right)^F\right)$ we have
\[\irr\left(I^{\wt{\K}}_{(\wt{\L},\wt{\lambda})}\left(\wt{\eta}\right)_{\z(\wt{\K}^F)}\right)=\irr\left(\left(\left(\wt{\Lambda}\left(\wt{\lambda}\right)\cdot \wt{\eta}\right)^{\n_{\wt{\K}}(\L)^F}\right)_{\z\left(\wt{\K}^F\right)}\right).\]
\end{lem}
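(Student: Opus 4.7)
The plan is to reduce both sides to a multiple of a single linear character of $\z(\wt{\K}^F)$, namely the central character of $\wt{\lambda}$ restricted to $\z(\wt{\K}^F)$. The key geometric observation is that $\z(\wt{\K}) \leq \wt{\L}$ (since $\wt{\L}$, as a Levi subgroup, contains every torus that centralises $\wt{\K}$) and $\z(\wt{\K})$ centralises $\wt{\L}$, so in fact $\z(\wt{\K}^F) \leq \z(\wt{\L}^F) \leq \wt{\L}^F \leq \n_{\wt{\K}}(\wt{\L},\wt{\lambda})^F \leq \n_{\wt{\K}}(\L)^F$, and $\z(\wt{\K}^F)$ is central in $\n_{\wt{\K}}(\L)^F$. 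In particular, the central character $\omega_{\wt{\lambda}}$ of $\wt{\lambda}$ is well-defined on $\z(\wt{\K}^F)$.

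For the right-hand side, I would apply the induction formula directly. Setting $M := \n_{\wt{\K}}(\L)^F$ and $M_{\wt{\lambda}} := \n_{\wt{\K}}(\wt{\L},\wt{\lambda})^F$, for $z \in \z(\wt{\K}^F)$ one has $x^{-1}zx = z \in M_{\wt{\lambda}}$ for every $x \in M$, so
\[
\left(\wt{\Lambda}(\wt{\lambda})\cdot\wt{\eta}\right)^M(z) = [M : M_{\wt{\lambda}}] \cdot \wt{\Lambda}(\wt{\lambda})(z)\cdot\wt{\eta}(z).
\]
Inflating $\wt{\eta}$ from $W_{\wt{\K}}(\wt{\L},\wt{\lambda})^F = M_{\wt{\lambda}}/\wt{\L}^F$, it takes value $\wt{\eta}(1)$ on $\wt{\L}^F$, and $\wt{\Lambda}(\wt{\lambda})(z) = \wt{\lambda}(z) = \omega_{\wt{\lambda}}(z)\,\wt{\lambda}(1)$. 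Hence the restriction to $\z(\wt{\K}^F)$ is a positive integer multiple of the linear character $\omega_{\wt{\lambda}}|_{\z(\wt{\K}^F)}$, and so its set of irreducible constituents is $\{\omega_{\wt{\lambda}}|_{\z(\wt{\K}^F)}\}$.

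For the left-hand side, the character $I^{\wt{\K}}_{(\wt{\L},\wt{\lambda})}(\wt{\eta})$ is by construction an irreducible constituent of $\R^{\wt{\K}}_{\wt{\L}}(\wt{\lambda})$. I would then invoke the standard fact that Deligne--Lusztig induction preserves central characters on the centre of the ambient group, i.e.\ every irreducible constituent $\chi$ of $\R^{\wt{\K}}_{\wt{\L}}(\wt{\lambda})$ satisfies $\omega_\chi(z) = \omega_{\wt{\lambda}}(z)$ for all $z \in \z(\wt{\K}^F)$. This can be justified either by appealing to Digne--Michel or Bonnaf\'e, or by using Theorem \ref{thm:Jordan decomposition for connected center case}(ii): $\wt{\lambda} \in \E(\wt{\L}^F,[s])$ implies $I^{\wt{\K}}_{(\wt{\L},\wt{\lambda})}(\wt{\eta}) \in \E(\wt{\K}^F,[s])$, and the central character of any element of a rational Lusztig series on $\z(\wt{\K}^F)$ is controlled by $s$ via the dual pairing, compatibly with the inclusion $\z(\wt{\K})^F \leq \z(\wt{\L})^F$. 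In either formulation this yields $\{\omega_{\wt{\lambda}}|_{\z(\wt{\K}^F)}\}$ as the set of constituents of the restriction of the left-hand side.

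The two sides therefore agree, and the only real obstacle is locating or re-deriving the clean ``central-character preservation'' statement for $\R^{\wt{\K}}_{\wt{\L}}$ on $\z(\wt{\K}^F)$; everything else is a direct computation with the induction formula and basic properties of Levi subgroups.
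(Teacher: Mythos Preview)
Your proposal is correct and follows essentially the same approach as the paper: both sides are reduced to the central character of $\wt{\lambda}$ on $\z(\wt{\K}^F)$, with the local side handled by Clifford theory (your explicit induction-formula computation is the same argument unpacked) and the global side by the fact that Deligne--Lusztig induction preserves central characters. The reference you are looking for is \cite[Proposition 12.2 (i)]{Dig-Mic91}, which gives directly $\R^{\wt{\K}}_{\wt{\L}}(\wt{\lambda})_{\z(\wt{\K}^F)}=\R^{\wt{\K}}_{\wt{\L}}(\wt{\lambda})(1)\cdot \wt{\lambda}_{\z(\wt{\K}^F)}$; your alternative route via Lusztig series is valid but unnecessary once this formula is in hand.
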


\begin{proof}
First, by Clifford theory we deduce that
\begin{equation}
\label{eq:Bijection for connected center, restriction to center 1}
\irr\left(\left(\left(\wt{\Lambda}\left(\wt{\lambda}\right)\cdot \wt{\eta}\right)^{\n_{\wt{\K}}(\L)^F}\right)_{\z\left(\wt{\G}^F\right)}\right)=\irr\left(\wt{\lambda}_{\z(\wt{\G}^F)}\right).
\end{equation}
On the other hand, by using the character formula \cite[Proposition 12.2 (i)]{Dig-Mic91}, we obtain
\[\R^{\wt{\K}}_{\wt{\L}}(\wt{\lambda})_{\z\left(\wt{\K}^F\right)}=\R^{\wt{\K}}_{\wt{\L}}(\wt{\lambda})(1)\cdot \wt{\lambda}_{\z\left(\wt{\K}^F\right)}\]
and hence
\begin{equation}
\label{eq:Bijection for connected center, restriction to center 2}
\irr\left(I^{\wt{\K}}_{(\wt{\L},\wt{\lambda})}\left(\wt{\eta}\right)_{\z(\wt{\K}^F)}\right)=\irr\left(\wt{\lambda}_{\z(\wt{\K}^F)}\right).
\end{equation}
Now the result follows by combining \eqref{eq:Bijection for connected center, restriction to center 1} with \eqref{eq:Bijection for connected center, restriction to center 2}.
\end{proof}

Next, we show that the bijection $\Upsilon_{(\wt{\L},\wt{\lambda})}^{\wt{\K}}$ from \eqref{eq:Bijection for connected center} is compatible with block induction.

\begin{lem}
\label{lem:Bijection for connected center, block induction}
Assume Hypothesis \ref{hyp:Brauer--Lusztig blocks}, Hypothesis \ref{hyp:e-Harish-Chandra theory} and suppose there exists an extension map $\wt{\Lambda}$ for $\cusp{\wt{\L}^F}$ with respect to $\wt{\L}^F\unlhd \n_{\wt{\K}}(\L)^F$. Then
\[\bl\left(\wt{\chi}\right)=\bl\left(\Upsilon_{(\wt{\L},\wt{\lambda})}^{\wt{\K}}\left(\wt{\chi}\right)\right)^{\wt{\K}^F}\]
for every $\wt{\chi}\in\E(\wt{\K}^F,(\wt{\L},\wt{\lambda}))$.
\end{lem}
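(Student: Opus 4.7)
The proof has two parts. First I would identify $\bl(\wt{\chi})$ using the Brauer--Lusztig description of $e$-Harish-Chandra series in groups with connected centre. Then I would show that block induction from $\n_{\wt{\K}}(\L)^F$ produces the same block as the one attached to $\wt{\chi}$.

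For the first part, observe that $\wt{\K}=\K\z(\wt{\G})$ is an $F$-stable Levi subgroup of $\wt{\G}$ with connected centre, and $[\wt{\K},\wt{\K}]=[\K,\K]$ is simple not of type $\mathbf{E}_6$, $\mathbf{E}_7$ or $\mathbf{E}_8$. Therefore $\wt{\K}$ satisfies the analogue of Hypothesis \ref{hyp:e-Harish-Chandra theory, no regular embedding}, and combined with Hypothesis \ref{hyp:Brauer--Lusztig blocks} and \cite[Theorem A]{Ros-Generalized_HC_theory_for_Dade} this yields
\[\E\left(\wt{\K}^F,(\wt{\L},\wt{\lambda})\right)\subseteq \E\left(\wt{\K}^F,\bl(\wt{\lambda})^{\wt{\K}^F},[s]\right),\]
for a semisimple element $s\in\wt{\L}^{*F^*}$ with $\wt{\lambda}\in\E(\wt{\L}^F,[s])$. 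In particular $\bl(\wt{\chi})=\bl(\wt{\lambda})^{\wt{\K}^F}$.

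For the second part, set $\wt{\psi}:=\Upsilon_{(\wt{\L},\wt{\lambda})}^{\wt{\K}}(\wt{\chi})$. By the construction given in \eqref{eq:Bijection for connected center}, we have $\wt{\psi}\in\irr(\n_{\wt{\K}}(\L)^F\mid \wt{\lambda})$, and hence $\bl(\wt{\psi})$ covers $\bl(\wt{\lambda})$ in the normal subgroup $\wt{\L}^F\unlhd \n_{\wt{\K}}(\L)^F$. Because $\wt{\L}$ is an $e$-split Levi subgroup of $\wt{\K}$, a defect group $D$ of $\bl(\wt{\lambda})$ is contained in $\wt{\L}^F$ and satisfies $\c_{\wt{\K}^F}(D)\leq \wt{\L}^F$, so the induced block $\bl(\wt{\psi})^{\wt{\K}^F}$ is defined. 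Standard transitivity of block induction, together with its compatibility with Clifford covering, then forces $\bl(\wt{\psi})^{\wt{\K}^F}=\bl(\wt{\lambda})^{\wt{\K}^F}$, and the claimed equality follows by combining with the first part.

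The hard part will be verifying the well-definedness of $\bl(\wt{\psi})^{\wt{\K}^F}$ and establishing the equality $\bl(\wt{\psi})^{\wt{\K}^F}=\bl(\wt{\lambda})^{\wt{\K}^F}$ in the required generality. This amounts to a defect-group centralizer computation for $e$-split Levi subgroups and a careful application of the Brauer correspondence; both should follow from the block-theoretic compatibility results systematically developed in \cite[Section 4]{Ros-Generalized_HC_theory_for_Dade}, used in combination with the construction of $\Upsilon$ in \eqref{eq:Bijection for connected center}.
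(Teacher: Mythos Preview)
Your approach is essentially the paper's: show $\bl(\wt{\chi})=\bl(\wt{\lambda})^{\wt{\K}^F}$, then show $\bl(\wt{\psi})^{\wt{\K}^F}=\bl(\wt{\lambda})^{\wt{\K}^F}$ and combine via transitivity of block induction. The paper compresses this into two citations: \cite[Proposition 4.9]{Ros-Generalized_HC_theory_for_Dade} for the first equality and \cite[Lemma 6.5]{Ros-Generalized_HC_theory_for_Dade} (which gives $\bl(\wt{\psi})=\bl(\wt{\lambda})^{\n_{\wt{\K}}(\L)^F}$ directly, via $\wt{\L}^F$-regularity of blocks of the normaliser) for the second.

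One factual slip to correct: the assertion that $[\wt{\K},\wt{\K}]=[\K,\K]$ is \emph{simple} is false in general. A Levi subgroup $\K$ of a simple group $\G$ typically has $[\K,\K]$ a product of several simple factors, so $\wt{\K}$ need not satisfy Hypothesis~\ref{hyp:e-Harish-Chandra theory, no regular embedding}. This does not matter for the argument, because the input you actually need for the first part (the containment of the $e$-Harish-Chandra series in a single Brauer--Lusztig block, hence $\bl(\wt{\chi})=\bl(\wt{\lambda})^{\wt{\K}^F}$) is \cite[Proposition 4.9]{Ros-Generalized_HC_theory_for_Dade}, which operates under Hypothesis~\ref{hyp:Brauer--Lusztig blocks} and does not require the derived subgroup to be simple. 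For the second part, your defect-group/centraliser heuristic is correct in spirit; the precise statement you want is that every block of $\n_{\wt{\K}}(\L)^F$ is $\wt{\L}^F$-regular, which is exactly \cite[Lemma 6.5]{Ros-Generalized_HC_theory_for_Dade} and yields $\bl(\wt{\psi})=\bl(\wt{\lambda})^{\n_{\wt{\K}}(\L)^F}$ without further work.
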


\begin{proof}
Since $\bl(\wt{\lambda})^{\wt{\K}^F}=\bl(\wt{\chi})$ by \cite[Proposition 4.9]{Ros-Generalized_HC_theory_for_Dade} and $\bl(\Upsilon_{(\wt{\L},\wt{\lambda})}^{\wt{\K}}(\wt{\chi}))=\bl(\wt{\lambda})^{\n_{\wt{\K}}(\L)^F}$ by \cite[Lemma 6.5]{Ros-Generalized_HC_theory_for_Dade}, the result follows from the transitivity of block induction.
\end{proof}

Finally, we show that the bijection $\Upsilon_{(\wt{\L},\wt{\lambda})}^{\wt{\K}}$ from \eqref{eq:Bijection for connected center} is equivariant.

\begin{lem}
\label{lem:Bijection for connected center, equivariance}
Assume Hypothesis \ref{hyp:e-Harish-Chandra theory} and suppose there exists a $(\wt{\G}^F\mathcal{A})_{\K,\L}\ltimes \mathcal{K}$-equivariant extension map $\wt{\Lambda}$ for $\cusp{\wt{\L}^F}$ with respect to $\wt{\L}^F\unlhd \n_{\wt{\K}}(\L)^F$. Then $\Upsilon_{(\wt{\L},\wt{\lambda})}^{\wt{\K}}$ is $\left(\left(\wt{\G}^F\mathcal{A}\right)_{\K,\L}\ltimes \mathcal{K}\right)_{\wt{\lambda}}$-equivariant.
\end{lem}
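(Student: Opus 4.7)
The plan is to verify the desired equivariance pointwise by expanding both $\Upsilon^{\wt{\K}}_{(\wt{\L},\wt{\lambda})}(\wt{\chi})^{xz}$ and $\Upsilon^{\wt{\K}}_{(\wt{\L},\wt{\lambda})}(\wt{\chi}^{xz})$ for a generic $\wt{\chi} = I^{\wt{\K}}_{(\wt{\L},\wt{\lambda})}(\wt{\eta})$ and a generic element $xz$ in $\left(\left(\wt{\G}^F\mathcal{A}\right)_{\K,\L}\ltimes \mathcal{K}\right)_{\wt{\lambda}}$. The stabilizer condition on $xz$ unwinds to the identity $\wt{\lambda}^x = \wt{\lambda}\cdot\wh{z}_{\wt{\L}}^{-1}$, which will drive the subsequent calculations. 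To compute $\Upsilon^{\wt{\K}}_{(\wt{\L},\wt{\lambda})}(\wt{\chi}^{xz})$ I would first apply the argument of Lemma \ref{lem:e-Harisch-Chandra parametrization for connected center, equivariant} without the stabilizer restriction on $\wt{\lambda}$, an immediate rewriting of that proof using the full strength of Theorem \ref{thm:Jordan decomposition for connected center case}(i) and Corollary \ref{cor:Frattini isomorphism for connected center}, to obtain $\wt{\chi}^x = I^{\wt{\K}}_{(\wt{\L},\wt{\lambda}^x)}(\wt{\eta}^x)$. Multiplying by $\wh{z}_{\wt{\K}}$ and invoking Lemma \ref{lem:e-Harisch-Chandra parametrization for connected center, central} would then yield $\wt{\chi}^{xz} = I^{\wt{\K}}_{(\wt{\L},\wt{\lambda}^x\cdot\wh{z}_{\wt{\L}})}(\wt{\eta}^x) = I^{\wt{\K}}_{(\wt{\L},\wt{\lambda})}(\wt{\eta}^x)$, so that $\Upsilon^{\wt{\K}}_{(\wt{\L},\wt{\lambda})}(\wt{\chi}^{xz}) = (\wt{\Lambda}(\wt{\lambda})\cdot\wt{\eta}^x)^{\n_{\wt{\K}}(\L)^F}$.

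On the other side I would begin by checking that the inducing subgroup $M := \n_{\wt{\K}}(\wt{\L},\wt{\lambda})^F$ is $x$-stable: since $\wh{z}_{\wt{\L}}$ is the restriction to $\wt{\L}^F$ of the linear character $\wh{z}_{\wt{\K}}$ of $\wt{\K}^F$, it is invariant under $\n_{\wt{\K}}(\wt{\L})^F$-conjugation, so $\n_{\wt{\K}}(\wt{\L},\wt{\lambda}^x)^F = M$; moreover $x$ normalizes both $\wt{\L}$ and $\L$, hence normalizes $\n_{\wt{\K}}(\wt{\L})^F = \n_{\wt{\K}}(\L)^F$. Consequently the $x$-action commutes with induction from $M$ to $\n_{\wt{\K}}(\L)^F$, while the subsequent $z$-twist by $\wh{z}_{\n_{\wt{\K}}(\L)}$ can be pulled inside the induction via the projection formula as multiplication by the restriction $\wh{z}_M$ to $M$. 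This rewrites $\Upsilon^{\wt{\K}}_{(\wt{\L},\wt{\lambda})}(\wt{\chi})^{xz}$ in the form $\left((\wt{\Lambda}(\wt{\lambda})^x\cdot\wh{z}_M)\cdot\wt{\eta}^x\right)^{\n_{\wt{\K}}(\L)^F}$.

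The decisive step is then to invoke the joint equivariance of $\wt{\Lambda}$ under $\left(\wt{\G}^F\mathcal{A}\right)_{\K,\L}\ltimes\mathcal{K}$, which unfolds to $\wt{\Lambda}(\wt{\lambda})^x\cdot\wh{z}_M = \wt{\Lambda}(\wt{\lambda})^{xz} = \wt{\Lambda}(\wt{\lambda}^{xz}) = \wt{\Lambda}(\wt{\lambda})$, after which the two expressions match. The main obstacle is the bookkeeping of the $\mathcal{K}$-factor, whose action is by twisting with linear characters rather than by automorphisms, both at the level of the $e$-Harish-Chandra parametrization (where it is controlled by Theorem \ref{thm:Main e-Harish-Chandra parametrization for connected center}(iii)) and at the level of the induction step (where it is controlled by the projection formula); the requirement that $\wt{\Lambda}$ be equivariant for the full semidirect product rather than separately for each factor is exactly what allows the correction $\wh{z}_M$ coming from the $z$-twist to cancel the displacement $\wt{\lambda}\mapsto\wt{\lambda}^x$ produced by $x$.
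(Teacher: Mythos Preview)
Your proposal is correct and follows essentially the same approach as the paper: both compute $\Upsilon_{(\wt{\L},\wt{\lambda})}^{\wt{\K}}(\wt{\chi})^{xz}$ and $\Upsilon_{(\wt{\L},\wt{\lambda})}^{\wt{\K}}(\wt{\chi}^{xz})$ separately, reducing each to $(\wt{\Lambda}(\wt{\lambda})\cdot\wt{\eta}^x)^{\n_{\wt{\K}}(\L)^F}$ via the equivariance of $\wt{\Lambda}$ on one side and the combined covariance of $I^{\wt{\K}}_{(\wt{\L},\wt{\lambda})}$ (Lemmas \ref{lem:e-Harisch-Chandra parametrization for connected center, equivariant} and \ref{lem:e-Harisch-Chandra parametrization for connected center, central}) on the other. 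Your explicit remark that the covariance $I^{\wt{\K}}_{(\wt{\L},\wt{\lambda})}(\wt{\eta})^x = I^{\wt{\K}}_{(\wt{\L},\wt{\lambda}^x)}(\wt{\eta}^x)$ for $x$ not fixing $\wt{\lambda}$ requires a mild extension of Lemma \ref{lem:e-Harisch-Chandra parametrization for connected center, equivariant} is a useful clarification that the paper leaves implicit.
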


\begin{proof}
Let $(x,z)\in\left(\left(\wt{\G}^F\mathcal{A}\right)_{\K,\L}\ltimes \mathcal{K}\right)_{\wt{\lambda}}$. Since $\wt{\lambda}=\wt{\lambda}^x\cdot \wh{z}_{\wt{\L}}$, we have
\[\n_{\wt{\K}}(\L,\wt{\lambda})^F=\n_{\wt{\K}}(\L,\wt{\lambda}^x\cdot \wh{z}_{\wt{\L}})^F=\n_{\wt{\K}}(\L,\wt{\lambda}^x)^F.\]
By using the equivariance properties of $\wt{\Lambda}$, we obtain
\begin{align}
\left(\left(\wt{\Lambda}\left(\wt{\lambda}\right)\cdot \wt{\eta}\right)^{\n_{\wt{\K}}(\L)^F}\right)^{(x,z)}&=\left(\left(\wt{\Lambda}\left(\wt{\lambda}\right)\cdot \wt{\eta}\right)^x\right)^{\n_{\wt{\K}}(\L)^F}\cdot \wh{z}_{\n_{\wt{\K}}(\L)} \nonumber
\\
&=\left(\left(\wt{\Lambda}\left(\wt{\lambda}\right)\cdot \wt{\eta}\right)^x\cdot \wh{z}_{\n_{\wt{\K}}(\L,\wt{\lambda})^F}\right)^{\n_{\wt{\K}}(\L)^F} \label{eq:Bijection for connected center, equivariance 1}
\\
&=\left(\wt{\Lambda}\left(\wt{\lambda}^x\cdot \wh{z}_{\wt{\L}} \right)\cdot \wt{\eta}^x\right)^{\n_{\wt{\K}}(\L)^F} \nonumber
\\
&=\left(\wt{\Lambda}\left(\wt{\lambda}\right)\cdot \wt{\eta}^x\right)^{\n_{\wt{\K}}(\L)^F}. \nonumber
\end{align}
On the other hand, considering Lemma \ref{lem:e-Harisch-Chandra parametrization for connected center, equivariant} and Lemma \ref{lem:e-Harisch-Chandra parametrization for connected center, central} with respect to $\wt{\G}$ it follows that
\begin{align}
I^{\wt{\K}}_{(\wt{\L},\wt{\lambda})}\left(\wt{\eta}\right)^{(x,z)}&=I^{\wt{\K}}_{(\wt{\L},\wt{\lambda})}\left(\wt{\eta}\right)^x\cdot \wh{z}_{\wt{\K}}\nonumber
\\
&=I^{\wt{\K}}_{(\wt{\L},\wt{\lambda}^x\cdot \wh{z}_{\wt{\L}})}\left(\wt{\eta}^x\right) \label{eq:Bijection for connected center, equivariance 2}
\\
&=I^{\wt{\K}}_{(\wt{\L},\wt{\lambda})}\left(\wt{\eta}^x\right). \nonumber
\end{align}
Now, the result follows immediately from \eqref{eq:Bijection for connected center, equivariance 1} and \eqref{eq:Bijection for connected center, equivariance 2}.
\end{proof}

\subsection{$e$-Harish-Chandra series and regular embeddings}
\label{subsec:e-HC series and regular embeddings}

We use the results obtained in the previous two subsections in order to obtain the bijections needed in the criteria we prove in Section \ref{sec:Criteria} (see Theorem \ref{thm:Criterion, central} and Theorem \ref{thm:Criterion}).

To start, we study the behaviour of $e$-Harish-Chandra series with respect to the regular embedding $i:\G\to \wt{\G}$. Fix an $F$-stable Levi subgroup $\K$ of $\G$ and an $e$-cuspidal pair $(\L,\lambda)$ of $\K$. Observe that $\wt{\K}:=\K\z(\wt{\G})$ is an $F$-stable Levi subgroup of $\wt{\G}$ and that $(\wt{\L},\wt{\lambda})$ is an $e$-cuspidal pair of $\wt{\K}$ for every $\wt{\lambda}\in\irr(\wt{\L}^F\mid \lambda)$ and where $\wt{\L}:=\L\z(\wt{\G})$ (see \cite[Proposition 10.10]{Bon06}).

\begin{defin}
\label{def:e-HC and regular embedding}
Let $\HC(\wt{\K}^F,(\L,\lambda))$ be the set of $e$-Harish-Chandra series $\E(\wt{\K}^F,(\wt{\L},\wt{\lambda}))$ with $\wt{\lambda}\in\irr(\wt{\L}^F\mid \lambda)$. The group $\mathcal{K}$ from Section \ref{sec:Regular embeddings} acts on the set $\HC(\wt{\K}^F,(\L,\lambda))$ via
\[\E\left(\wt{\K}^F,\left(\wt{\L},\wt{\lambda}\right)\right)^z:=\E\left(\wt{\K}^F,\left(\wt{\L},\wt{\lambda}\cdot \wh{z}_{\wt{\L}}\right)\right)\]
for every $\E(\wt{\K}^F,(\wt{\L},\wt{\lambda}))\in\HC(\wt{\K}^F,(\L,\lambda))$, $z\in \mathcal{K}$ and where $\wh{z}_{\wt{\L}}$ corresponds to $z$ via \eqref{central kernel and linear characters}. Here notice that, as $\lambda$ is $e$-cuspidal, then so are $\wt{\lambda}$ and $\wt{\lambda}\cdot \wh{z}_{\wt{\L}}$ (see \cite[Proposition 10.10 and Proposition 10.11]{Bon06}). Moreover, if we define $\E(\wt{\K}^F,(\wt{\L},\wt{\lambda}))\cdot \wh{z}_{\wt{\K}}$ to be the set of characters $\wt{\chi}\cdot \wh{z}_{\wt{\K}}$ for $\wt{\chi}\in\E(\wt{\K}^F,(\wt{\L},\wt{\lambda}))$, then 
\[\E(\wt{\K}^F,(\wt{\L},\wt{\lambda}))^z=\E(\wt{\K}^F,(\wt{\L},\wt{\lambda}))\cdot \wh{z}_{\wt{\K}}\]
by \cite[Proposition 10.11]{Bon06}.
\end{defin}

We want to compare the action of $\mathcal{K}$ on $\HC(\wt{\K}^F,(\L,\lambda))$ with the action of $\mathcal{K}$ on the set of characters $\irr(\wt{\L}^F\mid \lambda)$. First, observe that \cite[Problem 6.2]{Isa76} implies that both actions are transitive.

\begin{lem}
\label{lem:Covering Brauer--Lusztig blocks and cuspidality with actions}
Assume Hypothesis \ref{hyp:Brauer--Lusztig blocks} and let $\wt{\lambda}_i\in\irr(\wt{\L}^F\mid\lambda)$ for $i=1,2$. Let $z\in \mathcal{K}$, then
\[\E\left(\wt{\K}^F,\left(\wt{\L},\wt{\lambda}_1\right)\right)=\E\left(\wt{\K}^F,\left(\wt{\L},\wt{\lambda}_2\right)\right)^z\]
if and only if
\[\wt{\lambda}_1=\wt{\lambda}_2^x\cdot \wh{z}_{\wt{\L}}\]
for some $x\in\n_{\wt{\K}}(\L,\lambda)^F$. 
\end{lem}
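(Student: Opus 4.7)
The plan is to reduce the statement to the disjointness of $e$-Harish-Chandra series in the connected-centre group $\wt{\K}$, and then adjust the conjugating element by Clifford theory so that it fixes $\lambda$. First I would unfold Definition \ref{def:e-HC and regular embedding} to rewrite the right-hand side as
\[\E\left(\wt{\K}^F,\left(\wt{\L},\wt{\lambda}_2\right)\right)^z=\E\left(\wt{\K}^F,\left(\wt{\L},\wt{\lambda}_2\cdot \wh{z}_{\wt{\L}}\right)\right),\]
noting that $\wt{\lambda}_2\cdot \wh{z}_{\wt{\L}}$ still lies over $\lambda$ since $\wh{z}_{\wt{\L}}$ is trivial on $\L^F$. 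The question thus becomes: when do the $e$-Harish-Chandra series attached to $(\wt{\L},\wt{\lambda}_1)$ and $(\wt{\L},\wt{\lambda}_2\cdot \wh{z}_{\wt{\L}})$ coincide?

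Because $\wt{\K}$ has connected centre and $[\wt{\K},\wt{\K}]=[\K,\K]$, Theorem \ref{thm:Main e-Harish-Chandra parametrization for connected center} applies to $\wt{\G}$ and every Levi subgroup in it, so that distinct $\wt{\K}^F$-orbits of $e$-cuspidal pairs produce disjoint series. Consequently, the equality of series is equivalent to the existence of $y\in \n_{\wt{\K}}(\wt{\L})^F$ with $(\wt{\lambda}_2\cdot \wh{z}_{\wt{\L}})^y=\wt{\lambda}_1$; moreover $\n_{\wt{\K}}(\wt{\L})^F=\n_{\wt{\K}}(\L)^F$ because $\wt{\L}=\L\z(\wt{\G})$. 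I would then verify that $\wh{z}_{\wt{\L}}^y=\wh{z}_{\wt{\L}}$: the character $\wh{z}_{\wt{\L}}$ factors through $\wt{\L}^F/\L^F$, and for any $y\in\wt{\K}^F$ and $v\in\wt{\L}^F$ the commutator $yvy^{-1}v^{-1}$ belongs to $[\wt{\K}^F,\wt{\K}^F]\leq \K^F\cap \wt{\L}^F=\L^F$, so conjugation by $y$ is trivial on $\wt{\L}^F/\L^F$. Hence the equality of series is equivalent to the existence of $y\in\n_{\wt{\K}}(\L)^F$ with $\wt{\lambda}_1=\wt{\lambda}_2^y\cdot\wh{z}_{\wt{\L}}$.

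To finish the forward direction, I would upgrade such a $y$ to lie in $\n_{\wt{\K}}(\L,\lambda)^F$. Since $\wt{\lambda}_2^y\cdot\wh{z}_{\wt{\L}}=\wt{\lambda}_1$ lies over $\lambda$ and $\wh{z}_{\wt{\L}}$ is trivial on $\L^F$, the character $\wt{\lambda}_2^y$ also lies over $\lambda$; but $\wt{\lambda}_2^y$ lies over $\lambda^y$, so Clifford theory applied to $\L^F\unlhd \wt{\L}^F$ shows that $\lambda^y$ and $\lambda$ are $\wt{\L}^F$-conjugate, say $\lambda^y=\lambda^u$ with $u\in\wt{\L}^F$. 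Setting $x:=yu^{-1}$ and using the standard convention $\chi^{gh}=(\chi^g)^h$, we obtain $\lambda^x=(\lambda^y)^{u^{-1}}=(\lambda^u)^{u^{-1}}=\lambda$, so $x\in \n_{\wt{\K}}(\L,\lambda)^F$. Since $u^{-1}\in \wt{\L}^F$ acts trivially on any character of $\wt{\L}^F$, we have $\wt{\lambda}_2^x=\wt{\lambda}_2^y$ and therefore $\wt{\lambda}_1=\wt{\lambda}_2^x\cdot\wh{z}_{\wt{\L}}$, as required. The converse is immediate: given $x\in\n_{\wt{\K}}(\L,\lambda)^F$ with $\wt{\lambda}_1=\wt{\lambda}_2^x\cdot\wh{z}_{\wt{\L}}$, the invariance of $\wh{z}_{\wt{\L}}$ under $\wt{\K}^F$ yields $\wt{\lambda}_1=(\wt{\lambda}_2\cdot\wh{z}_{\wt{\L}})^x$, so the pairs $(\wt{\L},\wt{\lambda}_1)$ and $(\wt{\L},\wt{\lambda}_2\cdot\wh{z}_{\wt{\L}})$ are $\wt{\K}^F$-conjugate and the associated $e$-Harish-Chandra series coincide.

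The principal obstacle here is the disjointness input, which relies crucially on having the parametrization of $e$-Harish-Chandra series of Section \ref{sec:e-Harisch_chandra for connected center} available for $\wt{\K}$; the remaining steps are a short computation exploiting the $\wt{\K}^F$-invariance of $\wh{z}_{\wt{\L}}$ together with a routine Clifford-theoretic adjustment of the conjugating element.
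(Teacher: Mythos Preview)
Your argument follows the same two-step structure as the paper's proof: first deduce that the two $e$-cuspidal pairs are $\wt{\K}^F$-conjugate, then adjust the conjugating element into $\n_{\wt{\K}}(\L,\lambda)^F$ via Clifford theory for $\L^F\unlhd\wt{\L}^F$. The Clifford-theoretic step and the converse are handled exactly as in the paper (the invariance $\wh{z}_{\wt{\L}}^{\,y}=\wh{z}_{\wt{\L}}$ is immediate since $\wh{z}_{\wt{\L}}$ is the restriction of the linear character $\wh{z}_{\wt{\G}}\in\irr(\wt{\G}^F)$).

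The one substantive discrepancy is your justification of the conjugacy step. The lemma is stated only under Hypothesis~\ref{hyp:Brauer--Lusztig blocks}, and the paper obtains the conjugacy of $(\wt{\L},\wt{\lambda}_1)$ and $(\wt{\L},\wt{\lambda}_2\cdot\wh{z}_{\wt{\L}})$ directly from \cite[Proposition~4.10]{Ros-Generalized_HC_theory_for_Dade}, which is available in that generality. You instead appeal to Theorem~\ref{thm:Main e-Harish-Chandra parametrization for connected center}, but that result requires Hypothesis~\ref{hyp:e-Harish-Chandra theory, no regular embedding} (in particular $[\wt{\G},\wt{\G}]$ simple and not of type $\mathbf{E}_6$, $\mathbf{E}_7$, $\mathbf{E}_8$), which is not part of the present hypotheses, and moreover its statement gives parametrizations rather than the disjointness you need. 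Replacing your citation by \cite[Proposition~4.10]{Ros-Generalized_HC_theory_for_Dade} removes this gap and makes your proof coincide with the paper's.
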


\begin{proof}
First, assume $\E(\wt{\K}^F,(\wt{\L},\wt{\lambda}_1))=\E(\wt{\K}^F,(\wt{\L},\wt{\lambda}_2))^z$. By \cite[Proposition 4.10]{Ros-Generalized_HC_theory_for_Dade}, there exists $u\in \wt{\K}^F$ such that $(\wt{\L},\wt{\lambda}_1)=(\wt{\L},\wt{\lambda}_2\cdot \wh{z}_{\wt{\L}})^u$. This implies that $u\in \n_{\wt{\K}}(\L)^F$ and that $\wt{\lambda}_1=\wt{\lambda}_2^u\cdot \wh{z}_{\wt{\L}}$. Moreover, since $\wt{\lambda}_1$ lies over both $\lambda$ and $\lambda^u$, it follows from Clifford's theorem that $\lambda=\lambda^{uv}$, for some $v\in \wt{\L}^F$. Then $x:=uv\in\n_{\wt{\K}}(\L,\lambda)^F$ and $\wt{\lambda}_1=\wt{\lambda}_2^x\cdot \wh{z}_{\wt{\L}}$. Conversely, if $\wt{\lambda}_1=\wt{\lambda}_2^x\cdot \wh{z}_{\wt{\L}}$ for some $x\in\n_{\wt{\K}}(\L,\lambda)^F$, then \cite[Proposition 10.11]{Bon06} yields the desired equality.
\end{proof}

\begin{cor}
\label{cor:Covering Brauer--Lusztig blocks and cuspidality with actions}
Assume Hypothesis \ref{hyp:Brauer--Lusztig blocks} and consider $\wt{\lambda}\in\irr(\wt{\L}^F\mid \lambda)$. Then
\[\mathcal{K}_{\E(\wt{\K}^F,(\wt{\L},\wt{\lambda}))}\leq \n_{\wt{\K}}(\L,\lambda)^F(\n_{\wt{\K}}(\L,\lambda)^F\ltimes\mathcal{K})_{\wt{\lambda}}\]
where $\mathcal{K}_{\E(\wt{\K}^F,(\wt{\L},\wt{\lambda}))}$ denotes the stabilizer of $\E(\wt{\K}^F,(\wt{\L},\wt{\lambda}))$ under the action of $\mathcal{K}$ on $\HC(\wt{\K}^F,(\L,\lambda))$.
\end{cor}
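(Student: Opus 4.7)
My plan is to deduce this corollary directly from Lemma \ref{lem:Covering Brauer--Lusztig blocks and cuspidality with actions}, which already records the essential equivalence; the corollary is really just a repackaging in the language of stabilizers inside the semidirect product.

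The first step is to spell out the interpretation of the stated containment, since the two sides do not literally live in a common ambient group. Because $\n_{\wt{\K}}(\L,\lambda)^F$ is contained in $(\wt{\G}^F\mathcal{A})_\L$, the expression $\n_{\wt{\K}}(\L,\lambda)^F\ltimes\mathcal{K}$ makes sense as a subgroup of the semidirect product $(\wt{\G}^F\mathcal{A})_\L\ltimes\mathcal{K}$ defined in Section \ref{sec:Equivariant maximal extendility}. I will view $\mathcal{K}_{\E(\wt{\K}^F,(\wt{\L},\wt{\lambda}))}$ as a subgroup of this ambient group through $z\mapsto(1,z)$, and $\n_{\wt{\K}}(\L,\lambda)^F$ through $x\mapsto(x,1)$; the claimed inclusion then becomes an inclusion of subsets of $\n_{\wt{\K}}(\L,\lambda)^F\ltimes\mathcal{K}$. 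I would also note that, since elements of $\wt{\G}^F$ act trivially by conjugation on the abelian quotient $\wt{\G}^F/\G^F$ and hence on $\mathcal{K}$, this particular semidirect product is in fact a direct product, which will make the multiplication below immediate.

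Next, I pick an arbitrary $z\in\mathcal{K}_{\E(\wt{\K}^F,(\wt{\L},\wt{\lambda}))}$. By the definition of the $\mathcal{K}$-action on $\HC(\wt{\K}^F,(\L,\lambda))$ given in Definition \ref{def:e-HC and regular embedding}, the condition that $z$ stabilizes the series means $\E(\wt{\K}^F,(\wt{\L},\wt{\lambda}))=\E(\wt{\K}^F,(\wt{\L},\wt{\lambda}\cdot\wh{z}_{\wt{\L}}))$. Applying Lemma \ref{lem:Covering Brauer--Lusztig blocks and cuspidality with actions} with $\wt{\lambda}_1=\wt{\lambda}_2=\wt{\lambda}$, which is legitimate since $\wt{\lambda}\in\irr(\wt{\L}^F\mid\lambda)$ by hypothesis, this equality produces an element $x\in\n_{\wt{\K}}(\L,\lambda)^F$ with $\wt{\lambda}^x\cdot\wh{z}_{\wt{\L}}=\wt{\lambda}$. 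Recalling the action of $(\wt{\G}^F\mathcal{A})_\L\ltimes\mathcal{K}$ on $\irr(\wt{\L}^F)$ introduced in Section \ref{sec:Equivariant maximal extendility}, this equation is exactly the assertion that $(x,z)\in(\n_{\wt{\K}}(\L,\lambda)^F\ltimes\mathcal{K})_{\wt{\lambda}}$.

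Finally, the identity $(1,z)=(x^{-1},1)\cdot(x,z)$ exhibits the image of $z$ as the product of an element of $\n_{\wt{\K}}(\L,\lambda)^F$ and an element of $(\n_{\wt{\K}}(\L,\lambda)^F\ltimes\mathcal{K})_{\wt{\lambda}}$, which gives the desired containment. I do not foresee any real obstacle, as all the analytic content has already been carried by Lemma \ref{lem:Covering Brauer--Lusztig blocks and cuspidality with actions}; only elementary group-theoretic bookkeeping inside the semidirect product remains.
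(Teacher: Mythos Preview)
Your proposal is correct and follows essentially the same argument as the paper: pick $z$ in the stabilizer, apply Lemma \ref{lem:Covering Brauer--Lusztig blocks and cuspidality with actions} to obtain $x\in\n_{\wt{\K}}(\L,\lambda)^F$ with $\wt{\lambda}^x\cdot\wh{z}_{\wt{\L}}=\wt{\lambda}$, and then factor $z=x^{-1}\cdot xz$ inside the semidirect product. Your additional discussion of where the inclusion takes place and the remark that the semidirect product is in fact direct are helpful clarifications, but the underlying proof is identical to the paper's.
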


\begin{proof}
Let $z\in\mathcal{K}$ stabilize $\E(\wt{\K}^F,(\wt{\L},\wt{\lambda}))$. By Lemma \ref{lem:Covering Brauer--Lusztig blocks and cuspidality with actions} there exists $x\in\n_{\wt{\K}}(\L,\lambda)^F$ such that $\wt{\lambda}=\wt{\lambda}^x\cdot \wh{z}_{\wt{\L}}$ and hence $z=x^{-1}xz\in\n_{\wt{\K}}(\L,\lambda)^F(\n_{\wt{\K}}(\L,\lambda)^F\ltimes\mathcal{K})_{\wt{\lambda}}$.
\end{proof}

For every finite group $X$ with subgroup $Y\leq X$ and every subset of characters $\mathcal{Y}\subseteq \irr(Y)$, we denote by $\irr(X\mid \mathcal{Y})$ the set of characters $\chi\in\irr(X)$ lying over some character $\psi\in\mathcal{Y}$.

\begin{prop}
\label{prop:Covering Brauer--Lusztig blocks and cuspidality with actions, partitions}
Assume Hypothesis \ref{hyp:Brauer--Lusztig blocks} and let $\wt{\lambda}\in\irr(\wt{\L}^F\mid \lambda)$. If $\mathcal{T}$ is a transversal for the cosets of $\mathcal{K}_{\E(\wt{\K}^F,(\wt{\L},\wt{\lambda}))}$ in $\mathcal{K}$, then
\begin{equation}
\label{eq:Global partition}
\irr\left(\wt{\K}^F\enspace\middle|\enspace \E\left(\K^F,(\L,\lambda)\right)\right)=\coprod\limits_{z\in\mathcal{T}}\E\left(\wt{\K}^F,\left(\wt{\L},\wt{\lambda}\right)\right)\cdot \wh{z}_{\wt{\K}}
\end{equation}
and
\begin{equation}
\label{eq:Local partition}
\irr\left(\n_{\wt{\K}}(\L)^F\enspace\middle|\enspace\lambda\right)=\coprod\limits_{z\in\mathcal{T}}\irr\left(\n_{\wt{\K}}(\L)^F\enspace\middle|\enspace \wt{\lambda}\right)\cdot \wh{z}_{\n_{\wt{\K}}(\L)},
\end{equation}
where $\irr(\n_{\wt{\K}}(\L)^F\mid \wt{\lambda})\cdot \wh{z}_{\n_{\wt{K}}(\L)}$ is the set of characters $\wt{\psi}\cdot\wh{z}_{\n_{\wt{\K}}(\L)}$ for $\wt{\psi}\in \irr(\n_{\wt{\K}}(\L)^F\mid \wt{\lambda})$.
\end{prop}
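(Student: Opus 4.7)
Both partitions come from a common combinatorial framework. Letting $\mathcal{O}$ range over the $\n_{\wt{\K}}(\L)^F$-orbits on $\irr(\wt{\L}^F \mid \lambda)$ and fixing representatives $\wt{\mu}_{\mathcal{O}} \in \mathcal{O}$, ordinary Clifford theory gives
\[
\irr\left(\n_{\wt{\K}}(\L)^F \mid \lambda\right) = \coprod_{\mathcal{O}} \irr\left(\n_{\wt{\K}}(\L)^F \mid \wt{\mu}_{\mathcal{O}}\right),
\]
while the identity $\E(\wt{\K}^F,(\wt{\L},\wt{\mu}))\cdot\wh{z}_{\wt{\K}}=\E(\wt{\K}^F,(\wt{\L},\wt{\mu}\cdot\wh{z}_{\wt{\L}}))$ from Definition \ref{def:e-HC and regular embedding} combined with \cite[Proposition 4.10]{Ros-Generalized_HC_theory_for_Dade} will yield
\[
\irr\left(\wt{\K}^F \mid \E\left(\K^F,(\L,\lambda)\right)\right) = \coprod_{\mathcal{O}} \E\left(\wt{\K}^F, (\wt{\L}, \wt{\mu}_{\mathcal{O}})\right)
\]
once the corresponding union is shown to cover the left-hand side. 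The plan is therefore to prove that $z \mapsto \wt{\lambda}\cdot\wh{z}_{\wt{\L}}$ induces a bijection between the cosets of $S := \mathcal{K}_{\E(\wt{\K}^F,(\wt{\L},\wt{\lambda}))}$ in $\mathcal{K}$ and the $\n_{\wt{\K}}(\L)^F$-orbits on $\irr(\wt{\L}^F \mid \lambda)$; translating through the two decompositions above will then give both \eqref{eq:Global partition} and \eqref{eq:Local partition}.

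First I would establish the union on the global side, namely $\irr(\wt{\K}^F \mid \E(\K^F,(\L,\lambda))) = \bigcup_{\wt{\mu} \in \irr(\wt{\L}^F \mid \lambda)} \E(\wt{\K}^F,(\wt{\L},\wt{\mu}))$. The inclusion $\supseteq$ is Bonnaf\'e's Clifford theory for Deligne--Lusztig induction: every character in $\E(\wt{\K}^F,(\wt{\L},\wt{\mu}))$ with $\wt{\mu}$ over $\lambda$ restricts to a sum of characters in $\E(\K^F,(\L,\lambda))$, as already used in \cite[Proposition 4.9]{Ros-Generalized_HC_theory_for_Dade}. For the reverse inclusion, any $\wt{\chi} \in \irr(\wt{\K}^F \mid \E(\K^F,(\L,\lambda)))$ belongs to some $e$-Harish--Chandra series $\E(\wt{\K}^F,(\wt{\M},\wt{\nu}))$ by the partition of $\irr(\wt{\K}^F)$ into $e$-series (valid under Hypothesis \ref{hyp:Brauer--Lusztig blocks} by \cite[Theorem A]{Ros-Generalized_HC_theory_for_Dade}); the compatibility of restriction with Deligne--Lusztig induction then forces the constituents of $\wt{\chi}_{\K^F}$ to lie both in $\E(\K^F,(\M,\nu'))$ for some $\nu'$ that $\wt{\nu}$ covers and in $\E(\K^F,(\L,\lambda))$, so after $\K^F$-conjugation we may take $\M = \L$ and $\nu' = \lambda$; then $\wt{\M} = \L\z(\wt{\G}) = \wt{\L}$ and $\wt{\nu} \in \irr(\wt{\L}^F \mid \lambda)$. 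By \cite[Proposition 4.10]{Ros-Generalized_HC_theory_for_Dade}, two series $\E(\wt{\K}^F,(\wt{\L},\wt{\mu}_i))$ coincide iff the pairs are $\wt{\K}^F$-conjugate, and since $\n_{\wt{\K}}(\wt{\L}) = \n_{\wt{\K}}(\L)$ this amounts to $\n_{\wt{\K}}(\L)^F$-conjugacy of the $\wt{\mu}_i$'s.

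Next I would verify that $\mathcal{K}$ acts transitively on $\irr(\wt{\L}^F \mid \lambda)$ via $z\cdot\wt{\mu} := \wt{\mu}\cdot\wh{z}_{\wt{\L}}$, a standard Clifford/Gallagher argument using the identification $\mathcal{K} \cong \irr(\wt{\L}^F/\L^F)$. It then remains to identify the $\n_{\wt{\K}}(\L)^F$-orbits on $\irr(\wt{\L}^F \mid \lambda)$ with cosets of $S$ in $\mathcal{K}$. The key observation is that $(\wh{z}_{\wt{\L}})^x = \wh{z}_{\wt{\L}}$ for every $x \in \n_{\wt{\K}}(\L)^F$: indeed $x \in \wt{\G}^F$, and for any $\wt{\ell} \in \wt{\L}^F$ the commutator $x\wt{\ell}x^{-1}\wt{\ell}^{-1}$ lies in $[\wt{\G},\wt{\G}]^F \cap \wt{\L}^F = [\G,\G]^F \cap \wt{\L}^F \subseteq \L^F$, so conjugation by $x$ is trivial on $\wt{\L}^F/\L^F$ and hence on its character group. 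Consequently $(\wt{\lambda}\cdot\wh{z_1}_{\wt{\L}})^x = \wt{\lambda}^x \cdot \wh{z_1}_{\wt{\L}}$, and since the resulting character must lie in $\irr(\wt{\L}^F \mid \lambda)$, Clifford forces $\lambda^x$ to be $\wt{\L}^F$-conjugate to $\lambda$, so after absorbing an element of $\wt{\L}^F$ we may replace $x$ by an element of $\n_{\wt{\K}}(\L,\lambda)^F$. The condition that $\wt{\lambda}\cdot\wh{z_1}_{\wt{\L}}$ and $\wt{\lambda}\cdot\wh{z_2}_{\wt{\L}}$ are $\n_{\wt{\K}}(\L)^F$-conjugate thus reduces to the existence of $x \in \n_{\wt{\K}}(\L,\lambda)^F$ with $\wt{\lambda}^x = \wt{\lambda}\cdot\wh{z_2 z_1^{-1}}_{\wt{\L}}$, which by Lemma \ref{lem:Covering Brauer--Lusztig blocks and cuspidality with actions} is exactly $z_1 z_2^{-1} \in S$.

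The main obstacle should be the reverse inclusion in the global union, which genuinely requires applying Bonnaf\'e's Clifford theory for Deligne--Lusztig induction to transfer the series structure from $\K^F$ up to $\wt{\K}^F$; the rest of the argument is bookkeeping with Lemma \ref{lem:Covering Brauer--Lusztig blocks and cuspidality with actions}, Corollary \ref{cor:Covering Brauer--Lusztig blocks and cuspidality with actions}, and elementary Clifford theory.
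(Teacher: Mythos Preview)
Your proposal is correct and follows essentially the same line as the paper's proof, using Lemma~\ref{lem:Covering Brauer--Lusztig blocks and cuspidality with actions}, Corollary~\ref{cor:Covering Brauer--Lusztig blocks and cuspidality with actions}, and standard Clifford theory. One small difference: for the reverse inclusion in the global union the paper applies \cite[Corollary~3.3.25]{Gec-Mal20} directly to see that any $\wt{\chi}$ over $\chi\in\E(\K^F,(\L,\lambda))$ is a constituent of $\R_{\wt{\L}}^{\wt{\K}}(\lambda^{\wt{\L}^F})$, which is quicker than your route through an a~priori partition of $\irr(\wt{\K}^F)$ into $e$-series (and avoids the need to justify such a partition for $\wt{\K}$).
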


\begin{proof}
Set $\wt{\mathcal{G}}:=\irr(\wt{\K}^F\mid \E(\K^F,(\L,\lambda)))$ and $\wt{\mathcal{N}}:=\irr(\n_{\wt{\K}}(\L)^F\mid\lambda)$. First, we claim that $\wt{\mathcal{G}}$ is the union of the $e$-Harish-Chandra series in the set $\HC(\wt{\K}^F,(\L,\lambda))$. In fact, if $\wt{\chi}\in\wt{\mathcal{G}}$, then there exists $\chi\in\E(\K^F,(\L,\lambda))$ lying below $\wt{\chi}$. By \cite[Corollary 3.3.25]{Gec-Mal20}, it follows that $\wt{\chi}$ is an irreducible constituent of $\R_{\wt{\L}}^{\wt{\K}}(\lambda^{\wt{\L}^F})$ and therefore there exists $\wt{\nu}\in\irr(\wt{\L}^F\mid \lambda)$ such that $\wt{\chi}\in\E(\wt{\K},(\wt{\L},\wt{\nu}))$. On the other hand, if $\wt{\nu}\in\irr(\wt{\L}^F\mid \lambda)$ and $\wt{\chi}\in\E(\wt{\K}^F,(\wt{\L},\wt{\nu}))$, then \cite[Corollary 3.3.25]{Gec-Mal20} implies that $\wt{\chi}$ lies over some character $\chi\in\E(\K^F,(\L,\lambda))$. Since the action of $\mathcal{K}$ on $\HC(\wt{\K}^F,(\L,\lambda))$ is transitive and recalling the definition of $\mathcal{T}$, we hence obtain \eqref{eq:Global partition}.

Now we prove \eqref{eq:Local partition}. By Clifford theory, we know that every element of $\mathcal{G}$ lies above some character $\wt{\nu}\in\irr(\wt{\L}\mid \lambda)$. Since $\mathcal{K}$ is transitive on $\irr(\wt{\L}^F\mid \lambda)$, we deduce that $\wt{\mathcal{N}}$ is contained in the union
\[\bigcup\limits_{z\in\mathcal{K}}\irr\left(\n_{\wt{\K}}(\L)^F\enspace\middle|\enspace \wt{\lambda}\right)\cdot \wh{z}_{\n_{\wt{\K}}(\L)}.\]
Moreover, we claim that the above union coincides with
\begin{equation}
\label{eq:Covering Brauer--Lusztig blocks and cuspidality with actions, partitions}
\bigcup\limits_{z\in\mathcal{T}}\irr\left(\n_{\wt{\K}}(\L)^F\enspace\middle|\enspace \wt{\lambda}\right)\cdot \wh{z}_{\n_{\wt{\K}}(\L)}.
\end{equation}
To see this, let $z\in\mathcal{K}$ and write $z=z_0t$, for some $z_0\in\mathcal{K}_{\E(\wt{\K}^F,(\wt{\L},\wt{\lambda}))}$ and $t\in \mathcal{T}$. By Corollary \ref{cor:Covering Brauer--Lusztig blocks and cuspidality with actions} we obtain $z_0\in\n_{\wt{\K}}(\L,\lambda)^F(\n_{\wt{\K}}(\L,\lambda)^F\ltimes\mathcal{K})_{\wt{\lambda}}$ and therefore
\[\irr\left(\n_{\wt{\K}}(\L)^F\enspace\middle|\enspace \wt{\lambda}\right)\cdot \wh{z}_{\n_{\wt{\K}}(\L)}=\irr\left(\n_{\wt{\K}}(\L)^F\enspace\middle|\enspace \wt{\lambda}\right)\cdot \wh{t}_{\n_{\wt{\K}}(\L)}.\]
This proves our claim and it remains to show that the union in \eqref{eq:Covering Brauer--Lusztig blocks and cuspidality with actions, partitions} is disjoint. Assume that, for some $z\in\mathcal{T}$, there exists a character $\wt{\psi}$ inside both $\irr(\n_{\wt{\K}}(\L)^F\mid \wt{\lambda})$ and $\irr(\n_{\wt{\K}}(\L)^F\mid \wt{\lambda})\cdot \wh{z}_{\n_{\wt{\K}}(\L)}$. By \cite[Problem 5.3]{Isa76} we deduce that $\irr(\n_{\wt{\K}}(\L)^F\mid \wt{\lambda})\cdot \wh{z}_{\n_{\wt{\K}}(\L)}=\irr(\n_{\wt{\K}}(\L)^F\mid \wt{\lambda}\cdot \wh{z}_{\wt{\L}})$ and hence $\wt{\psi}$ lies above $\wt{\lambda}$ and $\wt{\lambda}\cdot \wh{z}_{\wt{\L}}$. By Clifford's theorem $\wt{\lambda}=(\wt{\lambda}\cdot \wh{z}_{\wt{\L}})^u=\wt{\lambda}^u\cdot \wh{z}_{\wt{\L}}$, for some $u\in\n_{\wt{\K}}(\L)^F$ and now Lemma \ref{lem:Covering Brauer--Lusztig blocks and cuspidality with actions} implies $\E(\wt{\K}^F,(\wt{\L},\wt{\lambda}))=\E(\wt{\K}^F,(\wt{\L},\wt{\lambda}))^z$. By the definition of $\mathcal{T}$ it follows that the union in \eqref{eq:Covering Brauer--Lusztig blocks and cuspidality with actions, partitions} is disjoint.
\end{proof}

As a corollary of Proposition \ref{prop:Covering Brauer--Lusztig blocks and cuspidality with actions, partitions} and using the bijection $\Upsilon_{(\wt{\L},\wt{\lambda})}^{\wt{\K}}$ from \eqref{eq:Bijection for connected center}, we are finally able to prove the main result of this subsection. The bijection described in the following theorem is part of the requirements of the criteria we prove in Section \ref{sec:Criteria} (see Assumption \ref{ass:Assumption for the criterion, central} (ii) and Assumption \ref{ass:Assumption for the criterion} (ii)).

\begin{theo}
\label{thm:Bijection for connected center}
Assume Hypothesis \ref{hyp:Brauer--Lusztig blocks} and Hypothesis \ref{hyp:e-Harish-Chandra theory}. Suppose there exists a $((\wt{\G}^F\mathcal{A})_{\K,\L}\ltimes \mathcal{K})$-equivariant extension map for $\cusp{\wt{\L}^F}$ with respect to $\wt{\L}^F\unlhd \n_{\wt{\K}}(\L)^F$. Then, there exists a defect preserving $\left((\wt{\G}^F\mathcal{A})_{\K,(\L,\lambda)}\ltimes \mathcal{K}\right)$-equivariant bijection
\[\wt{\Omega}_{(\L,\lambda)}^\K:\irr\left(\wt{\K}^F\enspace\middle|\enspace \E\left(\K^F,(\L,\lambda)\right)\right)\to\irr\left(\n_{\wt{\K}}(\L)^F\enspace\middle|\enspace\lambda\right)\]
such that, for every $\wt{\chi}\in \irr(\wt{\K}^F\mid \E(\K^F,(\L,\lambda)))$, the following conditions hold:
\begin{enumerate}
\item $\irr\left(\wt{\chi}_{\z(\wt{\K}^F)}\right)=\irr\left(\wt{\Psi}(\wt{\chi})_{\z(\wt{\K}^F)}\right)$;
\item $\bl\left(\wt{\chi}\right)=\bl\left(\wt{\Psi}\left(\wt{\chi}\right)\right)^{\wt{\K}^F}$.
\end{enumerate}
\end{theo}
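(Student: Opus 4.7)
The plan is to construct $\wt{\Omega}^\K_{(\L,\lambda)}$ piecewise by combining the bijections $\Upsilon^{\wt{\K}}_{(\wt{\L},\wt{\lambda})}$ of \eqref{eq:Bijection for connected center} with the partitions supplied by Proposition \ref{prop:Covering Brauer--Lusztig blocks and cuspidality with actions, partitions}. First fix some $\wt{\lambda}\in\irr(\wt{\L}^F\mid\lambda)$ and a transversal $\mathcal{T}$ for the cosets of $\mathcal{K}_{\E(\wt{\K}^F,(\wt{\L},\wt{\lambda}))}$ in $\mathcal{K}$. By \eqref{eq:Global partition} each $\wt{\chi}$ in the domain has a unique decomposition $\wt{\chi}=\wt{\chi}_0\cdot\wh{z}_{\wt{\K}}$ with $\wt{\chi}_0\in\E(\wt{\K}^F,(\wt{\L},\wt{\lambda}))$ and $z\in\mathcal{T}$, and \eqref{eq:Local partition} gives the analogous decomposition of the codomain. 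I then set
\[
\wt{\Omega}^\K_{(\L,\lambda)}\bigl(\wt{\chi}_0\cdot\wh{z}_{\wt{\K}}\bigr):=\Upsilon^{\wt{\K}}_{(\wt{\L},\wt{\lambda})}(\wt{\chi}_0)\cdot\wh{z}_{\n_{\wt{\K}}(\L)},
\]
which is a bijection since $\Upsilon^{\wt{\K}}_{(\wt{\L},\wt{\lambda})}$ is a bijection from $\E(\wt{\K}^F,(\wt{\L},\wt{\lambda}))$ onto $\irr(\n_{\wt{\K}}(\L)^F\mid\wt{\lambda})$.

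Defect preservation and properties (i) and (ii) will then follow on each piece from Lemma \ref{lem:Bijection for connected center, defect}, Lemma \ref{lem:Bijection for connected center, restriction to center} and Lemma \ref{lem:Bijection for connected center, block induction} respectively, together with three elementary observations: tensoring by a linear character preserves character degrees and hence $\ell$-defect; $\wh{z}_{\wt{\K}}$ restricts to $\wh{z}_{\n_{\wt{\K}}(\L)}$ on $\n_{\wt{\K}}(\L)^F$ and both therefore agree on $\z(\wt{\K}^F)$; and block induction is compatible with multiplication by the restriction of a linear character of the ambient group, so that $\bl(\wt{\chi}_0\cdot\wh{z}_{\wt{\K}})=\bl(\Upsilon^{\wt{\K}}_{(\wt{\L},\wt{\lambda})}(\wt{\chi}_0)\cdot\wh{z}_{\n_{\wt{\K}}(\L)})^{\wt{\K}^F}$.

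The main obstacle is equivariance under $\mathcal{S}:=(\wt{\G}^F\mathcal{A})_{\K,(\L,\lambda)}\ltimes\mathcal{K}$. The subtlety is that an element $(x,z)\in\mathcal{S}$ stabilises $\lambda$ but need not fix the chosen $\wt{\lambda}$, so $(x,z)$ may permute the pieces of both partitions. To handle this I would first use the transitivity of $\mathcal{K}$ on $\irr(\wt{\L}^F\mid\lambda)$ to produce $u\in\mathcal{K}$ with $\wt{\lambda}^x=\wt{\lambda}\cdot\wh{u}_{\wt{\L}}$, so that $(x,u^{-1})$ lies in $\mathcal{S}_{\wt{\lambda}}$; Lemma \ref{lem:Bijection for connected center, equivariance} then applies to $\Upsilon^{\wt{\K}}_{(\wt{\L},\wt{\lambda})}$ under $(x,u^{-1})$. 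The remainder is bookkeeping: one rewrites $\wt{\chi}^{(x,z)}$ in the normal form $\wt{\chi}_0'\cdot\wh{t}_{\wt{\K}}$ dictated by $\mathcal{T}$, absorbing any $\mathcal{K}_{\E(\wt{\K}^F,(\wt{\L},\wt{\lambda}))}$-contribution into the $e$-Harish-Chandra factor (allowed because that subgroup stabilises $\E(\wt{\K}^F,(\wt{\L},\wt{\lambda}))$ setwise), and an identical calculation on the local side shows that $\wt{\Omega}^\K_{(\L,\lambda)}(\wt{\chi})^{(x,z)}$ lies in the matching piece and coincides with $\wt{\Omega}^\K_{(\L,\lambda)}(\wt{\chi}^{(x,z)})$. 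The hardest step is precisely this final compatibility check, which must simultaneously track the $\mathcal{S}$-action on $e$-Harish-Chandra series, on the linear characters $\wh{\cdot}_{\wt{\K}}$ and $\wh{\cdot}_{\n_{\wt{\K}}(\L)}$, and on the choice of transversal $\mathcal{T}$.
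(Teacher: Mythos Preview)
Your approach is essentially the paper's, reorganised. Both proofs fix $\wt{\lambda}$, use the partitions of Proposition~\ref{prop:Covering Brauer--Lusztig blocks and cuspidality with actions, partitions}, transport $\Upsilon^{\wt{\K}}_{(\wt{\L},\wt{\lambda})}$ across the pieces by the $\mathcal{K}$-twists, and deduce defect, central characters and blocks from Lemmas~\ref{lem:Bijection for connected center, defect}--\ref{lem:Bijection for connected center, block induction}. The difference is in how equivariance is established. The paper first proves the group-theoretic identity
\[
\wt{\K}^F_\L\bigl((\wt{\G}^F\mathcal{A})_{\K,\L}\ltimes\mathcal{K}\bigr)_{\wt{\lambda}}\cdot\mathcal{T}
=\wt{\K}^F_\L\bigl((\wt{\G}^F\mathcal{A})_{\K,(\L,\lambda)}\ltimes\mathcal{K}\bigr),
\]
then chooses an $((\wt{\G}^F\mathcal{A})_{\K,\L}\ltimes\mathcal{K})_{\wt{\lambda}}$-transversal $\wt{\mathbb{T}}_{\rm glo}$ inside $\E(\wt{\K}^F,(\wt{\L},\wt{\lambda}))$, uses the identity to see that $\wt{\mathbb{T}}_{\rm glo}$ is already an $\mathcal{S}$-transversal in the full domain, and defines $\wt{\Omega}^\K_{(\L,\lambda)}$ by extending $\Upsilon^{\wt{\K}}_{(\wt{\L},\wt{\lambda})}$ equivariantly from this transversal. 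Equivariance is then automatic; your direct check is replaced by the single claim above.

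One point in your sketch deserves more care. When you ``absorb'' a $\mathcal{K}_{\E(\wt{\K}^F,(\wt{\L},\wt{\lambda}))}$-contribution $v_0$ into the $e$-Harish-Chandra factor, the fact that $v_0$ stabilises the series only guarantees that $\wt{\chi}_0''\cdot\wh{v_0}_{\wt{\K}}$ stays in $\E(\wt{\K}^F,(\wt{\L},\wt{\lambda}))$; it does not by itself give $\Upsilon^{\wt{\K}}_{(\wt{\L},\wt{\lambda})}(\wt{\chi}_0''\cdot\wh{v_0}_{\wt{\K}})=\Upsilon^{\wt{\K}}_{(\wt{\L},\wt{\lambda})}(\wt{\chi}_0'')\cdot\wh{v_0}_{\n_{\wt{\K}}(\L)}$, which is what you need for the local and global bookkeeping to match. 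For this you must invoke Corollary~\ref{cor:Covering Brauer--Lusztig blocks and cuspidality with actions}: write $v_0=n\cdot s$ with $n\in\n_{\wt{\K}}(\L,\lambda)^F$ (acting by inner conjugation, hence trivially on both character sets) and $s\in(\n_{\wt{\K}}(\L,\lambda)^F\ltimes\mathcal{K})_{\wt{\lambda}}\leq((\wt{\G}^F\mathcal{A})_{\K,\L}\ltimes\mathcal{K})_{\wt{\lambda}}$, so that Lemma~\ref{lem:Bijection for connected center, equivariance} applies to $s$. This is precisely the content the paper packages into its group identity; once you insert it your argument goes through.
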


\begin{proof}
Set $\wt{\mathcal{G}}:=\irr(\wt{\K}^F\mid \E(\K^F,(\L,\lambda)))$ and $\wt{\mathcal{N}}:=\irr(\n_{\wt{\K}}(\L)^F\mid \lambda)$ and fix $\wt{\lambda}\in\irr(\wt{\L}^F\mid\lambda)$. Let
\[\Upsilon_{(\wt{\L},\wt{\lambda})}^{\wt{\K}}:\E\left(\wt{\K}^F,\left(\wt{\L},\wt{\lambda}\right)\right)\to\irr\left(\n_{\wt{\K}}(\L)^F\enspace\middle|\enspace\wt{\lambda}\right)\]
be the bijection constructed in \eqref{eq:Bijection for connected center}. Let $\wt{\mathbb{T}}_{\rm glo}$ be a $((\wt{\G}^F\mathcal{A})_{\K,\L}\ltimes \mathcal{K})_{\wt{\lambda}}$-transversal in $\E(\wt{\K}^F,(\wt{\L},\wt{\lambda}))$ and observe that, by Lemma \ref{lem:Bijection for connected center, equivariance}, the set $\wt{\mathbb{T}}_{\rm loc}:=\{\Upsilon_{(\wt{\L},\wt{\lambda})}^{\wt{\K}}(\wt{\chi})\mid \wt{\chi}\in\wt{\mathbb{T}}_{\rm glo}\}$ is a $((\wt{\G}^F\mathcal{A})_{\K,\L}\ltimes \mathcal{K})_{\wt{\lambda}}$-transversal in $\irr(\n_{\wt{\K}}(\L)^F\mid \lambda)$.

Next, we fix a transversal $\mathcal{T}$ for $\mathcal{K}_{\E(\wt{\K}^F,(\wt{\L},\wt{\lambda}))}$ in $\mathcal{K}$ and
we claim that
\begin{equation}
\label{eq:Bijection for connected center, 1}
\wt{\K}^F_\L\left(\left(\wt{\G}^F\mathcal{A}\right)_{\K,\L}\ltimes \mathcal{K}\right)_{\wt{\lambda}}\cdot\mathcal{T}=\wt{\K}^F_\L\left(\wt{\G}^F\mathcal{A}\right)_{\K,(\L,\lambda)}\ltimes \mathcal{K}.
\end{equation}
To prove this equality, consider $xz\in(\wt{\G}^F\mathcal{A})_{\K,(\L,\lambda)}\ltimes \mathcal{K}$. Then both $\wt{\lambda}$ and $\wt{\lambda}^x\cdot \wh{z}_{\wt{\L}}$ lie over $\lambda$ and by \cite[Problem 6.2]{Isa76} there exists $u\in\mathcal{K}$ such that $\wt{\lambda}=\wt{\lambda}^x\cdot \wh{z}_{\wt{\L}}\cdot \wh{u}_{\wt{\L}}$. Therefore $xz\in ((\wt{\G}^F\mathcal{A})_{\K,\L}\ltimes \mathcal{K})_{\wt{\lambda}}\cdot\mathcal{K}$. On the other hand, applying Corollary \ref{cor:Covering Brauer--Lusztig blocks and cuspidality with actions}, we obtain $\mathcal{K}_{\E(\wt{\K}^F,(\wt{\L},\wt{\lambda}))}\leq \wt{\K}^F_\L(\wt{\K}^F_\L\ltimes \mathcal{K})_{\wt{\lambda}}$ and by the definition of $\mathcal{T}$, we conclude that
\[\wt{\K}^F_\L\left(\left(\wt{\G}^F\mathcal{A}\right)_{\K,\L}\ltimes \mathcal{K}\right)_{\wt{\lambda}}\cdot \mathcal{T}\geq \wt{\K}^F_\L\left(\left(\wt{\G}^F\mathcal{A}\right)_{\K,(\L,\lambda)}\ltimes \mathcal{K}\right).\]
To prove the remaining inclusion it's enough to show that
\[\wt{\L}^F\left(\left(\wt{\G}^F\mathcal{A}\right)_{\K,(\L,\lambda)}\ltimes \mathcal{K}\right)_{\wt{\lambda}}=\left(\left(\wt{\G}^F\mathcal{A}\right)_{\K,\L}\ltimes \mathcal{K}\right)_{\wt{\lambda}}.\]
Since $\wt{\lambda}$ is $\wt{\L}^F$-invariant, one inclusion is trivial. So let $xz\in((\wt{\G}^F\mathcal{A})_{\K,\L}\ltimes \mathcal{K})_{\wt{\lambda}}$ and observe that $\wt{\lambda}=\wt{\lambda}^x\cdot \wh{z}_{\wt{\L}}$ lies both over $\lambda$ and over $\lambda^x$. By Clifford's theorem there exists $y\in\wt{\L}^F$ such that $\lambda=\lambda^{xy}$ and hence $xz\in\wt{\L}^F((\wt{\G}^F\mathcal{A})_{\K,(\L,\lambda)}\ltimes \mathcal{K})_{\wt{\lambda}}$. This proves the claim.

Now, using \eqref{eq:Bijection for connected center, 1}, we show that $\wt{\mathbb{T}}_{\rm glo}$ is a $((\wt{\G}^F\mathcal{A})_{\K,(\L,\lambda)}\ltimes \mathcal{K})$-transversal in $\wt{\mathcal{G}}$. Consider $\wt{\chi}\in\wt{\mathcal{G}}$. By Proposition \ref{prop:Covering Brauer--Lusztig blocks and cuspidality with actions, partitions} there exist unique $z\in\mathcal{T}$ and $\wt{\chi}_0'\in\E(\wt{\K}^F,(\wt{\L},\wt{\lambda}))$ such that $\wt{\chi}=\wt{\chi}_0'\cdot \wh{z}_{\wt{\K}}$. Let $\wt{\chi}_0$ be the unique element in $\wt{\mathbb{T}}_{\rm glo}$ such that $\wt{\chi}_0'=\wt{\chi}_0^x\cdot \wh{u}_{\wt{\K}}$, for some $xu\in((\wt{\G}^F\mathcal{A})_{\K,\L}\ltimes \mathcal{K})_{\wt{\lambda}}$. Then $\wt{\chi}=\wt{\chi}_0^x\cdot\wh{u}_{\wh{K}}\cdot \wh{z}_{\wt{K}}$, for $xuz\in((\wt{\G}^F\mathcal{A})_{\K,\L}\ltimes \mathcal{K})_{\wt{\lambda}}\cdot \mathcal{T}$. But using \eqref{eq:Bijection for connected center, 1} and since $\wt{\chi}$ and $\wt{\chi}_0$ are $\n_{\wt{\K}}(\L)^F$-invariant, we conclude that $\wt{\chi}=\wt{\chi}_0^y\cdot\wh{v}_{\wt{\K}}$, for some $y\in(\wt{\G}^F\mathcal{A})_{\K,(\L,\lambda)}$ and $v\in \mathcal{K}$. This argument also shows that $\wt{\chi}_0$ is the unique element of $\wt{\mathbb{T}}_{\rm glo}$ with this property.

Similarly, using \eqref{eq:Bijection for connected center, 1}, we deduce that the set $\wt{\mathbb{T}}_{\rm loc}$ is a $((\wt{\G}^F\mathcal{A})_{\K,(\L,\lambda)}\ltimes \mathcal{K})$-transversal in $\wt{\mathcal{N}}$. Now, the map
\[\wt{\Omega}_{(\L,\lambda)}^\K:\wt{\mathcal{G}}\to\wt{\mathcal{N}}\]
defined by
\[\wt{\Omega}_{(\L,\lambda)}^\K\left(\wt{\chi}^x\cdot \wh{z}_{\wt{\K}}\right):=\Upsilon_{(\wt{\L},\wt{\lambda})}^{\wt{\K}}(\wt{\chi})^x\cdot \wh{z}_{\n_{\wt{\K}}(\L)},\]
for every $\wt{\chi}\in\wt{\mathbb{T}}_{\rm glo}$, $x\in(\wt{\G}^F\mathcal{A})_{\K,(\L,\lambda)}$ and $z\in\mathcal{K}$, is a $((\wt{\G}^F\mathcal{A})_{\K,(\L,\lambda)}\ltimes \mathcal{K})$-equivariant bijection. The remaining properties follow from Lemma \ref{lem:Bijection for connected center, defect}, Lemma \ref{lem:Bijection for connected center, restriction to center} and Lemma \ref{lem:Bijection for connected center, block induction} after noticing that $\wh{z}_{\wt{\K}}$ and $\wh{z}_{\n_{\wt{\K}}(\L)}$ are linear characters and that
\[\bl\left(\wt{\psi}\cdot\wh{z}_{\n_{\wt{\K}}(\L)}\right)^{\wt{\K}^F}=\bl\left(\wt{\psi}\right)^{\wt{\K}^F}\cdot \wh{z}_{\wt{\K}}\]
for every $\wt{\psi}\in\irr(\n_{\wt{\K}}(\L)^F)$ and $z\in\mathcal{K}$.
\end{proof}

\section{The criteria}
\label{sec:Criteria}

In this section we prove Theorem \ref{thm:Criterion, central} and Theorem \ref{thm:Criterion} which serve as criteria for Condition \ref{cond:cEBC} and Condition \ref{cond:Main iEBC} respectively. The assumptions of these criteria consist of two main parts: first we assume the existence of certain bijections (see Assumption \ref{ass:Assumption for the criterion, central} (ii) and Assumption \ref{ass:Assumption for the criterion} (ii)). These bijections have been constructed in Theorem \ref{thm:Bijection for connected center} under suitable restrictions. Secondly, we need to control the action of automorphisms on irreducible characters (see Assumption \ref{ass:Assumption for the criterion, central} (iii)-(iv) and Assumption \ref{ass:Assumption for the criterion} (iii)-(iv)) in order to construct projective representations via an application of \cite[Lemma 2.11]{Spa12}. This second problem is part of an important ongoing project in representation theory of finite groups of Lie type. Moreover, in order to obtain $\G^F$-block isomorphisms of character triples, in the assumption of the criterion for Condition \ref{cond:Main iEBC} we need to include certain block theoretic requirements (see Assumption \ref{ass:Assumption for the criterion} (v)-(vi)). These restrictions are analogous to those introduced in \cite[Theorem 4.1]{Cab-Spa15}, \cite[Theorem 2.4]{Bro-Spa20} and \cite[Theorem 4.5]{Bro-Spa21}.

Throughout this section we consider the following hypothesis. 

\begin{hyp}
\label{hyp:Reduction}
Assume Hypothesis \ref{hyp:Brauer--Lusztig blocks} and suppose that $\G$ is simple of simply connected type such that $\O_\ell(\G^F)\leq \z(\G^F)$.
\end{hyp}

\begin{rmk}
\label{rmk:Reduction}
Observe that under Hypothesis \ref{hyp:Reduction} the prime $\ell$ does not divide $|\z(\G)^F|$ and therefore $\O_\ell(\G^F)=1$. Moreover, by using Remark \ref{rmk:Brauer-Lusztig for simple simply connected} we deduce that Hypothesis \ref{hyp:Reduction} holds whenever $\G$ is simple of simply connected type with $\G^F\neq {^2\mathbf{E}}_6(2)$, $\mathbf{E}_7(2)$, $\mathbf{E}_8(2)$ and such that $\G^F/\z(\G^F)$ is a non-abelian simple group and $\ell\in\Gamma(\G,F)$ with $\ell\geq 5$.
\end{rmk}

The results presented in this section should be compared to \cite[Theorem 2.12]{Spa12}, \cite[Theorem 4.1]{Cab-Spa15}, \cite[Theorem 2.4]{Bro-Spa21}, \cite[Theorem 2.1]{Ruh22} and \cite[Theorem 9.2]{Ruh21}.

\subsection{The criterion for Condition \ref{cond:cEBC}}
\label{sec:The criterion for cEBC}

We start by dealing with Condition \ref{cond:cEBC}. The results obtained in this subsection are then used in the next one to prove the criterion for Condition \ref{cond:Main iEBC} under additional restrictions.

Consider $\G$, $F:\G\to\G$, $q$, $\ell$ and $e$ as in Notation \ref{notation} and let $i:\G\to\wt{\G}$ be a regular embedding. We recall once more that the group $\mathcal{K}$ introduced in Section \ref{sec:Regular embeddings} acts on the sets of irreducible characters of $\wt{\G}^F$ and $\n_{\wt{\G}}(\L)^F$ (see Definition \ref{def:Action by linear characters}).

\begin{ass}
\label{ass:Assumption for the criterion, central}
Let $(\L,\lambda)$ be an $e$-cuspidal pair of $\G$, set
\[\mathcal{G}:=\E\left(\G^F,(\L,\lambda)\right) \hspace{10pt}\text{and}\hspace{10pt}\mathcal{N}:=\irr\left(\n_\G(\L)^F\enspace\middle|\enspace \lambda\right)\]
and consider
\[\wt{\mathcal{G}}:=\irr\left(\wt{\G}^F\enspace\middle|\enspace \mathcal{G}\right)\hspace{10pt}\text{and}\hspace{10pt}\wt{\mathcal{N}}:=\irr\left(\n_{\wt{\G}}(\L)^F\enspace\middle|\enspace \mathcal{N}\right).\]
Assume that:
\begin{enumerate}
\item 
\begin{enumerate}[label=(\alph*)]
\item There is a semidirect decomposition $\wt{\G}^F\rtimes \mathcal{A}$, with $\mathcal{A}$ a finite abelian group, such that
\[\c_{\wt{\G}^F\mathcal{A}}(\G^F)=\z(\wt{\G}^F)\hspace{15pt}\text{and}\hspace{15pt}\wt{\G}^F\mathcal{A}/\z(\wt{\G}^F)\simeq \aut(\G^F);\]
\item Maximal extendibility holds with respect to $\G^F\unlhd \wt{\G}^F$;
\item Maximal extendibility holds with respect to $\n_\G(\L)^F\unlhd \n_{\wt{\G}}(\L)^F$.
\end{enumerate}
\item For $A:=(\wt{\G}^F\mathcal{A})_{(\L,\lambda)}$ there exists a defect preserving $(A\ltimes \mathcal{K})$-equivariant bijection
\[\wt{\Omega}_{(\L,\lambda)}^\G:\wt{\mathcal{G}}\to \wt{\mathcal{N}}\]
such that $\irr\left(\wt{\chi}_{\z(\wt{\G}^F)}\right)=\irr\left(\wt{\Omega}_{(\L,\lambda)}^\G(\wt{\chi})_{\z(\wt{\G}^F)}\right)$ for every $\wt{\chi}\in\wt{\mathcal{G}}$.
\item For every $\wt{\chi}\in\wt{\mathcal{G}}$ there exists $\chi\in\mathcal{G}\cap \irr\left(\wt{\chi}_{\G^F}\right)$ such that:
\begin{enumerate}[label=(\alph*)]
\item $\left(\wt{\G}^F\mathcal{A}\right)_{\chi}=\wt{\G}^F_\chi\mathcal{A}_{\chi}$;
\item $\chi$ extends to $\chi'\in\irr\left(\G^F\mathcal{A}_{\chi}\right)$.
\end{enumerate}
\item For every $\wt{\psi}\in\wt{\mathcal{N}}$ there exists $\psi\in\mathcal{N}\cap \irr\left(\wt{\psi}_{\n_\G(\L)^F}\right)$ such that:
\begin{enumerate}[label=(\alph*)]
\item $\left(\wt{\G}^F \mathcal{A}\right)_{\L,\psi}=\n_{\wt{\G}}(\L)^F_\psi\left(\G^F\mathcal{A}\right)_{\L,\psi}$; 
\item $\psi$ extends to $\psi'\in\irr\left(\left(\G^F \mathcal{A}\right)_{\L,\psi}\right)$.
\end{enumerate}
\end{enumerate}
\end{ass}

Our aim is to show that Assumption \ref{ass:Assumption for the criterion, central} implies Condition \ref{cond:cEBC}. Before giving a proof of this result, we show that Assumption \ref{ass:Assumption for the criterion, central} (iii.a) and Assumption \ref{ass:Assumption for the criterion, central} (iv.a) are equivalent in the presence of an equivariant bijection $\Omega^\G_{(\L,\lambda)}:\mathcal{G}\to\mathcal{N}$.

\begin{lem}
\label{lem:Local star condition follows from Global star condition, central}
Assume Hypothesis \ref{hyp:Reduction}. Let $(\L,\lambda)$ be an $e$-cuspidal pair of $\G$ and suppose that there exists a $(\wt{\G}^F\mathcal{A})_{(\L,\lambda)}$-equivariant bijection
\[\Omega_{(\L,\lambda)}^\G:\E(\G^F,(\L,\lambda))\to\irr\left(\n_\G(\L)^F\enspace\middle|\enspace \lambda\right).\]
If $\chi\in\E(\G^F,(\L,\lambda))$ and $\psi:=\Omega^\G_{(\L,\lambda)}(\chi)$, then
\begin{equation}
\label{eq:Global star condition, central}
\left(\wt{\G}^F\mathcal{A}\right)_{\chi}=\wt{\G}^F_\chi\mathcal{A}_{\chi}
\end{equation}
if and only if
\begin{equation}
\label{eq:Local star condition, central}
\left(\wt{\G}^F \mathcal{A}\right)_{\L,\psi}=\n_{\wt{\G}}(\L)^F_\psi\left(\G^F\mathcal{A}\right)_{\L,\psi}.
\end{equation}
\end{lem}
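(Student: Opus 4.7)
The strategy is to reduce both equalities to conditions on the projection to $\mathcal{A}$ of a common stabilizer, and then to compare these conditions element by element. Throughout, let $\pi \colon \wt{\G}^F\mathcal{A} \to \mathcal{A}$ denote the projection from the semidirect decomposition $\wt{\G}^F\mathcal{A} = \wt{\G}^F \rtimes \mathcal{A}$, and set $S := (\wt{\G}^F\mathcal{A})_{(\L, \lambda), \chi}$.

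First, the $(\wt{\G}^F\mathcal{A})_{(\L,\lambda)}$-equivariance of $\Omega^\G_{(\L,\lambda)}$ gives immediately $S = (\wt{\G}^F\mathcal{A})_{(\L, \lambda), \psi}$. Combining the partition property of $e$-Harish-Chandra series (\cite[Proposition 4.10]{Ros-Generalized_HC_theory_for_Dade}) on the global side with Clifford's theorem applied to $\psi$ lying above $\lambda$ on the local side, I would obtain the decompositions $(\wt{\G}^F\mathcal{A})_\chi = \G^F \cdot S$ and $(\wt{\G}^F\mathcal{A})_{\L, \psi} = \n_\G(\L)^F \cdot S$. Since $\G^F$ and $\n_\G(\L)^F$ are contained in $\ker(\pi)$, the projections $\pi((\wt{\G}^F\mathcal{A})_\chi)$ and $\pi((\wt{\G}^F\mathcal{A})_{\L, \psi})$ both coincide with $\pi(S)$. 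Using the standard criterion for a subgroup of a semidirect product to split, equation \eqref{eq:Global star condition, central} is equivalent to $\pi(S) \subseteq \mathcal{A}_\chi$, while \eqref{eq:Local star condition, central} is equivalent to $\pi(S) \subseteq \pi((\G^F\mathcal{A})_{\L, \psi})$. The problem then reduces to showing $\mathcal{A}_\chi \cap \pi(S) = \pi((\G^F\mathcal{A})_{\L, \psi}) \cap \pi(S)$.

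For this final step, take $a \in \pi(S)$ and write $a = \pi(\tilde{g} a)$ for some $\tilde{g} a \in S$, further decomposing $\tilde{g} = y g_1$ with $y \in \wt{\L}^F$ and $g_1 \in \G^F$ via $\wt{\G}^F = \wt{\L}^F \G^F$. Since $g_1$ acts trivially on $\irr(\G^F)$ and elements of $\n_\G(\L)^F$ act trivially on $\irr(\n_\G(\L)^F)$, the relations $\chi^{\tilde{g}a} = \chi$ and $\psi^{\tilde{g}a} = \psi$ become $(\chi^y)^a = \chi$ and $\psi^{g_1 a} = \psi^{y^{-1}}$. A short calculation from these yields $a \in \mathcal{A}_\chi \Leftrightarrow \chi^y = \chi$ and $a \in \pi((\G^F\mathcal{A})_{\L,\psi}) \Leftrightarrow \psi^y = \psi$.

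The main and most delicate step is then the equivalence $\chi^y = \chi \Leftrightarrow \psi^y = \psi$, which I would obtain by correcting $y$ into the stabilizer $(\wt{\G}^F\mathcal{A})_{(\L, \lambda)}$ on which $\Omega^\G_{(\L,\lambda)}$ is equivariant. Concretely, if $\chi^y = \chi$, then $(\L, \lambda^y)$ lies in the same $e$-Harish-Chandra series as $(\L, \lambda)$, so \cite[Proposition 4.10]{Ros-Generalized_HC_theory_for_Dade} yields some $u \in \n_\G(\L)^F$ with $\lambda^y = \lambda^u$; the element $yu^{-1}$ then fixes $(\L, \lambda)$, and the equivariance of $\Omega^\G_{(\L,\lambda)}$ together with the fact that $u^{-1}$ is inner on $\n_\G(\L)^F$ gives $\psi^y = \psi$. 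The converse is analogous, this time using Clifford's theorem applied to $\psi = \psi^y$ (which lies simultaneously above $\lambda$ and $\lambda^y$) to produce the element $u$.
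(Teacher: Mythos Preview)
Your argument is correct and rests on the same three ingredients as the paper's proof: the Frattini-type reductions $(\wt{\G}^F\mathcal{A})_\chi=\G^F\cdot S$ and $(\wt{\G}^F\mathcal{A})_{\L,\psi}=\n_\G(\L)^F\cdot S$ obtained from \cite[Proposition~4.10]{Ros-Generalized_HC_theory_for_Dade} and Clifford's theorem respectively, together with the equivariance of $\Omega^\G_{(\L,\lambda)}$ to pass between the $\chi$- and $\psi$-stabilizers once one has corrected into $(\wt{\G}^F\mathcal{A})_{(\L,\lambda)}$. The paper organises these ingredients a little differently: rather than projecting to $\mathcal{A}$ and arguing element by element, it first shows $(\wt{\G}^F\mathcal{A})_{\L,\chi}=(\wt{\G}^F\mathcal{A})_{\L,\psi}$ and then proves that this common group equals $\n_\G(\L)^F\bigl(\wt{\G}^F_{(\L,\lambda),\chi}\cdot(\G^F\mathcal{A})_{(\L,\lambda),\chi}\bigr)$ by a direct subgroup computation, treating only one implication explicitly and declaring the other analogous. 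Your projection-based packaging is a clean symmetric alternative; one small caution is that the displayed identity $\psi^{g_1 a}=\psi^{y^{-1}}$ is convention-sensitive (under a right action the relation $y(g_1a)\in S$ yields $\psi^{(g_1a)^{-1}}=\psi^{y}$ instead), but the equivalence $a\in\pi((\G^F\mathcal{A})_{\L,\psi})\Longleftrightarrow\psi^y=\psi$ that you actually need holds either way.
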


\begin{proof}
As the two implications can be shown by similar arguments, we only show that \eqref{eq:Global star condition, central} implies \eqref{eq:Local star condition, central}. To start, consider the subgroups
\[T:=\n_\G(\L)^F\left(\wt{\G}^F_{(\L,\lambda),\chi}\cdot (\G^F\mathcal{A})_{(\L,\lambda),\chi}\right)=\n_\G(\L)^F\left(\wt{\G}^F_{(\L,\lambda),\psi}\cdot (\G^F\mathcal{A})_{(\L,\lambda),\psi}\right)\]
and
\[V:=\n_\G(\L)^F(\wt{\G}^F \mathcal{A})_{(\L,\lambda),\chi}=\n_\G(\L)^F(\wt{\G}^F \mathcal{A})_{(\L,\lambda),\psi},\]
where the equalities follow since $\Omega_{(\L,\lambda)}^\G$ is equivariant by assumption.

Define $U(\chi):=(\wt{\G}^F\mathcal{A})_{\L,\chi}$ and $U(\psi):=(\wt{\G}^F\mathcal{A})_{\L,\psi}$. We claim that $U(\chi)=U(\psi)$. To prove this fact, notice that it is enough to show that $U(\chi)$ and $U(\psi)$ are contained in $V$, in fact this would imply $U(\chi)=U(\chi)\cap V=(\wt{\G}^F\mathcal{A})_{\L}\cap V=U(\psi)\cap V=U(\psi)$. If $x\in U(\chi)$, then $\chi\in\E(\G^F,(\L,\lambda))\cap \E(\G^F,(\L,\lambda)^x)$ and, by \cite[Proposition 4.10]{Ros-Generalized_HC_theory_for_Dade}, there exists $y\in\G^F$ such that $(\L,\lambda)=(\L,\lambda)^{xy}$. Notice that $y\in\n_\G(\L)^F$ and hence $x\in V$. On the other hand, if $x\in U(\psi)$, then $\psi$ lies over $\lambda^x$ and by Clifford's theorem $\lambda^{xy}=\lambda$, for some $y\in \n_\G(\L)^F$. Also in this case $x\in V$. Now $U(\chi)=U(\psi)$ and we denote this group by $U$.

Next, we claim that $T=U$. If this is true, then we deduce that $T\leq \n_{\wt{\G}}(\L)^F_\psi(\G^F\mathcal{A})_{\L,\psi}\leq U=T$ and therefore \eqref{eq:Local star condition, central} holds. First, observe that $T\leq U$. As $T\cap \G^F=\n_\G(\L)^F=U\cap \G^F$ and $T\leq U\leq (\wt{\G}^F\mathcal{A})_{\chi}$, it is enough to show that $T\G^F=(\wt{\G}^F\mathcal{A})_{\chi}$. First, repeating the same argument as before, a Frattini argument shows that
\begin{equation}
\label{eq:Local star condition follows from Global star condition 1, central}
\left(\wt{\G}^F\mathcal{A}\right)_{\chi}=\G^F\left(\wt{\G}^F\mathcal{A}\right)_{(\L,\lambda),\chi}
\end{equation}
and
\begin{equation}
\label{eq:Local star condition follows from Global star condition 2, central}
\wt{\G}^F_\chi=\G^F\wt{\G}^F_{(\L,\lambda),\chi}.
\end{equation}
Then using the hypothesis we finally obtain
\begin{align*}
\left(\wt{\G}^F\mathcal{A}\right)_{\chi}&\stackrel{\eqref{eq:Local star condition follows from Global star condition 1, central}}{=}\G^F\left(\wt{\G}^F\mathcal{A}\right)_{(\L,\lambda),\chi}
\\
&\stackrel{\eqref{eq:Global star condition, central}}{=}\G^F\left(\wt{\G}_\chi^F\left(\G^F\mathcal{A}\right)_{\chi}\right)_{(\L,\lambda)}
\\
&\stackrel{\eqref{eq:Local star condition follows from Global star condition 2, central}}{=}\G^F\left(\G^F\wt{\G}^F_{(\L,\lambda),\chi}\left(\G^F\mathcal{A}\right)_{\chi}\right)_{(\L,\lambda)}
\\
&=\G^F\wt{\G}^F_{(\L,\lambda),\chi}\left(\G^F\mathcal{A}\right)_{(\L,\lambda),\chi}
\\
&=\G^FT.
\end{align*}
This concludes the proof.
\end{proof}

We are now ready to prove the criterion for Condition \ref{cond:cEBC}. It should be clear from the proof of this result that, by using Lemma \ref{lem:Local star condition follows from Global star condition, central}, only one amongst Assumption \ref{ass:Assumption for the criterion, central} (iii.a) and Assumption \ref{ass:Assumption for the criterion, central} (iv.a) is actually necessary. In fact, the equivariant map required in Lemma \ref{lem:Local star condition follows from Global star condition, central} is constructed in the following proof independently form the choices of characters satisfying Assumption \ref{ass:Assumption for the criterion, central} (iii.a) and Assumption \ref{ass:Assumption for the criterion, central} (iv.a).

\begin{theo}
\label{thm:Criterion, central}
Assume Hypothesis \ref{hyp:Reduction} and Assumption \ref{ass:Assumption for the criterion, central} with respect to an $e$-cuspidal pair $(\L,\lambda)$ of $\G$. Then Condition \ref{cond:cEBC} holds for $(\L,\lambda)$ and $\G$.
\end{theo}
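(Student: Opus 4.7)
The overall strategy is to descend the $\wt{\G}^F$-level bijection $\wt{\Omega}^\G_{(\L,\lambda)}$ from Assumption \ref{ass:Assumption for the criterion, central}(ii) to the $\G^F$-level by Clifford theory, then use the extendibility and stabilizer data in Assumptions (iii)--(iv) together with \cite[Lemma 2.11]{Spa12} to construct compatible projective representations, and finally deduce the required $\G^F$-central isomorphism of character triples.

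To construct $\Omega^\G_{(\L,\lambda)}:\mathcal{G}\to\mathcal{N}$, I would fix a $\wt{\G}^F$-transversal $\mathbb{T}$ of $\mathcal{G}$. For each $\chi\in\mathbb{T}$, pick any $\wt{\chi}_0\in\irr(\wt{\G}^F\mid\chi)$ and apply Assumption (iii) to replace it by some $\wt{\G}^F$-conjugate $\wt{\chi}$ together with a specific $\chi\in\mathcal{G}\cap\irr(\wt{\chi}_{\G^F})$ satisfying (iii.a)--(iii.b); set $\wt{\psi}:=\wt{\Omega}^\G_{(\L,\lambda)}(\wt{\chi})$ and invoke Assumption (iv) to obtain $\psi\in\mathcal{N}\cap\irr(\wt{\psi}_{\n_\G(\L)^F})$ satisfying (iv.a)--(iv.b). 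Declare $\Omega^\G_{(\L,\lambda)}(\chi):=\psi$ and extend to all of $\mathcal{G}$ by $\wt{\G}^F$-equivariance. Well-definedness rests on the $\mathcal{K}$-equivariance of $\wt{\Omega}^\G_{(\L,\lambda)}$, which via Clifford theory matches the $\wt{\G}^F$-orbit size of $\chi$ with the $\n_{\wt{\G}}(\L)^F$-orbit size of $\psi$; combining this with Lemma \ref{lem:Local star condition follows from Global star condition, central} one checks that the stabilizers agree under the canonical isomorphism $\wt{\G}^F/\G^F\simeq\n_{\wt{\G}}(\L)^F/\n_\G(\L)^F$. Equivariance of $\Omega^\G_{(\L,\lambda)}$ under $\aut_\mathbb{F}(\G^F)_{(\L,\lambda)}$ then follows from the $(A\ltimes\mathcal{K})$-equivariance of $\wt{\Omega}^\G_{(\L,\lambda)}$ and the identification $\wt{\G}^F\mathcal{A}/\z(\wt{\G}^F)\simeq\aut(\G^F)$ from (i.a); defect preservation follows from that of $\wt{\Omega}^\G_{(\L,\lambda)}$ together with the fact that the indices $|\wt{\G}^F:\G^F|$ and $|\n_{\wt{\G}}(\L)^F:\n_\G(\L)^F|$ are $\ell'$-numbers under Hypothesis \ref{hyp:Reduction}.

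The main step is establishing the $\G^F$-central isomorphism $(X_\chi,\G^F,\chi)\isoc{\G^F}(\n_{X_\chi}(\L),\n_{\G^F}(\L),\psi)$ for each $\chi\in\mathbb{T}$; by the equivariance just proved, treating transversal representatives suffices. Combining the extension of $\chi$ to $\wt{\G}^F_\chi$ from maximal extendibility (i.b) with the extension $\chi'$ of $\chi$ to $\G^F\mathcal{A}_\chi$ from (iii.b), one applies \cite[Lemma 2.11]{Spa12} to produce a projective representation $\mathcal{P}$ of $X_\chi=\wt{\G}^F_\chi\mathcal{A}_\chi$ (using (iii.a)) above $\chi$ whose factor set is inflated from $X_\chi/\G^F$. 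Running the analogous construction on the local side using (i.c), (iv.a), and (iv.b) yields a projective representation $\mathcal{Q}$ of $\n_{X_\chi}(\L)=\n_{\wt{\G}}(\L)^F_\psi(\G^F\mathcal{A})_{\L,\psi}$ above $\psi$ whose factor set is inflated from $\n_{X_\chi}(\L)/\n_\G(\L)^F$.

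The hard part will be aligning the factor sets of $\mathcal{P}$ and $\mathcal{Q}$ and matching their central characters on $\c_{X_\chi}(\G^F)=\z(\wt{\G}^F)$. The natural isomorphism $X_\chi/\G^F\simeq\n_{X_\chi}(\L)/\n_\G(\L)^F$ identifies the two groups on which the factor sets are defined. Equality of the factor sets restricted to $\wt{\G}^F_\chi/\G^F$ reduces, via a standard regular-embedding argument, to the existence of a $\wt{\G}^F$-central isomorphism of character triples relating $\wt{\chi}$ and $\wt{\psi}$, which is supplied by the central-character condition $\irr(\wt{\chi}_{\z(\wt{\G}^F)})=\irr(\wt{\Omega}^\G_{(\L,\lambda)}(\wt{\chi})_{\z(\wt{\G}^F)})$ of Assumption (ii) together with maximal extendibility (i.b)--(i.c). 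Restrictions to $\mathcal{A}_\chi$ are trivial thanks to the genuine (non-projective) extensions given by (iii.b) and (iv.b). A possible residual discrepancy in central characters on $\mathcal{A}_\chi$ can be removed by multiplying the extensions $\chi'$, $\psi'$ by a suitable linear character of $\mathcal{A}_\chi$, which is possible since $\mathcal{A}_\chi$ is abelian and these adjustments preserve all properties already achieved. Verifying the conditions of \cite[Definition 3.3.4]{Ros-Thesis} for $\isoc{\G^F}$ then yields Condition \ref{cond:cEBC} for $(\L,\lambda)$.
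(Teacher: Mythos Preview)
Your overall strategy matches the paper's, but there are two genuine problems and one vague spot.

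\textbf{The defect argument is wrong.} The index $|\wt{\G}^F:\G^F|$ is \emph{not} an $\ell'$-number under Hypothesis~\ref{hyp:Reduction}; for $\G=\SL_n$ and $\wt{\G}=\GL_n$ this index is $q-1$, which is divisible by $\ell$ whenever $e=1$. The paper instead compares $|\wt{\G}^F_\chi:\G^F|_\ell$ with $|\n_{\wt{\G}}(\L)^F_\psi:\n_\G(\L)^F|_\ell$ via the identification $\n_{\wt{\G}}(\L)^F_\psi=\n_{\wt{\G}}(\L)^F_\chi$ established inside Lemma~\ref{lem:Local star condition follows from Global star condition, central}; the two quotients are then literally isomorphic, hence have the same $\ell$-part.

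\textbf{Your construction of $\Omega^\G_{(\L,\lambda)}$ does not give $A$-equivariance.} You start from a $\wt{\G}^F$-transversal in $\mathcal{G}$, make choices of $\chi,\wt{\chi},\psi$, and extend by $\wt{\G}^F$-conjugation; but nothing forces those choices to be compatible with the action of the rest of $A=(\wt{\G}^F\mathcal{A})_{(\L,\lambda)}$, so ``equivariance then follows from the $(A\ltimes\mathcal{K})$-equivariance of $\wt{\Omega}^\G_{(\L,\lambda)}$'' is not an argument. The paper goes top-down: it fixes an $(A\ltimes\mathcal{K})$-transversal $\wt{\mathbb{T}}_{\rm glo}$ in $\wt{\mathcal{G}}$, applies Assumption~(iii) to each $\wt{\chi}\in\wt{\mathbb{T}}_{\rm glo}$ to produce $\chi\in\mathcal{G}$ below it, and then proves (using \cite[Proposition 4.10]{Ros-Generalized_HC_theory_for_Dade}) that the resulting $\mathbb{T}_{\rm glo}$ is an $A$-transversal in $\mathcal{G}$. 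Defining $\Omega^\G_{(\L,\lambda)}(\chi^x):=\psi^x$ for $x\in A$ then gives $A$-equivariance by fiat.

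\textbf{The factor-set comparison is where the work is, and your sketch skips the mechanism.} The factor sets coming out of \cite[Lemma 2.11]{Spa12} are not trivial; they are given by $\alpha_{\rm glo}(x_1x_2,y_1y_2)=\mu_{x_2}^{\rm glo}(y_1)$ where $\mu_{x_2}^{\rm glo}\in\irr(\wt{\G}^F_\chi/\G^F)$ is the twist determined by $\wh{\chi}=\mu_{x_2}^{\rm glo}\wh{\chi}^{x_2}$, and similarly on the local side. Matching them amounts to showing $(\mu^{\rm glo}_x)_{\n_{\wt{\G}}(\L)^F_\chi}=\mu^{\rm loc}_x$ for $x\in A_\chi$, and this is exactly where the $(A\ltimes\mathcal{K})$-equivariance of $\wt{\Omega}^\G_{(\L,\lambda)}$ enters: it forces the linear twists relating $\wt{\chi}$ to $\wt{\chi}^x$ and $\wt{\psi}$ to $\wt{\psi}^x$ to correspond under $\mathcal{K}\simeq\irr(\wt{\G}^F/\G^F)$. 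Your appeal to ``a standard regular-embedding argument'' and an adjustment by a linear character of $\mathcal{A}_\chi$ does not capture this; the cross terms (one factor in $\wt{\G}^F_\chi$, the other in $\mathcal{A}_\chi$) are precisely where the obstruction lives. Finally, the central-character condition for $\isoc{\G^F}$ is checked on $\c_{X_\chi}(\G^F)=\z(\wt{\G}^F)$ and follows directly from Assumption~(ii), not from any freedom in the choice of $\chi'$ or $\psi'$.
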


\begin{proof}
We start by fixing an $\left(A\ltimes \mathcal{K}\right)$-transversal $\wt{\mathbb{T}}_{\rm glo}$ in $\wt{\mathcal{G}}$. As $\wt{\Omega}_{(\L,\lambda)}^\G$ is $(A\ltimes\mathcal{K})$-equivariant, we deduce that the set $\wt{\mathbb{T}}_{\rm loc}:=\{\wt{\Omega}_{(\L,\lambda)}^\G(\wt{\chi})\mid \wt{\chi}\in\wt{\mathbb{T}}_{\rm glo}\}$ is an $(A\ltimes \mathcal{K})$-transversal in $\wt{\mathcal{N}}$. For every $\wt{\chi}\in\wt{\mathbb{T}}_{\rm glo}$, we chose a character $\chi\in\mathcal{G}\cap\irr(\wt{\chi}_{\G^F})$ satisfying Assumption \ref{ass:Assumption for the criterion, central} (iii). Denote by $\mathbb{T}_{\rm glo}$ the set of such characters $\chi$, where $\wt{\chi}$ runs over $\wt{\mathbb{T}}_{\rm glo}$. Similarly, for every $\wt{\psi}\in \wt{\mathbb{T}}_{\rm loc}$, fix a character $\psi\in\mathcal{N}\cap \irr(\wt{\psi}_{\n_\G(\L)^F})$ satisfying Assumption \ref{ass:Assumption for the criterion, central} (iv) and denote by $\mathbb{T}_{\rm loc}$ the set of such characters $\psi$. Observe that $\mathbb{T}_{\rm glo}$ (resp. $\mathbb{T}_{\rm loc}$) is an $A$-transversal in $\mathcal{G}$ (resp. in $\mathcal{N}$). In fact, if $\chi\in\mathcal{G}$, then let $\wt{\chi}\in\wt{\mathcal{G}}$ lying over $\chi$. Then there exists $\wt{\chi}_0\in\wt{\mathbb{T}}_{\rm glo}$ such that $\wt{\chi}_0=\wt{\chi}^{xz}$, for some $x\in A$ and $z\in\mathcal{K}$. Let $\chi_0\in\mathbb{T}_{\rm glo}$ correspond to $\wt{\chi}_0$ and observe that $\chi^x$ and $\chi_0$ lie under $\wt{\chi}_0$. By Clifford's theorem there exists $y\in\wt{\G}^F$ such that $\chi^{xy}=\chi_0$. Now $\chi\in\E(\G^F,(\L,\lambda))\cap \E(\G^F,(\L,\lambda)^{xy})$ and, by \cite[Proposition 4.10]{Ros-Generalized_HC_theory_for_Dade}, there exists $u\in\G^F$ such that $(\L,\lambda)=(\L,\lambda)^{xyu}$. Set $v:=xyu$ and notice that $\chi_0=\chi^v$ and that $v\in A$. Moreover, if $\chi_0=\chi^x$ for some $\chi,\chi_0\in\mathbb{T}_{\rm glo}$ and $x\in A$, then $\wt{\chi}_0$ and $\wt{\chi}^x$ lie over $\chi$, where $\wt{\chi}_0$ (resp. $\wt{\chi}$) is the element of $\wt{\mathbb{T}}_{\rm glo}$ corresponding to $\chi_0$ (resp. $\chi$). Therefore $\wt{\chi}_0=\wt{\chi}^{xz}$, for some $z\in\mathcal{K}$, which implies $\wt{\chi}_0=\wt{\chi}$ and so $\chi_0=\chi$. This shows that $\mathbb{T}_{\rm glo}$ is an $A$-transversal in $\mathcal{G}$. This argument also shows that, for every $\wt{\chi}\in\wt{\mathbb{T}}_{\rm glo}$, there exists a unique character $\chi\in\mathbb{T}_{\rm glo}\cap\irr(\wt{\chi}_{\G^F})$ and that, for every $\chi\in\mathbb{T}_{\rm glo}$, there exists a unique $\wt{\chi}\in\wt{\mathbb{T}}_{\rm glo}\cap \irr(\chi^{\wt{\G}^F})$. A similar argument shows that $\mathbb{T}_{\rm loc}$ is a transversal in $\mathcal{N}$ and that the correspondence between $\psi$ and $\wt{\psi}$ defines a bijection between $\mathbb{T}_{\rm loc}$ and $\wt{\mathbb{T}}_{\rm loc}$.

Now, setting
\[\Omega_{(\L,\lambda)}^\G\left(\chi^x\right):=\psi^x\]
for every $x\in A$ and $\chi\in\mathbb{T}_{\rm glo}$, where $\psi$ is the unique character in $\mathbb{T}_{\rm loc}$ lying below $\wt{\psi}:=\wt{\Omega}_{(\L,\lambda)}^\G(\wt{\chi})$ and $\wt{\chi}$ is the unique character in $\wt{\mathbb{T}}_{\rm glo}$ lying over $\chi$, defines an $A$-equivariant bijection between $\mathcal{G}$ and $\mathcal{N}$. By Assumption \ref{ass:Assumption for the criterion, central} (i.a) this means that $\Omega_{(\L,\lambda)}^\G$ is $\aut_\mathbb{F}(\G^F)_{(\L,\lambda)}$-equivariant.

To show that $\Omega_{(\L,\lambda)}^\G$ preserves the defect, we use Assumption \ref{ass:Assumption for the criterion, central} (i.b) and (i.c). Clearly it's enough to show that $d(\chi)=d(\psi)$, for $\chi\in\mathbb{T}_{\rm glo}$ and $\psi:=\Omega_{(\L,\lambda)}^\G(\chi)\in\mathbb{T}_{\rm loc}$. Let $\wt{\chi}$ (resp. $\wt{\psi}$) be the unique element of $\wt{\mathbb{T}}_{\rm glo}$ (resp. $\wt{\mathbb{T}}_{\rm loc}$) lying over $\chi$ (resp. $\psi$). Then $\wt{\Omega}_{(\L,\lambda)}^\G(\wt{\chi})=\wt{\psi}$ and $d(\wt{\chi})=d(\wt{\psi})$ by Assumption \ref{ass:Assumption for the criterion, central} (ii). Moreover, since $\wt{\G}^F/\G^F\simeq \n_{\wt{\G}}(\L)^F/\n_\G(\L)^F$ is abelian and using Assumption \ref{ass:Assumption for the criterion, central} (i.b) and (i.c), we deduce that the Clifford correspondent $\wh{\chi}\in\irr(\wt{\G}^F_\chi)$ of $\wt{\chi}$ over $\chi$ is an extension of $\chi$ and, similarly, that the Clifford correspondent $\wh{\psi}\in\irr(\n_{\wt{\G}}(\L)^F_\psi)$ of $\wt{\psi}$ over $\psi$ is an extension of $\psi$. As a consequence
\[\ell^{d(\chi)}=\ell^{d(\wh{\chi})}\cdot \left|\wt{\G}^F_\chi:\G^F\right|_\ell\]
and
\[\ell^{d(\psi)}=\ell^{d(\wh{\psi})}\cdot \left|\n_{\wt{\G}}(\L)^F_\psi:\n_\G(\L)^F\right|_\ell.\]
Therefore, as the defect is preserved by induction of characters, we obtain $d(\wh{\chi})=d(\wt{\chi})=d(\wt{\psi})=d(\wh{\psi})$ and it remains to show that $|\wt{\G}^F_\chi:\G^F|_\ell=|\n_{\wt{\G}}(\L)^F_\psi:\n_\G(\L)^F|_\ell$. This follows from the proof of Lemma \ref{lem:Local star condition follows from Global star condition, central}: in fact there it is shown that $\n_{\wt{\G}}(\L)^F_\psi=\n_{\wt{\G}}(\L)^F_\chi$ and therefore $\wt{\G}^F_\chi/\G^F\simeq \n_{\wt{\G}}(\L)^F_\chi/\n_\G(\L)^F=\n_{\wt{\G}}(\L)^F_\psi/\n_\G(\L)^F$.

Next, we prove the condition on character triples. Applying a simplified version of \cite[Theorem 5.3]{Spa17} adapted to $\G^F$-central isomorphic character triples (this immediately follows by the argument used in the proof of \cite[Theorem 5.3]{Spa17}), it is enough to show that
\begin{equation}
\label{eq:Criterion 1, central}
\left((\wt{\G}^F\mathcal{A})_{\chi},\G^F,\chi\right)\isoc{\G^F}\left((\wt{\G}^F\mathcal{A})_{\L,\psi},\n_\G(\L)^F,\Omega_{(\L,\lambda)}^\G(\chi)\right).
\end{equation}
Moreover, as the equivalence relation $\isoc{\G^F}$ is compatible with conjugation, it's enough to prove this condition for a fixed $\chi\in\mathbb{T}_{\rm glo}$ and $\psi:=\Omega_{(\L,\lambda)}^\G(\chi)\in\mathbb{T}_{\rm loc}$.

First of all, notice that the required group theoretical properties are satisfied by the proof of Lemma \ref{lem:Local star condition follows from Global star condition, central}. In fact, there we have shown that $(\wt{\G}^F\mathcal{A})_{\L,\chi}=(\wt{\G}^F\mathcal{A})_{\L,\psi}$ and that $(\wt{\G}^F\mathcal{A})_{\chi}=\G^F(\wt{\G}^F\mathcal{A})_{\L,\chi}$, while
\begin{align*}
\c_{\left(\wt{\G}^F\mathcal{A}\right)_{\chi}}\left(\G^F\right)\leq\c_{\left(\wt{\G}^F\mathcal{A}\right)_{\chi}}\left(\L^F\right)\leq \left(\wt{\G}^F\mathcal{A}\right)_{\L,\chi}= \left(\wt{\G}^F\mathcal{A}\right)_{\L,\psi}.
\end{align*}

To construct the relevant projective representations, we make use of \cite[Lemma 2.11]{Spa12}. As before, consider the corresponding $\wt{\chi}\in\wt{\mathbb{T}}_{\rm glo}$ and $\wt{\psi}\in\wt{\mathbb{T}}_{\rm loc}$ with $\wt{\Omega}_{(\L,\lambda)}^\G(\wt{\chi})=\wt{\psi}$, $\wt{\chi}$ lying over $\chi$ and $\wt{\psi}$ lying over $\psi$. Furthermore, consider the Clifford correspondent $\wh{\chi}\in\irr(\wt{\G}^F_\chi\mid \chi)$ of $\wt{\chi}$ and the Clifford correspondent $\wh{\psi}\in\irr(\n_{\wt{\G}}(\L)^F_\psi\mid\psi)$ of $\wt{\psi}$. Let $\wh{\mathcal{D}}_{\rm glo}$ be a representation affording $\wh{\chi}$ and notice that, by the choice of $\chi$ and using Assumption \ref{ass:Assumption for the criterion, central} (iii.b), there exists a representation $\mathcal{D}'_{\rm glo}$ affording an extension $\chi'\in\irr(\G^F\mathcal{A}_{\chi})$ of $\chi$. Similarly, let $\wh{\mathcal{D}}_{\rm loc}$ be a representation affording $\wh{\psi}$ and observe that, by the choice of $\psi$, there is a representation $\mathcal{D}'_{\rm loc}$ affording an extension $\psi'\in\irr((\G^F\mathcal{A})_{\L,\psi})$ of $\psi$. Applying \cite[Lemma 2.11]{Spa12} with $L:=\G^F$, $\wt{L}:=\wt{\G}^F_\chi$, $C:=\G^F\mathcal{A}_{\chi}$, $X:=(\wt{\G}^F\mathcal{A})_{\chi}$ and recalling that $X=\wt{L}C$ because Assumption \ref{ass:Assumption for the criterion, central} (iii.a) holds for $\chi$, we deduce that the map
\[\mathcal{P}_{\rm glo}:\left(\wt{\G}^F\mathcal{A}\right)_{\chi}\to\GL_{\chi(1)}(\mathbb{C})\]
given by $\mathcal{P}_{\rm glo}(x_1x_2):=\wh{\mathcal{D}}_{\rm glo}(x_1)\mathcal{D}'_{\rm glo}(x_2)$, for every $x_1\in \wt{\G}^F_\chi$ and $x_2\in\G^F\mathcal{A}_{\chi}$, is a projective representation associated with $\chi$ whose factor set $\alpha_{\rm glo}$ satisfies
\begin{equation}
\label{eq:Criterion 2, central}
\alpha_{\rm glo}(x_1x_2,y_1y_2)=\mu_{x_2}^{\rm glo}(y_1)
\end{equation}
for every $x_1,y_1\in\wt{\G}^F_\chi$ and $x_2,y_2\in\G^F\mathcal{A}_{\chi}$, where $\mu_{x_2}^{\rm glo}\in\irr(\wt{\G}^F_\chi/\G^F)$ is determined by the equality $\wh{\chi}=\mu_{x_2}^{\rm glo}\wh{\chi}^{x_2}$ via Gallagher's theorem. In a similar way, considering $L:=\n_\G(\L)^F$, $\wt{L}:=\n_{\wt{\G}}(\L)^F_\chi$, $C:=(\G^F\mathcal{A})_{\L,\chi}$, $X:=(\wt{\G}^F\mathcal{A})_{\L,\chi}$ and noticing that $X=\wt{L}C$ because Assumption \ref{ass:Assumption for the criterion, central} (iv.a) holds for $\psi$, we deduce that the map
\[\mathcal{P}_{\rm loc}:\left(\wt{\G}^F\mathcal{A}\right)_{\L,\chi}\to\GL_{\psi(1)}(\mathbb{C})\]
given by $\mathcal{P}_{\rm loc}(x_1x_2):=\wh{\mathcal{D}}_{\rm loc}(x_1)\mathcal{D}'_{\rm loc}(x_2)$, for every $x_1\in \n_{\wt{\G}}(\L)^F_\chi$ and $x_2\in(\G^F\mathcal{A})_{\L,\chi}$, is a projective representation associated with $\psi$ whose factor set $\alpha_{\rm loc}$ satisfies
\begin{equation}
\label{eq:Criterion 3, central}
\alpha_{\rm loc}(x_1x_2,Zy_1y_2)=\mu_{x_2}^{\rm loc}(y_1)
\end{equation}
for every $x_1,y_1\in\n_{\wt{\G}}(\L)^F_\chi$ and $x_2,y_2\in(\G^F\mathcal{A})_{\L,\chi}$, where $\mu_{x_2}^{\rm loc}\in\irr(\n_{\wt{\G}}(\L)^F_\chi/\n_\G(\L)^F)$ is determined by $\wh{\psi}=\mu_{x_2}^{\rm loc}\wh{\psi}^{x_2}$. In order to obtain the condition on factor sets required to prove \eqref{eq:Criterion 1, central} we have to show that the restriction of $\alpha_{\rm glo}$ to $(\G^F\mathcal{A})_{\L,\chi}\times (\G^F\mathcal{A})_{\L,\chi}$ coincides with $\alpha_{\rm loc}$. Using \eqref{eq:Criterion 2, central} and \eqref{eq:Criterion 3, central}, it is enough to show that
\[\left(\mu_x^{\rm glo}\right)_{\n_{\wt{\G}}(\L)^F_\chi}=\mu_x^{\rm loc}\]
for every $x\in(\G^F\mathcal{A})_{\L,\chi}$ and where $\wh{\chi}=\mu_x^{\rm glo}\wh{\chi}^x$ and $\wh{\psi}=\mu_x^{\rm loc}\wh{\psi}^x$. To prove this equality, since $(\G^F\mathcal{A})_{\L,\chi}=\n_\G(\L)^F A_\chi$ (see the proof of Lemma \ref{lem:Local star condition follows from Global star condition, central}), we may assume $x\in A_\chi$. Then, we conclude since $\wt{\Omega}_{(\L,\lambda)}^\G$ is $(A\ltimes \mathcal{K})$-equivariant.

To conclude we need to check one of the equivalent conditions of \cite[Lemma 3.4]{Spa17}. Recalling that $\c_{(\wt{\G}^F\mathcal{A})_{\chi}}(\G^F)=\z(\wt{\G}^F)$ by Assumption \ref{ass:Assumption for the criterion, central} (i.a), if $\zeta_{\rm glo}$ and $\zeta_{\rm loc}$ are the scalar functions of $\mathcal{P}_{\rm glo}$ and $\mathcal{P}_{\rm loc}$ respectively, we have to show that $\zeta_{\rm glo}$ and $\zeta_{\rm loc}$ coincide as characters of $\z(\wt{\G}^F)$. By the definition of $\mathcal{P}_{\rm glo}$, it follows that $\zeta_{\rm glo}$ coincides with the unique irreducible constituent $\nu$ of $\wh{\chi}_{\z(\wt{\G}^F)}$. Moreover, by Clifford theory we know that $\nu$ is also the unique irreducible constituent of $\wt{\chi}_{\z(\wt{\G}^F)}$. Therefore, we conclude that $\{\zeta_{\rm glo}\}=\irr(\wt{\chi}_{\z(\wt{\G}^F)})$ and a similar argument shows that $\{\zeta_{\rm loc}\}=\irr(\wt{\psi}_{\z(\wt{\G}^F)})$. Then, Assumption \ref{ass:Assumption for the criterion, central} (ii) implies that $\zeta_{\rm glo}=\zeta_{\rm loc}$. This completes the proof.
\end{proof}

\subsection{The criterion for Condition \ref{cond:Main iEBC}}
\label{sec:The criterion for iEBC}

We now prove a criterion for Condition \ref{cond:Main iEBC}. To do so, we sharpen the argument used in the proof of Theorem \ref{thm:Criterion, central}. As mentioned at the beginning of this section, some additional restrictions are required in order to deal with the necessary block theoretic demands. Recall that whenever $\ell\in\Gamma(\G,F)$, $\L$ is an $e$-split Levi subgroup of $\G$ and $\lambda\in\irr(\L^F)$, the induced block $\bl(\lambda)^{\G^F}$ is defined (see the comment preceding \cite[Proposition 4.9]{Ros-Generalized_HC_theory_for_Dade}).

\begin{ass}
\label{ass:Assumption for the criterion}
Let $(\L,\lambda)$ be an $e$-cuspidal pair of $\G$ and suppose that $B:=\bl(\lambda)^{\G^F}$ is defined. Set
\[\mathcal{G}:=\E\left(\G^F,(\L,\lambda)\right) \hspace{10pt}\text{and}\hspace{10pt}\mathcal{N}:=\irr\left(\n_\G(\L)^F\enspace\middle|\enspace \lambda\right)\]
and consider
\[\wt{\mathcal{G}}:=\irr\left(\wt{\G}^F\enspace\middle|\enspace \mathcal{G}\right)\hspace{10pt}\text{and}\hspace{10pt}\wt{\mathcal{N}}:=\irr\left(\n_{\wt{\G}}(\L)^F\enspace\middle|\enspace \mathcal{N}\right).\]
Assume that:
\begin{enumerate}
\item 
\begin{enumerate}[label=(\alph*)]
\item There is a semidirect decomposition $\wt{\G}^F\rtimes \mathcal{A}$, with $\mathcal{A}$ a finite abelian group, such that
\[\c_{(\wt{\G}^F\mathcal{A})_Z/Z}(\G^F/Z)=\z(\wt{\G}^F)/Z\hspace{15pt}\text{and} \hspace{15pt}(\wt{\G}^F\mathcal{A})_Z/\z(\wt{\G}^F)\simeq \aut(\G^F/Z)\]
for every $Z\leq \z(\G^F)$ (see \cite[Lemma 2.1]{Ros-Generalized_HC_theory_for_Dade});
\item Maximal extendibility holds with respect to $\G^F\unlhd \wt{\G}^F$;
\item Maximal extendibility holds with respect to $\n_\G(\L)^F\unlhd \n_{\wt{\G}}(\L)^F$.
\end{enumerate}
\item For $A:=(\wt{\G}^F\mathcal{A})_{(\L,\lambda)}$ there exists a defect preserving $(A\ltimes \mathcal{K})$-equivariant bijection
\[\wt{\Omega}_{(\L,\lambda)}^\G:\wt{\mathcal{G}}\to \wt{\mathcal{N}}\]
such that, for every $\wt{\chi}\in\wt{\mathcal{G}}$, the following conditions hold:
\begin{enumerate}[label=(\alph*)]
\item $\irr\left(\wt{\chi}_{\z(\wt{\G}^F)}\right)=\irr\left(\wt{\Omega}_{(\L,\lambda)}^\G(\wt{\chi})_{\z(\wt{\G}^F)}\right)$;
\item $\bl\left(\wt{\chi}\right)=\bl\left(\wt{\Omega}_{(\L,\lambda)}^\G\left(\wt{\chi}\right)\right)^{\wt{\G}^F}$.
\end{enumerate}
\item For every $\wt{\chi}\in\wt{\mathcal{G}}$ there exists $\chi\in\irr\left(\wt{\chi}_{\G^F}\right)$ such that:
\begin{enumerate}[label=(\alph*)]
\item $\left(\wt{\G}^F\mathcal{A}\right)_\chi=\wt{\G}^F_\chi \mathcal{A}_\chi$;
\item $\chi$ extends to $\chi'\in\irr\left(\G^F\mathcal{A}_\chi\right)$.
\end{enumerate}
\item For every $\wt{\psi}\in\wt{\mathcal{N}}$ there exists $\psi\in\mathcal{N}\cap \irr\left(\wt{\psi}_{\n_\G(\L)^F}\right)$ such that:
\begin{enumerate}[label=(\alph*)]
\item $\left(\wt{\G}^F \mathcal{A}\right)_{\L,\psi}=\n_{\wt{\G}}(\L)^F_\psi\left(\G^F\mathcal{A}\right)_{\L,\psi}$; 
\item $\psi$ extends to $\psi'\in\irr\left(\left(\G^F \mathcal{A}\right)_{\L,\psi}\right)$.
\end{enumerate}
\item Assume one of the following conditions:
\begin{enumerate}[label=(\alph*)]
\item ${\rm Out}(\G^F)_\mathcal{B}$ is abelian, where $\mathcal{B}$ is the $\wt{\G}^F$-orbit of $B$. In particular (iii) holds for every $\wt{\G}^F$-conjugate of $\chi$ (see the proof of \cite[Lemma 4.7]{Bro-Spa21}); or
\item for every subgroup $\G^F\leq J\leq\wt{\G}^F$ we have that every block $C\in\Bl(J\mid B)$ is $\wt{\G}^F$-invariant.
\end{enumerate}
\item The pair $(\L,\lambda)$ is $e$-Brauer--Lusztig-cuspidal in the sense of Definition \ref{def:e-BL-cuspidality}.
\end{enumerate}
\end{ass}

\begin{rmk}
\label{rmk:Condition on block stability}
Here we comment on Assumption \ref{ass:Assumption for the criterion}. First, observe that (v.a) holds for every block of $\G^F$ whenever $\G$ is a simple algebraic group not of type $\mathbf{A}$, $\mathbf{D}$ or $\mathbf{E}_6$. Next, notice that condition (v.b) holds for blocks of maximal defect (see \cite[Proposition 5.4]{Cab-Spa15} and observe that the proof of this result holds in general in our situation by \cite[Proposition 13.19]{Cab-Eng04}) and for unipotent blocks: if $B$ is a unipotent block of $\G^F$, then there exists a unipotent character $\chi\in\irr(B)$. By \cite[Proposition 13.20]{Dig-Mic91} we deduce that $\chi$ extends to a character $\wt{\chi}\in\irr(\wt{\G}^F)$. If $\G^F\leq J\leq \wt{\G}^F$ and $C$ is a block of $J$ that covers $B$, then we can find a character $\psi\in\irr(C)$ that lies above $\chi$. Since $\wt{\chi}_J$ is an irreducible character of $J$ lying above $\chi$, we deduce that $\psi=\wt{\chi}_J\wh{z}_J$ for some $z\in\mathcal{K}$ corresponding to $\wh{z}_{\wt{\G}}\in\irr(\wt{\G}^F/\G^F)$ and where $\wh{z}_J$ is the restriction of $\wh{z}_{\wt{\G}}$ to $J$. Then $\psi$ is $\wt{\G}^F$-invariant and therefore $C$ is $\wt{\G}^F$-invariant. This proves that (v.b) holds for unipotent blocks.

Next, we point out that the character $\chi$ from Assumption \ref{ass:Assumption for the criterion} (iii) is not required to lie in $\mathcal{G}$. In fact, if such a character $\chi$ exists, then a character with the same properties and lying in $\mathcal{G}$ can always be found under Assumption \ref{ass:Assumption for the criterion} (v)-(vi). To see this, fix $\wt{\chi}\in\wt{\mathcal{G}}$ and $\chi\in\irr(\wt{\chi}_{\G^F})$ satisfying Assumption \ref{ass:Assumption for the criterion} (iii). By the definition of $\wt{\mathcal{G}}$ there exists $\chi_0\in\irr(\wt{\chi}_{\G^F})\cap \mathcal{G}$. In particular $\chi$ and $\chi_0$ are $\wt{\G}^F$-conjugate. Now, if (v.a) holds, then all $\wt{\G}^F$-conjugates of $\chi$ satisfy Assumption \ref{ass:Assumption for the criterion} (iii.a) and (iii.b) according to the proof of \cite[Lemma 4.7]{Bro-Spa21}. Then $\chi_0$ is the character we were looking for. If (v.b) holds, then $B$ is $\wt{\G}^F$-invariant and, since $\bl(\chi_0)=B$, we deduce that $\bl(\chi)=B$. On the other hand, if $s$ is a semisimple element of $\L^{*F^*}$ such that $\lambda\in\E(\L^F,[s])$, then $\chi_0\in\mathcal{G}\subseteq \E(\G^F,[s])$ by \cite[Proposition 15.7]{Cab-Eng04}. Thus $\chi\in\E(\G^F,[s])$ by \cite[Proposition 15.6]{Cab-Eng04} and we conclude that $\chi\in\irr(B)\cap \E(\G^F,[s])=\mathcal{G}$ by applying Assumption \ref{ass:Assumption for the criterion} (vi).

Finally, as discussed in Section \ref{subsec:e-HC theory}, it is expected that Assumption \ref{ass:Assumption for the criterion} (vi) holds for every $e$-cuspidal pair under suitable assumptions on the prime $\ell$.
\end{rmk}

We now prove the criterion for Condition \ref{cond:Main iEBC}. Our argument makes use of the notion of Dade's \emph{ramification group}. For every block $b$ of a normal subgroup $N$ of $G$, Dade introduced a normal subgroup $G[b]$ of the subgroup $G_b$ such that $G[b]\leq G_\chi$ for every $\chi\in\irr(b)$. Here we use the following equivalent definition given by Murai in \cite{Mur13} (see also \cite[Definition 3.1]{Cab-Spa15}).

\begin{defin}
For every $N\unlhd G$ and $b\in\Bl(G)$ define
\[G[b]:=\left\lbrace g\in G_b\enspace\middle|\enspace\lambda_{b^{(g)}}\left(\cl_{\langle N,g\rangle}(h)^+\right)\neq 0,\text{ for some }h\in Ng\right\rbrace\]
where $b^{(g)}$ is any block of $\langle N,g\rangle$ covering $b$, $\lambda_{b^{(g)}}$ is the central character associated to $b^{(g)}$ and $\cl_{\langle N,g\rangle}(h)^+$ is the conjugacy class sum of $h$ in $\langle N,g\rangle$. It can be shown that this definition does not depend on the choices of the blocks $b^{(g)}$ covering $b$. 
\end{defin}

See \cite{Dad73}, \cite{Mur13} and \cite{Kos-Spa15} for further details on ramification groups.

Before proving the criterion for Condition \ref{cond:Main iEBC}, we need the following result in which we show how to choose transversals with good properties.

\begin{prop}
\label{prop:Criterion}
Assume Hypothesis \ref{hyp:Reduction} and Assumption \ref{ass:Assumption for the criterion}. Let $\wt{\mathbb{T}}_{\rm glo}$ be any $(A\ltimes \mathcal{K})$-transversal in $\wt{\mathcal{G}}$ and consider the $(A\ltimes\mathcal{K})$-transversal $\wt{\mathbb{T}}_{\rm loc}:=\{\wt{\Omega}_{(\L,\lambda)}^\G(\wt{\chi})\mid \wt{\chi}\in\wt{\mathbb{T}}_{\rm glo}\}$ in $\wt{\mathcal{N}}$. Then there exist $A$-transversals $\mathbb{T}_{\rm glo}$ in $\mathcal{G}$ and $\mathbb{T}_{\rm loc}$ in $\mathcal{N}$ with the following properties:
\begin{enumerate}
\item Every $\chi\in\mathbb{T}_{\rm glo}$ satisfies Assumption \ref{ass:Assumption for the criterion} (iii.a) and (iii.b);
\item Every $\psi\in\mathbb{T}_{\rm loc}$ satisfies Assumption \ref{ass:Assumption for the criterion} (iv.a) and (iv.b);
\item For every $\chi\in\mathbb{T}_{\rm glo}$ there exists a unique $\wt{\chi}\in\wt{\mathbb{T}}_{\rm glo}$ lying over $\chi$. Conversely $\chi$ is the only character of $\mathbb{T}_{\rm glo}$ lying under $\wt{\chi}$;
\item For every $\psi\in\mathbb{T}_{\rm loc}$ there exists a unique $\wt{\psi}\in\wt{\mathbb{T}}_{\rm loc}$ lying over $\psi$. Conversely $\psi$ is the only character of $\mathbb{T}_{\rm loc}$ lying under $\wt{\psi}$;
\item Let $\chi\in\mathbb{T}_{\rm glo}$ and $\psi\in\mathbb{T}_{\rm loc}$ such that $\wt{\Omega}_{(\L,\lambda)}^\G(\wt{\chi})=\wt{\psi}$, where $\wt{\chi}$ is the unique character of $\wt{\mathbb{T}}_{\rm glo}$ lying above $\chi$ and $\wt{\psi}$ is the unique character ot $\wt{\mathbb{T}}_{\rm loc}$ lying above $\psi$. Then
\[\bl\left(\wh{\chi}_J\right)=\bl\left(\wh{\psi}_{\n_{\wt{\G}}(\L)^F_\chi\cap J}\right)^J\]
for every $\G^F\leq J\leq \wt{\G}^F$, where $\wh{\chi}\in\irr(\wt{\G}^F_\chi)$ is the Clifford correspondent of $\wt{\chi}$ over $\chi$ and $\wh{\psi}\in\irr(\n_{\wt{\G}}(\L)^F_\psi)$ is the CLifford correspondent of $\wt{\psi}$ over $\psi$.
\end{enumerate}
\end{prop}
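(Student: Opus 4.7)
The plan is to lift the transversal construction from the proof of Theorem \ref{thm:Criterion, central} and upgrade it so as to also yield the block-compatibility (v), using the additional hypotheses in Assumption \ref{ass:Assumption for the criterion} (ii.b), (v) and (vi).

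For (i)--(iv), I would proceed exactly as in the proof of Theorem \ref{thm:Criterion, central}. For every $\wt\chi\in\wt{\mathbb{T}}_{\rm glo}$, pick $\chi\in\irr(\wt\chi_{\G^F})$ satisfying Assumption \ref{ass:Assumption for the criterion} (iii.a)--(iii.b); by Remark \ref{rmk:Condition on block stability}, under (v) and (vi) the character $\chi$ can be chosen inside $\mathcal{G}$. The resulting $\mathbb{T}_{\rm glo}$ is an $A$-transversal of $\mathcal{G}$: the argument turning a Clifford-theoretic $\wt\G^F$-conjugacy into an $A$-conjugacy fixing $(\L,\lambda)$ is identical to the one in Theorem \ref{thm:Criterion, central}, relying on \cite[Proposition 4.10]{Ros-Generalized_HC_theory_for_Dade}. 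Analogously, Assumption \ref{ass:Assumption for the criterion} (iv) produces a local $A$-transversal $\mathbb{T}_{\rm loc}\subseteq\mathcal{N}$. The uniqueness statements in (iii) and (iv) follow from this construction.

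To prove (v), first observe that Assumption \ref{ass:Assumption for the criterion} (i.b)--(i.c) ensures $\wt{\G}^F/\G^F$ and $\n_{\wt{\G}}(\L)^F/\n_\G(\L)^F$ are abelian, so the Clifford correspondents $\wh\chi$ and $\wh\psi$ are genuine \emph{extensions} of $\chi$ and $\psi$. Applying Lemma \ref{lem:Local star condition follows from Global star condition, central} with the equivariant bijection $\chi\mapsto\psi$ implicit in the construction of $\mathbb{T}_{\rm glo}$ and $\mathbb{T}_{\rm loc}$, we obtain $\n_{\wt{\G}}(\L)^F_\chi=\n_{\wt{\G}}(\L)^F_\psi$, so the two block expressions in (v) are defined on matching domains. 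By Assumption \ref{ass:Assumption for the criterion} (ii.b) and transitivity of block induction,
\[\bl\bigl(\wh\chi_J\bigr)^{\wt{\G}^F}=\bl(\wt\chi)=\bl(\wt\psi)^{\wt{\G}^F}=\bl\bigl(\wh\psi_{\n_{\wt{\G}}(\L)^F_\chi\cap J}\bigr)^{\wt{\G}^F}.\]

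The main obstacle — and the only step that genuinely requires the new block-theoretic hypothesis (v) of Assumption \ref{ass:Assumption for the criterion} — is to descend this equality from induced blocks in $\wt{\G}^F$ down to an equality of blocks of $J$. Under (v.b), any two blocks of $J$ covering $B$ and inducing to the same $\wt{\G}^F$-block must in fact be equal, and this suffices. Under (v.a), the outer-automorphism group $\out(\G^F)_\mathcal{B}$ is abelian and Dade's ramification group theory (in the form used in \cite{Mur13} and \cite{Kos-Spa15}) controls the fibres of block induction from $J$ to $\wt{\G}^F$, again yielding the required equality. Threading this block uniqueness back through every intermediate $J$ simultaneously, while using Fong--Reynolds to match Clifford correspondents and their block labels, completes the verification of (v).
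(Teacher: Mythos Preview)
Your treatment of parts (i)--(iv) and of the case (v.b) is essentially along the same lines as the paper, but the case (v.a) contains a genuine gap in the order of operations.

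Under Assumption \ref{ass:Assumption for the criterion} (v.a) you propose to first pick $\chi$ below $\wt\chi$ satisfying (iii.a)--(iii.b), and then invoke ``Dade's ramification group theory \ldots\ controls the fibres of block induction'' to force the block equality in (v). This does not work: ramification-group results such as \cite[Theorem 3.5]{Mur13} tell you that above a fixed block of $\wt\G^F[B]$ there is a \emph{unique} block of $J$, but they say nothing about where an \emph{arbitrarily} chosen $\chi$ lands. Concretely, there may be several blocks of $\wt\G^F[B]$ covering $B$ and covered by $\bl(\wt\chi)$, and your independent choice of $\chi$ need not pick the one matching $\bl(\wh\psi_{\n_{\wt\G}(\L)^F[C]})^{\wt\G^F[B]}$.

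The paper reverses the logic: starting from $\psi$, it sets $C_1:=\bl(\wh\psi_{\n_{\wt\G}(\L)^F[C]})$ and $B_1:=C_1^{\wt\G^F[B]}$, then \emph{chooses} $\chi_1\in\irr(B_1)$ lying under $\wt\chi$, and shows $\chi:=\chi_{1,\G^F}$ is irreducible and lies in $\mathcal G$ (using (vi)). Only now does (v.a) enter: since $\out(\G^F)_{\mathcal B}$ is abelian, every $\wt\G^F$-conjugate of a character satisfying (iii) again satisfies (iii), so this block-adapted $\chi$ automatically has properties (iii.a)--(iii.b). From the equality at the level of $\wt\G^F[B]$ one then bootstraps up to $\wt\G^F_\chi$ via Murai's uniqueness theorem, and finally down to each intermediate $J$ using that $\bl(\wh\chi_J)$ is $\wt\G^F_\chi$-stable. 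You should also not skip the verification that the induced blocks $\bl(\wh\psi_{\n_{\wt\G}(\L)^F_\chi\cap J})^J$ are defined and have the right covering properties; the paper does this via $\n_X(\L)=\n_X(E)$ for $E=\z^\circ(\L)^F_\ell$ and \cite[Theorem B]{Kos-Spa15}.
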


\begin{proof}
For every $\wt{\psi}\in \wt{\mathbb{T}}_{\rm loc}$ fix a character $\psi\in\mathcal{N}\cap \irr(\wt{\psi}_{\n_\G(\L)^F})$ satisfying Assumption \ref{ass:Assumption for the criterion} (iv) and denote by $\mathbb{T}_{\rm loc}$ the set of such characters $\psi$, while $\wt{\psi}$ runs over $\wt{\mathbb{T}}_{\rm loc}$. As proved in Theorem \ref{thm:Criterion, central}, the set $\mathbb{T}_{\rm loc}$ is an $A$-transversal in $\mathcal{N}$ satisfying (iv) above. Next, for every $\wt{\chi}\in\wt{\mathbb{T}}_{\rm glo}$, we are going to find a character $\chi\in\mathcal{G}\cap \irr(\wt{\chi}_{\G^F})$ satisfying Assumption \ref{ass:Assumption for the criterion} (iii.a) and (iii.b) and such that
\begin{equation}
\label{eq:Criterion 0}
\bl\left(\wh{\chi}_J\right)=\bl\left(\wh{\psi}_{\n_{\wt{\G}}(\L)^F_\chi\cap J}\right)^J
\end{equation}
for every $\G^F\leq J\leq \wt{\G}^F_\chi$ and where $\wh{\chi}\in\irr(\wt{\G}^F_\chi\mid \chi)$ is the Clifford correspondent of $\wt{\chi}$ over $\chi$ and $\wh{\psi}\in\irr(\n_{\wt{\G}}(\L)^F_\psi\mid \psi)$ is the Clifford correspondent of $\wt{\psi}$ over $\psi$ with $\wt{\psi}:=\wt{\Omega}_{(\L,\lambda)}^\G(\wt{\chi})$ and $\psi\in\mathbb{T}_{\rm loc}$ corresponding to $\wt{\psi}$. Then, as shown in the proof of Theorem \ref{thm:Criterion, central}, the set $\mathbb{T}_{\rm glo}$ of such characters $\chi$ while $\wt{\chi}$ runs over $\wt{\mathbb{T}}_{\rm glo}$ will be an $A$-transversal in $\mathcal{G}$ satisfying (iii) above. Moreover (v) will be satisfied by our choice. 

We first prove the claim assuming Assumption \ref{ass:Assumption for the criterion} (v.a). We start by showing that, for every $\wt{\chi}\in\wt{\mathbb{T}}_{\rm glo}$, there exists a character $\chi\in\mathcal{G}\cap \irr(\wt{\chi}_{\G^F})$ such that
\begin{equation}
\label{eq:Criterion 1}
\bl\left(\wh{\chi}_{\wt{\G}^F[B]}\right)=\bl\left(\wh{\psi}_{\n_{\wt{\G}}(\L)^F[C]}\right)^{\wt{\G}^F[B]},
\end{equation}
where $\wh{\chi}\in\irr(\wt{\G}^F_\chi\mid \chi)$ is the Clifford correspondent of $\wt{\chi}$ over $\chi$ and $\wh{\psi}\in\irr(\n_{\wt{\G}}(\L)^F_\psi\mid \psi)$ is the Clifford correspondent of $\wt{\psi}$ over $\psi$ with $\wt{\psi}:=\wt{\Omega}_{(\L,\lambda)}^\G(\wt{\chi})$ and $\psi\in\mathbb{T}_{\rm loc}$ corresponding to $\wt{\psi}$ and $C:=\bl(\psi)$. Notice that, as pointed out in Remark \ref{rmk:Condition on block stability}, under Assumption \ref{ass:Assumption for the criterion} (v.a) such a character $\chi$ automatically satisfies Assumption \ref{ass:Assumption for the criterion} (iii.a) and (iii.b).

Set $b:=\bl(\lambda)$ and recall that, as every block of $\n_\G(\L)^F$ is $\L^F$-regular (see \cite[Lemma 6.5]{Ros-Generalized_HC_theory_for_Dade}), $C$ must coincide with $b^{\n_\G(\L)^F}$ and therefore $C^{\G^F}=b^{\G^F}=B$. Moreover, for $E:=\z^\circ(\L)_\ell^F$, we have $\n_X(\L)=\n_X(E)$ for every $\G^F\leq X\leq \wt{\G}^F$ (see \cite[Proposition 2.6]{Ros-Generalized_HC_theory_for_Dade}). Then, for every $\G^F\leq Y\leq X\leq \wt{\G}^F$, every $C_0\in\Bl(\n_Y(\L))$ and $C_1\in\Bl(\n_X(\L)\mid C_0)$, the induced block $B_1:=C_1^{X}$ is well defined and covers $C_0^Y$ (see \cite[Theorem B]{Kos-Spa15}): in fact for a defect group $D\in\delta(C_0)$ we have $E\leq \O_\ell(\n_Y(\L)^F)\leq D$ and hence $\c_X(D)\leq \n_X(E)=\n_X(\L)$.

Consider $\wt{C}:=\bl(\wt{\psi})$, $\wt{B}:=\bl(\wt{\chi})$ and recall that $\wt{B}=(\wt{C})^{\wt{\G}^F}$ by Assumption \ref{ass:Assumption for the criterion} (ii.b). Notice that $\wt{\G}^F[B]=\n_{\wt{\G}}(\L)^F[C]\cdot \G^F$ (see \cite[Lemma 3.2 (c) and Lemma 3.6]{Kos-Spa15}) and set $C_1:=\bl(\wh{\psi}_{\n_{\wt{\G}}(\L)^F[C]})$ and $B_1:=C_1^{\wt{\G}^F[B]}$. By the previous paragraph (applied with $Y=\G^F$ and $X=\wt{\G}^F[B]$) the block $B_1$ covers $B$ and the exact same argument (applied with $Y=\wt{\G}^F[B]$ and $X=\wt{\G}^F$) can be used to show that $\wt{B}$ covers $B_1$. In particular there exists $\chi_1\in\irr(B_1)$ lying under $\wt{\chi}$. We claim that $\chi_{1,\G^F}$ is irreducible and lies in $\mathcal{G}$. If $\chi$ is an irreducible constituent of $\chi_{1,\G^F}$, then $B_1$ covers $\bl(\chi)$. As $B$ is $\wt{\G}^F[B]$-invariant, we conclude that $\bl(\chi)=B$. Then $\wt{\G}^F[B]\leq \wt{\G}^F_\chi$ and Assumption \ref{ass:Assumption for the criterion} (i.b) implies that $\chi_{1,\G^F}=\chi$. Furthermore, since for every $\G^F\leq J\leq \wt{\G}^F_\chi$ there exists a unique irreducible character of $J$ lying over $\chi$ and under $\wt{\chi}$, we conclude that $\chi_1=\wh{\chi}_{\wt{\G}^F[B]}$, where $\wh{\chi}\in\irr(\wt{\G}^F_\chi)$ is the Clifford correspondent of $\wt{\chi}$ over $\chi$. To conclude, since $\wt{\chi}\in\wt{\mathcal{G}}$ covers $\chi_1$ and hence $\chi$, \cite[Proposition 15.6]{Cab-Eng04} implies that $\chi\in\E(\G^F,[s])$ where $s\in\L^{*F^*}_{\rm ss}$ such that $\lambda\in\E(\L^F,[s])$. By Assumption \ref{ass:Assumption for the criterion} (vi) we conclude that $\chi\in\mathcal{G}\cap \irr(\wt{\chi}_{\G^F})$ and satisfies \eqref{eq:Criterion 1}.

Next, we deduce \eqref{eq:Criterion 0} from \eqref{eq:Criterion 1}. First, since $\bl(\wh{\psi}_{\n_{\wt{\G}}(\L)^F[C]})$ is covered by $\bl(\wh{\psi})$, by the same argument used before (applied with $Y=\wt{\G}^F[B]$ and $X=\wt{\G}^F_\chi$) we deduce that $\bl(\wh{\psi}_{\n_{\wt{\G}}(\L)^F[C]})^{\wt{\G}^F[B]}=\bl(\wh{\chi}_{\wt{\G}^F[B]})$ is covered by $\bl(\wh{\psi})^{\wt{\G}^F_\chi}$. Since $\wt{\G}^F_\chi$ has a unique block that covers $\bl(\wh{\chi}_{\wt{\G}^F[B]})$ (see \cite[Theorem 3.5]{Mur13}), we conclude that $\bl(\wh{\psi})^{\wt{\G}^F_\chi}=\bl(\wh{\chi})$. Finally, for $\G^F\leq J\leq \wt{\G}^F_\chi$, observe that $\bl(\wh{\chi}_J)$ is $\wt{\G}^F_\chi$-stable and therefore it is the unique block of $J$ covered by $\bl(\wh{\chi})$. Since, again by using the previous argument (applied with $Y=J$ and $X=\wt{\G}^F_\chi$), $\bl(\wh{\psi}_{\n_{\wt{\G}}(\L)^F_\chi\cap J})^J$ is covered by $\bl(\wh{\psi})^{\wt{\G}^F_\chi}=\bl(\wh{\chi})$ we conclude that $\chi$ is a character of $\irr(\wt{\chi}_{\G^F})\cap \mathcal{G}$ satisfying Assumption \ref{ass:Assumption for the criterion} (iii.a) and (iii.b) and such that \eqref{eq:Criterion 0} holds. This proves the claim under Assumption \ref{ass:Assumption for the criterion} (v.a).

We now prove the claim under Assumption \ref{ass:Assumption for the criterion} (v.b). Consider $\chi\in\irr(\wt{\chi}_{\G^F})$ satisfying Assumption \ref{ass:Assumption for the criterion} (iii) and notice that, as shown in Remark \ref{rmk:Condition on block stability}, under Assumption \ref{ass:Assumption for the criterion} (v.b) we automatically have $\chi\in\mathcal{G}$. As shown in the previous part, the block $\wh{B}:=\bl(\wh{\psi})^{\wt{\G}^F_\chi}$ is covered by $\wt{B}:=\bl(\wt{\chi})$ and covers $B$. Since $\wt{B}$ covers $\wh{B}$, we deduce that $\wh{B}$ and $\bl(\wh{\chi})$ are $\wt{\G}^F$-conjugate. On the other hand our assumption implies that $\wh{B}$ is $\wt{\G}^F$-stable and therefore coincides with $\bl(\wh{\chi})$. This shows that $\bl(\wh{\chi})=\bl(\wh{\psi})^{\wt{\G}^F_\chi}$ and, arguing as in the final part of the previous paragraph, we conclude that \eqref{eq:Criterion 0} holds. This completes the proof.
\end{proof}

We can finally prove the criterion for Condition \ref{cond:Main iEBC}.

\begin{theo}
\label{thm:Criterion}
Assume Hypothesis \ref{hyp:Reduction} and Assumption \ref{ass:Assumption for the criterion} with respect to the $e$-cuspidal pair $(\L,\lambda)$. Then Condition \ref{cond:Main iEBC} holds for $(\L,\lambda)$ and $\G$.
\end{theo}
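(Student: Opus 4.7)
The argument adapts the scaffolding of the proof of Theorem \ref{thm:Criterion, central}, using Proposition \ref{prop:Criterion} to inject the block-theoretic data needed to upgrade $\isoc{\G^F}$ to the stronger $\iso{\G^F}$.

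First I would apply Proposition \ref{prop:Criterion} to obtain an $(A\ltimes\mathcal{K})$-transversal $\wt{\mathbb{T}}_{\rm glo}$ in $\wt{\mathcal{G}}$, the paired transversal $\wt{\mathbb{T}}_{\rm loc}=\{\wt{\Omega}^\G_{(\L,\lambda)}(\wt{\chi})\mid \wt{\chi}\in\wt{\mathbb{T}}_{\rm glo}\}$ in $\wt{\mathcal{N}}$, and $A$-transversals $\mathbb{T}_{\rm glo}\subseteq\mathcal{G}$ and $\mathbb{T}_{\rm loc}\subseteq\mathcal{N}$ enjoying properties (i)--(v) of that proposition. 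I would then define $\Omega^\G_{(\L,\lambda)}\colon\mathcal{G}\to\mathcal{N}$ by $\Omega^\G_{(\L,\lambda)}(\chi^x):=\psi^x$ for $\chi\in\mathbb{T}_{\rm glo}$ and $x\in A$, where $\psi\in\mathbb{T}_{\rm loc}$ is the partner of $\chi$ under the bijection $\wt{\chi}\mapsto\wt{\psi}:=\wt{\Omega}^\G_{(\L,\lambda)}(\wt{\chi})$ between $\wt{\mathbb{T}}_{\rm glo}$ and $\wt{\mathbb{T}}_{\rm loc}$. Since items (i)--(iv) of Assumption \ref{ass:Assumption for the criterion} mirror those of Assumption \ref{ass:Assumption for the criterion, central}, the verification that $\Omega^\G_{(\L,\lambda)}$ is a well-defined $\aut_\mathbb{F}(\G^F)_{(\L,\lambda)}$-equivariant defect-preserving bijection proceeds verbatim as in the proof of Theorem \ref{thm:Criterion, central}, invoking in particular Lemma \ref{lem:Local star condition follows from Global star condition, central} to obtain $\wt{\G}^F_\chi/\G^F\simeq\n_{\wt{\G}}(\L)^F_\psi/\n_\G(\L)^F$.

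For the character-triple statement, by \cite[Theorem 5.3]{Spa17} it suffices to establish
\[\left((\wt{\G}^F\mathcal{A})_\chi,\G^F,\chi\right)\iso{\G^F}\left((\wt{\G}^F\mathcal{A})_{\L,\psi},\n_\G(\L)^F,\psi\right)\]
for a single pair $\chi\in\mathbb{T}_{\rm glo}$, $\psi:=\Omega^\G_{(\L,\lambda)}(\chi)$. The group-theoretic requirements are already delivered by Lemma \ref{lem:Local star condition follows from Global star condition, central}. I would then construct projective representations $\mathcal{P}_{\rm glo}$ of $(\wt{\G}^F\mathcal{A})_\chi$ and $\mathcal{P}_{\rm loc}$ of $(\wt{\G}^F\mathcal{A})_{\L,\psi}$ by applying \cite[Lemma 2.11]{Spa12} to the Clifford correspondents $\wh{\chi}\in\irr(\wt{\G}^F_\chi)$ and $\wh{\psi}\in\irr(\n_{\wt{\G}}(\L)^F_\psi)$ together with the extensions supplied by Assumption \ref{ass:Assumption for the criterion} (iii.b) and (iv.b), exactly as in the proof of Theorem \ref{thm:Criterion, central}. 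Compatibility of their factor sets on $(\G^F\mathcal{A})_{\L,\chi}\times(\G^F\mathcal{A})_{\L,\chi}$ reduces, as before, to the $(A\ltimes\mathcal{K})$-equivariance of $\wt{\Omega}^\G_{(\L,\lambda)}$, and the equality of the scalar functions on $\z(\wt{\G}^F)$ is secured by Assumption \ref{ass:Assumption for the criterion} (ii.a).

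The genuinely new ingredient, and the main obstacle, is the verification of the block-theoretic condition that distinguishes $\iso{\G^F}$ from $\isoc{\G^F}$. By the explicit shape $\mathcal{P}_{\rm glo}(x_1x_2)=\wh{\mathcal{D}}_{\rm glo}(x_1)\mathcal{D}'_{\rm glo}(x_2)$ of the global projective representation, and the analogous shape of $\mathcal{P}_{\rm loc}$, the block attached to the restriction of $\mathcal{P}_{\rm glo}$ to an intermediate subgroup $\G^F\leq J\leq\wt{\G}^F_\chi$ is $\bl(\wh{\chi}_J)$, while the block attached to the corresponding restriction of $\mathcal{P}_{\rm loc}$ is $\bl(\wh{\psi}_{\n_{\wt{\G}}(\L)^F_\chi\cap J})$. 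Hence the block-compatibility requirement becomes
\[\bl(\wh{\chi}_J)=\bl\left(\wh{\psi}_{\n_{\wt{\G}}(\L)^F_\chi\cap J}\right)^J\qquad\text{for every }\G^F\leq J\leq\wt{\G}^F_\chi,\]
which is precisely item (v) of Proposition \ref{prop:Criterion}. This is where Assumption \ref{ass:Assumption for the criterion} (v)--(vi) enters the argument, through the Clifford-theoretic and Dade-ramification analysis carried out in Proposition \ref{prop:Criterion}: either the $\wt{\G}^F$-stability of blocks covering $B$ (case (v.b)), or the abelianness of $\out(\G^F)_\mathcal{B}$ combined with the $e$-Brauer--Lusztig-cuspidality of $(\L,\lambda)$ (case (v.a)), allows us to match induced blocks across all intermediate subgroups via the containment $\wt{\G}^F[B]=\n_{\wt{\G}}(\L)^F[C]\cdot\G^F$ and Murai's description of the ramification group. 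Once this equality is in hand, the standard characterisation of $\G^F$-block isomorphism of character triples is satisfied, and Condition \ref{cond:Main iEBC} follows for $(\L,\lambda)$ and $\G$.
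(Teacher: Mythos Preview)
Your overall architecture is correct and matches the paper's: pull transversals from Proposition \ref{prop:Criterion}, define $\Omega^\G_{(\L,\lambda)}$ exactly as in Theorem \ref{thm:Criterion, central}, and recycle that proof for equivariance, defect preservation, and the central-isomorphism part of the character-triple relation. Where you diverge is in the final step, and there is a genuine gap.

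You assert that the block condition distinguishing $\iso{\G^F}$ from $\isoc{\G^F}$ reduces to the equality $\bl(\wh{\chi}_J)=\bl(\wh{\psi}_{\n_{\wt{\G}}(\L)^F_\chi\cap J})^J$ for $\G^F\leq J\leq \wt{\G}^F_\chi$, i.e.\ to Proposition \ref{prop:Criterion} (v), and that this alone suffices. It does not. The block condition in \cite[Definition 3.6]{Spa17} is formulated in the central extension of $(\wt{\G}^F\mathcal{A})_\chi$ (more precisely of $(\wt{\G}^F\mathcal{A})_\chi/Z$ with $Z=\ker(\chi_{\z(\G^F)})$) determined by the factor set of $\mathcal{P}_{\rm glo}$, and must be checked for \emph{all} intermediate subgroups there, including those that sit over the automorphism part $(\G^F\mathcal{A})_\chi$ where $\mathcal{P}_{\rm glo}$ is only projective. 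Proposition \ref{prop:Criterion} (v) only controls the restriction to $H_{\rm glo}=\epsilon^{-1}(\wt{\G}^F_\chi/Z)$, where the projective representation is genuine.

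The paper closes this gap with substantial additional work: it passes to the quotient by $Z$, builds the central extension $A_{\rm glo}$ and the characters $\chi_1,\psi_1$ afforded by the lifted representations, invokes \cite[Theorem C]{Kos-Spa15} to produce an auxiliary character $\varphi_1$ on the ramification group $A_{\rm glo}[B_0]$ with the right block behaviour, then uses \cite[Lemma 3.2]{Cab-Spa15} to manufacture a new extension $\chi_2$ of $\chi_{1,H_{\rm glo}}$ to all of $A_{\rm glo}$ whose blocks match those of $\varphi_1$ on $A_{\rm glo}[B_0]$, and finally extends the block equality to every $(\G^F/Z)_0\leq J\leq A_{\rm glo}$ via \cite[Theorem 3.5]{Mur13}. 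Only then does \cite[Lemma 3.10]{Spa17} apply. Your sketch jumps from Proposition \ref{prop:Criterion} (v) directly to the conclusion, omitting this entire layer; without it the argument does not go through.
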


\begin{proof}
Choose transversals $\wt{\mathbb{T}}_{\rm glo}$, $\wt{\mathbb{T}}_{\rm loc}$, $\mathbb{T}_{\rm glo}$ and $\mathbb{T}_{\rm loc}$ as in Proposition \ref{prop:Criterion}. As in the proof of Theorem \ref{thm:Criterion, central}, setting
\[\Omega_{(\L,\lambda)}^\G\left(\chi^x\right):=\psi^x\]
for every $x\in A$ and $\chi\in\mathbb{T}_{\rm glo}$, where $\psi$ is the unique character in $\mathbb{T}_{\rm loc}$ lying below $\wt{\psi}:=\wt{\Omega}_{(\L,\lambda)}^\G(\wt{\chi})$ and $\wt{\chi}$ is the unique character in $\wt{\mathbb{T}}_{\rm glo}$ lying over $\chi$, defines an $A$-equivariant bijection between $\mathcal{G}$ and $\mathcal{N}$. By Assumption \ref{ass:Assumption for the criterion} (i.a) this means that $\Omega_{(\L,\lambda)}^\G$ is $\aut(\G^F)_{(\L,\lambda)}$-equivariant.

The argument used in the proof of Theorem \ref{thm:Criterion, central} shows that $\Omega_{(\L,\lambda)}^\G$ is defect preserving and that
\[\ker\left(\chi_{\z\left(\G^F\right)}\right)=\ker\left(\Omega_{(\L,\lambda)}^\G(\chi)_{\z\left(\G^F\right)}\right)\]
for every $\chi\in\mathcal{G}$. By \cite[Theorem 5.3]{Spa17}, we deduce that to conclude the proof it's enough to show that
\begin{equation}
\label{eq:Criterion 2}
\left((\wt{\G}^F\mathcal{A})_\chi/Z,\G^F/Z,\overline{\chi}\right)\iso{\G^F/Z}\left((\wt{\G}^F\mathcal{A})_{\L,\chi}/Z,\n_\G(\L)^F/Z,\overline{\psi}\right),
\end{equation}
for every $\chi\in\mathcal{G}$, $\psi:=\Omega_{(\L,\lambda)}^\G(\chi)$ and where $Z:=\ker(\chi_{\z(\G^F)})$ while $\overline{\chi}$ and $\overline{\psi}$ correspond to $\chi$ and $\psi$ respectively via inflation of characters. Moreover, as the equivalence relation $\iso{\G^F/Z}$ is compatible with conjugation, it is enough to prove \eqref{eq:Criterion 2} for a fixed $\chi\in\mathbb{T}_{\rm glo}$ and $\psi:=\Omega_{(\L,\lambda)}^\G(\chi)\in\mathbb{T}_{\rm loc}$. As before, consider the corresponding $\wt{\chi}\in\wt{\mathbb{T}}_{\rm glo}$ and $\wt{\psi}\in\wt{\mathbb{T}}_{\rm loc}$ with $\wt{\Omega}_{(\L,\lambda)}^\G(\wt{\chi})=\wt{\psi}$, $\wt{\chi}$ lying over $\chi$ and $\wt{\psi}$ lying over $\psi$. Furthermore, consider the Clifford correspondent $\wh{\chi}\in\irr(\wt{\G}^F_\chi\mid \chi)$ of $\wt{\chi}$ and the Clifford correspondent $\wh{\psi}\in\irr(\n_{\wt{\G}}(\L)^F_\psi\mid\psi)$ of $\wt{\psi}$. 

Proceeding as in the proof of Theorem \ref{thm:Criterion, central}, we can construct a projective representation associated with $\overline{\chi}$
\[\overline{\mathcal{P}}_{\rm glo}:\left(\wt{\G}^F\mathcal{A}\right)_\chi/Z\to\GL_{\chi(1)}(\mathbb{C})\]
given by $\overline{\mathcal{P}}_{\rm glo}(Zx_1x_2):=\wh{\mathcal{D}}_{\rm glo}(x_1)\mathcal{D}'_{\rm glo}(x_2)$ for every $x_1\in \wt{\G}^F_\chi$ and $x_2\in(\G^F\mathcal{A})_\chi$. Similarly, we obtain a projective representation associated with $\overline{\psi}$
\[\overline{\mathcal{P}}_{\rm loc}:\left(\wt{\G}^F\mathcal{A}\right)_{\L,\chi}/Z\to\GL_{\psi(1)}(\mathbb{C})\]
given by $\overline{\mathcal{P}}_{\rm loc}(Zx_1x_2):=\wh{\mathcal{D}}_{\rm loc}(x_1)\mathcal{D}'_{\rm loc}(x_2)$ for every $x_1\in \n_{\wt{\G}}(\L)^F_\psi$ and $x_2\in(\G^F\mathcal{A})_{\L,\psi}$. Moreover, by the proof of Theorem \ref{thm:Criterion, central}, we know that
\[\left((\wt{\G}^F\mathcal{A})_\chi/Z,\G^F/Z,\overline{\chi}\right)\isoc{\G^F/Z}\left((\wt{\G}^F\mathcal{A})_{\L,\psi}/Z,\n_\G(\L)^F/Z,\overline{\psi}\right)\]
via the projective representations $(\overline{\mathcal{P}}_{\rm glo},\overline{\mathcal{P}}_{\rm loc})$. Consider the factor sets $\overline{\alpha}_{\rm glo}$ of $\overline{\mathcal{P}}_{\rm glo}$ and $\overline{\alpha}_{\rm loc}$ of $\overline{\mathcal{P}}_{\rm loc}$. Let $S$ be the group generated by the values of $\overline{\alpha}_{\rm glo}$ and denote by $A_{\rm glo}$ the central extension of $(\wt{\G}^F\mathcal{A})_\chi/Z$ by $S$ induced by $\overline{\alpha}_{\rm glo}$. Let $\epsilon:A_{\rm glo}\to(\wt{\G}^F\mathcal{A})_\chi/Z$ be the canonical morphism with kernel $S$. As $\overline{\alpha}_{\rm glo}$ is trivial on $(\wt{\G}^F_\chi/Z)\times (\wt{\G}^F_\chi/Z)$, every subgroup $X\leq \wt{\G}^F_\chi/Z$ is isomorphic to the subgroup $X_0:=\{(x,1)\mid x\in X\}$ of $A_{\rm glo}$ and $\epsilon^{-1}(X)=X_0\times S$. In particular, we have $H_{\rm glo}:=\epsilon^{-1}\left(\wt{\G}^F_\chi/Z\right)=(\wt{\G}^F_\chi/Z)_0\times S$. The map given by
\[\mathcal{Q}_{\rm glo}(x,s):=s\overline{\mathcal{P}}_{\rm glo}(x),\]
for every $s\in S$ and $x\in (\wt{\G}^F\mathcal{A})_\chi/Z$, is an irreducible representation of $A_{\rm glo}$ affording an extension $\chi_1$ of the character $\overline{\chi}_0$ of $(\G^F/Z)_0$ corresponding to $\overline{\chi}$. Notice that
\begin{equation}
\label{eq:Criterion 3}
\chi_{1,H_{\rm glo}}=\left(\overline{\wh{\chi}}\right)_0\times \iota,
\end{equation}
where $\iota(s):=s$ and $(\overline{\wh{\chi}})_0$ is the character of $(\wt{\G}^F_\chi/Z)_0$ corresponding to $\overline{\wh{\chi}}\in\irr(\wt{\G}^F_\chi/Z)$. Next, set $A_{\rm loc}:=\epsilon^{-1}((\wt{\G}^F\mathcal{A})^F_{\L,\chi}/Z)$ and notice that, because the factor set $\overline{\alpha}_{\rm loc}$ of $\overline{\mathcal{P}}_{\rm loc}$ is the restriction of the factor set $\overline{\alpha}_{\rm glo}$ of $\overline{\mathcal{P}}_{\rm glo}$, the map given by
\[\mathcal{Q}_{\rm loc}(x,s):=s\overline{\mathcal{P}}_{\rm loc}(x),\]
for every $s\in S$ and $x\in (\wt{\G}^F\mathcal{A})^F_{\L,\chi}/Z$, is an irreducible representation of $A_{\rm loc}$ affording an extension $\psi_1$ of the character $\overline{\psi}_0$ of $(\n_\G(\L)^F/Z)_0$ corresponding to $\overline{\psi}$. As before, we have
\begin{equation}
\label{eq:Criterion 4}
\psi_{1,H_{\rm loc}}=\left(\overline{\wh{\psi}}\right)_0\times \iota,
\end{equation}
where $H_{\rm loc}:=\epsilon^{-1}(\n_{\wt{\G}}(\L)^F_\chi/Z)=(\n_{\wt{\G}}(\L)^F_\chi/Z)_0\times S$ and $(\overline{\wh{\psi}})_0$ is the character of $(\n_{\wt{\G}}(\L)^F_\chi/Z)_0$ corresponding to $\overline{\wh{\psi}}\in\irr(\n_{\wt{\G}}(\L)^F_\chi/Z)$. Now, \eqref{eq:Criterion 3}, \eqref{eq:Criterion 4} and \eqref{eq:Criterion 0} imply that
\begin{equation}
\label{eq:Criterion 5}
\bl\left(\chi_{1,J}\right)=\bl\left(\psi_{1,J\cap H_{\rm glo}}\right)^J
\end{equation}
for every $(\G^F/Z)_0\leq J\leq H_{\rm glo}$ (see the argument at the end of the proof of \cite[Proposition 4.2]{Cab-Spa15}). By \cite[Theorem C]{Kos-Spa15} there exists $\varphi_1\in\irr(A_{\rm glo}[B_0])$ such that $\varphi_{1,(\G^F/Z)_0}$ is irreducible and lies in the block $B_0$ and
\begin{equation}
\label{eq:Criterion 6}
\bl\left(\varphi_{1,J}\right)=\bl\left(\psi_{1,J\cap A_{\rm loc}}\right)^J
\end{equation}
for every $(\G^F/Z)_0\leq J\leq A_{\rm glo}[B_0]$. It follows from \eqref{eq:Criterion 5} and \eqref{eq:Criterion 6} that
\[\bl\left(\varphi_{1,J}\right)=\bl\left(\psi_{1,J\cap H_{\rm loc}}\right)^J=\bl\left(\chi_{1,J}\right)\]
for every $(\G^F/Z)_0\leq J\leq H_{\rm glo}[B_0]=H_{\rm glo}\cap A_{\rm glo}[B_0]$. In particular $B_0=\bl(\chi_{1,(\G^F/Z)_0})=\bl(\overline{\chi}_0)$. Therefore the conditions of \cite[Lemma 3.2]{Cab-Spa15} are satisfied and we obtain an extension $\chi_2\in\irr(A_{\rm glo})$ of $\chi_{1,H_{\rm glo}}$ satisfying
\begin{equation}
\label{eq:Criterion 7}
\bl\left(\varphi_{1,J}\right)=\bl\left(\chi_{2,J}\right)
\end{equation}
for every $(\G^F/Z)_0\leq J\leq A_{\rm glo}[B_0]$. From \eqref{eq:Criterion 6} and \eqref{eq:Criterion 7} we obtain
\[\bl\left(\psi_{1,J\cap A_{\rm loc}}\right)^J=\bl\left(\chi_{2,J}\right)\]
for every $(\G^F/Z)_0\leq J\leq A_{\rm glo}[B_0]$. The latter equation, together with \cite[Theorem 3.5]{Mur13}, yields
\begin{align}
\bl\left(\psi_{1,J\cap A_{\rm loc}}\right)^J&=\left(\bl\left(\psi_{1,J\cap A_{\rm loc}\cap A_{\rm glo}[B_0]}\right)^{J\cap A_{\rm loc}}\right)^J\nonumber
\\
&=\left(\bl\left(\chi_{2,J\cap A_{\rm glo}[B_0]}\right)\right)^J\label{eq:Criterion 8}
\\
&=\bl\left(\chi_{2,J}\right)\nonumber
\end{align}
for every $(\G^F/Z)_0\leq J\leq A_{\rm glo}$. Finally, observe that using Assumption \ref{ass:Assumption for the criterion} (i.a) and \cite[Theorem 4.1 (d)]{Spa17} we obtain
\begin{align*}
\c_{A_{\rm glo}}((\G^F/Z)_0)&=\c_{A_{\rm glo}}((\G^F/Z)_0\times S)
\\
&\leq \epsilon^{-1}\left(\c_{(\wt{\G}^F\mathcal{A})_\chi/Z}\left(\G^F/Z\right)\right)
\\
&=\epsilon^{-1}\left(\z(\wt{\G}^F)/Z\right)
\\
&=\left(\z\left(\wt{\G}^F\right)/Z\right)_0\times S.
\end{align*}
Recalling that $\irr(\chi_{\z(\G^F)})=\irr(\psi_{\z(\G^F)})$, we obtain $\irr(\wh{\chi}_{\z(\wt{\G}^F)})=\irr(\wh{\psi}_{\z(\wt{\G}^F)})$ and hence
\begin{align}
\irr\left(\chi_{2,(\z(\wt{\G}^F)/Z)_0\times S}\right)&=\irr\left(\chi_{1,(\z(\wt{\G}^F)/Z)_0\times S}\right)\nonumber
\\
&=\irr\left(\left(\overline{\wh{\chi}}\right)_{0,(\z(\wt{\G}^F)/Z)_0}\times \iota\right)\nonumber
\\
&=\irr\left(\left(\overline{\wh{\psi}}\right)_{0,(\z(\wt{\G}^F)/Z)_0}\times \iota\right)\label{eq:Criterion 9}
\\
&=\irr\left(\psi_{1,(\z(\wt{\G}^F)/Z)_0\times S}\right).\nonumber
\end{align}
Thanks to \eqref{eq:Criterion 8} and \eqref{eq:Criterion 9}, we can apply \cite[Lemma 3.10]{Spa17} which implies
\[\left(A_{\rm glo},\left(\G^F/Z\right)_0,\overline{\chi}_0\right)\iso{(\G^F/Z)_0}\left(A_{\rm loc},\left(\n_{\G}(\L)^F/Z\right)_0,\overline{\psi}_0\right).\]
Then \eqref{eq:Criterion 2} follows by using \cite[Theorem 4.1 (i)]{Spa17}. This completes the proof.
\end{proof}

\section{Stabilizers, extendibility and consequences}
\label{sec:Stabilizers, extendibility and consequences}

In this section, we combine the results obtained in Section \ref{sec:Constructing bijections} and Section \ref{sec:Criteria} and show how to obtain the parametrization of $e$-Harish-Chandra series introduced in \eqref{eq:e-HC series, parametrization with normalizer} and its compatibility with automorphisms and Clifford theory, considered in Condition \ref{cond:Main iEBC} and Condition \ref{cond:cEBC}, by assuming certain requirements on stabilizers and extendibility of characters. This proves Theorem \ref{cor:Main bijections for nonconnected centre} and Theorem \ref{thm:Main iEBC follows from extensions} and, in particular provides a strategy to extend the parametrization given in \cite[Theorem 3.2]{Bro-Mal-Mic93} to non-unipotent $e$-cuspidal pairs in groups with disconnected centre. At the end of this section, we apply the main results of \cite{Bro-Spa20} and \cite{Bro22} and obtain some consequences for groups of type $\mathbf{A}$ and $\mathbf{C}$. 

\subsection{Proof of Theorem \ref{cor:Main bijections for nonconnected centre} and Theorem \ref{thm:Main iEBC follows from extensions}}

Let $\G$, $F:\G\to\G$, $q$, $\ell$ and $e$ as in Notation \ref{notation} with $\G$ simple of simply connected type. Let $i:\G\to\wt{\G}$ be a regular embedding compatible with the action of $F$ and consider the group $\mathcal{A}$ generated by field and graph automorphisms of $\G$ in such a way that $\mathcal{A}$ acts on $\wt{\G}^F$ (see Section \ref{sec:Automorphisms}). Let $\mathcal{K}$ the be group introduced in Section \ref{sec:Regular embeddings} and define the semidirect product $(\wt{\G}^F\mathcal{A})\ltimes \mathcal{K}$ as in the discussion following Hypothesis \ref{hyp:e-Harish-Chandra theory}.

Thanks to Remark \ref{rmk:Brauer-Lusztig for simple simply connected}, Theorem \ref{cor:Main bijections for nonconnected centre} follows from our next theorem.

\begin{theo}
\label{cor:Bijections for nonconnected centre}
Assume Hypothesis \ref{hyp:Brauer--Lusztig blocks} and Hypothesis \ref{hyp:e-Harish-Chandra theory}. Consider an $F$-stable Levi subgroup $\K$ of $\G$ and an $e$-cuspidal pair $(\L,\lambda)$ of $\K$. Suppose there exists a $((\wt{\G}^F\mathcal{A})_{\K,\L}\ltimes \mathcal{K})$-equivariant extension map for $\cusp{\wt{\L}^F}$ with respect to $\wt{\L}^F\unlhd \n_{\wt{\K}}(\L)^F$. Then, there exists a defect preserving $(\wt{\G}^F\mathcal{A})_{\K,(\L,\lambda)}$-equivariant bijection
\[\Omega_{(\L,\lambda)}^\K:\E\left(\K^F,(\L,\lambda)\right)\to\irr\left(\n_{\K}(\L)^F\enspace\middle|\enspace\lambda\right).\]
\end{theo}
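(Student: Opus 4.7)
The plan is to descend the bijection furnished by Theorem \ref{thm:Bijection for connected center} along the regular embedding $i:\G\to\wt{\G}$, following the transversal construction used at the beginning of the proof of Theorem \ref{thm:Criterion, central}. First, I would invoke Theorem \ref{thm:Bijection for connected center} (whose hypotheses are precisely those we are assuming, together with Hypothesis \ref{hyp:Brauer--Lusztig blocks} and Hypothesis \ref{hyp:e-Harish-Chandra theory}) to obtain a defect-preserving $((\wt{\G}^F\mathcal{A})_{\K,(\L,\lambda)}\ltimes\mathcal{K})$-equivariant bijection
\[
\wt{\Omega}^{\K}_{(\L,\lambda)}:\irr\!\left(\wt{\K}^F\enspace\middle|\enspace\E(\K^F,(\L,\lambda))\right)\to\irr\!\left(\n_{\wt{\K}}(\L)^F\enspace\middle|\enspace\lambda\right),
\]
which is moreover compatible with restriction to central characters and with block induction.

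Setting $A:=(\wt{\G}^F\mathcal{A})_{\K,(\L,\lambda)}$, I would then fix an $(A\ltimes\mathcal{K})$-transversal $\wt{\mathbb{T}}_{\rm glo}$ in the global set, so that by equivariance $\wt{\mathbb{T}}_{\rm loc}:=\wt{\Omega}^{\K}_{(\L,\lambda)}(\wt{\mathbb{T}}_{\rm glo})$ is an $(A\ltimes\mathcal{K})$-transversal in the local set. For each $\wt{\chi}\in\wt{\mathbb{T}}_{\rm glo}$ I would select some $\chi\in\E(\K^F,(\L,\lambda))\cap\irr(\wt{\chi}_{\K^F})$, producing a set $\mathbb{T}_{\rm glo}$; symmetrically, for each $\wt{\psi}\in\wt{\mathbb{T}}_{\rm loc}$ I would select some $\psi\in\irr(\n_\K(\L)^F\mid\lambda)\cap\irr(\wt{\psi}_{\n_\K(\L)^F})$, producing a set $\mathbb{T}_{\rm loc}$. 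The argument at the start of the proof of Theorem \ref{thm:Criterion, central}, which uses only the $(A\ltimes\mathcal{K})$-equivariance of $\wt{\Omega}^{\K}_{(\L,\lambda)}$, Clifford's theorem applied to the abelian quotients $\wt{\K}^F/\K^F$ and $\n_{\wt{\K}}(\L)^F/\n_\K(\L)^F$, and \cite[Proposition 4.10]{Ros-Generalized_HC_theory_for_Dade}, then shows that $\mathbb{T}_{\rm glo}$ is an $A$-transversal in $\E(\K^F,(\L,\lambda))$, that $\mathbb{T}_{\rm loc}$ is an $A$-transversal in $\irr(\n_\K(\L)^F\mid\lambda)$, and that the assignments $\chi\mapsto\wt{\chi}$ and $\psi\mapsto\wt{\psi}$ are bijections between these transversals and $\wt{\mathbb{T}}_{\rm glo}$, $\wt{\mathbb{T}}_{\rm loc}$.

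With these transversals in place, I would define
\[
\Omega^{\K}_{(\L,\lambda)}(\chi^x):=\psi^x
\]
for $\chi\in\mathbb{T}_{\rm glo}$, $x\in A$, and $\psi\in\mathbb{T}_{\rm loc}$ corresponding via the diagram above to $\wt{\Omega}^{\K}_{(\L,\lambda)}(\wt{\chi})$; this yields a well-defined $A$-equivariant bijection. For the defect condition, I would observe that $\wt{\K}^F/\K^F\simeq\wt{\G}^F/\G^F$ has order prime to $\ell$, since $\ell\in\Gamma(\G,F)$ with $\ell\geq 5$ and $\G$ is simply connected, and likewise for $\n_{\wt{\K}}(\L)^F/\n_\K(\L)^F$. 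Consequently both the Clifford inertia index and the ramification index relating $\chi$ to $\wt{\chi}$ are $\ell'$-numbers, giving $d(\chi)=d(\wt{\chi})$ and $d(\psi)=d(\wt{\psi})$, so that $d(\chi)=d(\psi)$ follows from the defect-preservation of $\wt{\Omega}^{\K}_{(\L,\lambda)}$.

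The main obstacle lies in the second step, where one must verify that the seemingly arbitrary choices of $\chi$ below $\wt{\chi}$ (and $\psi$ below $\wt{\psi}$) genuinely yield $A$-transversals. The key book-keeping ingredient here is Proposition \ref{prop:Covering Brauer--Lusztig blocks and cuspidality with actions, partitions}, which identifies the $\mathcal{K}$-orbits on $\irr(\wt{\K}^F\mid\E(\K^F,(\L,\lambda)))$ with the $\wt{\K}^F$-orbits on $\E(\K^F,(\L,\lambda))$, and symmetrically on the local side. Once this identification is fed through the transversal argument of Theorem \ref{thm:Criterion, central}, the remaining verifications are essentially routine and the statement follows.
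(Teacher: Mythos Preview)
Your construction of the bijection is exactly the paper's: invoke Theorem~\ref{thm:Bijection for connected center}, fix an $(A\ltimes\mathcal{K})$-transversal upstairs, push it across via $\wt{\Omega}^{\K}_{(\L,\lambda)}$, choose constituents below to obtain $A$-transversals in $\E(\K^F,(\L,\lambda))$ and $\irr(\n_\K(\L)^F\mid\lambda)$, and extend $A$-equivariantly. The paper's own proof records precisely these steps and nothing more.

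Where your write-up departs from the paper is in the defect argument, and there it contains an error. Your claim that $\wt{\K}^F/\K^F\simeq\wt{\G}^F/\G^F$ has order prime to $\ell$ is false in general: for $\G=\SL_n\hookrightarrow\wt{\G}=\GL_n$ one has $|\wt{\G}^F/\G^F|=q-1$, and $\ell\mid q-1$ whenever $e=1$, which is perfectly compatible with $\ell\in\Gamma(\G,F)$ and $\ell\ge 5$. So neither the inertia index $[\wt{\K}^F:\wt{\K}^F_\chi]$ nor the ramification index relating $\chi$ to $\wt{\chi}$ is forced to be an $\ell'$-number by the standing hypotheses. The paper's proof of the present theorem does not spell out the defect step; the template is the defect computation in the proof of Theorem~\ref{thm:Criterion, central}, where one uses that the Clifford correspondent of $\wt{\chi}$ over $\chi$ is an \emph{extension} (so the ramification is $1$) and then matches the $\ell$-parts of the inertia indices $|\wt{\K}^F_\chi:\K^F|$ and $|\n_{\wt{\K}}(\L)^F_\psi:\n_\K(\L)^F|$ via the stabilizer comparison coming from the $(A\ltimes\mathcal{K})$-equivariance of $\wt{\Omega}^{\K}_{(\L,\lambda)}$. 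Replace your $\ell'$-quotient shortcut with that argument.
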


\begin{proof}
Fix a $((\wt{\G}^F\mathcal{A})_{\K,(\L,\lambda)}\ltimes \mathcal{K})$-transversal $\wt{\mathbb{T}}_{\rm glo}$ in $\irr(\wt{\K}^F\mid \E(\K^F,(\L,\lambda)))$. By Theorem \ref{thm:Bijection for connected center} the set $\wt{\mathbb{T}}_{\rm loc}:=\{\wt{\Omega}_{(\L,\lambda)}^\K(\wt{\chi})\mid \wt{\chi}\in\wt{\mathbb{T}}_{\rm glo}\}$ is a $((\wt{\G}^F\mathcal{A})_{\K,(\L,\lambda)}\ltimes \mathcal{K})$-transversal in $\irr(\n_{\wt{\K}}(\L)^F\mid \lambda)$. For every $\wt{\chi}\in\wt{\mathbb{T}}_{\rm glo}$ fix an irreducible constituent $\chi\in\E(\G^F,(\L,\lambda))$ of $\wt{\chi}_{\G^F}$ and define the set $\mathbb{T}_{\rm glo}$ consisting of such characters $\chi$, while $\wt{\chi}$ runs over the elements of $\wt{\mathbb{T}}_{\rm glo}$. Similarly, for every $\wt{\psi}\in\wt{\mathbb{T}}_{\rm loc}$, fix an irreducible constituent $\psi\in\irr(\n_\K(\L)^F\mid \lambda)$ of $\wt{\psi}_{\n_\K(\L)^F}$ and define the set $\mathbb{T}_{\rm loc}$ consisting of such characters $\psi$, while $\wt{\psi}$ runs over the elements of $\wt{\mathbb{T}}_{\rm loc}$. Then $\mathbb{T}_{\rm glo}$ and $\mathbb{T}_{\rm loc}$ are $(\wt{\G}^F\mathcal{A})_{\K,(\L,\lambda)}$-transversals in $\E(\G^F,(\L,\lambda))$ and $\irr(\n_\K(\L)^F\mid \lambda)$ respectively. Fix $\chi\in\mathbb{T}_{\rm glo}$ and let $\wt{\chi}$ be the unique element of $\wt{\mathbb{T}}_{\rm glo}$ lying above $\chi$. Let $\wt{\psi}:=\wt{\Omega}_{(\L,\lambda)}^\K(\wt{\chi})\in\wt{\mathbb{T}}_{\rm loc}$ and consider the unique element $\psi$ of $\mathbb{T}_{\rm loc}$ lying below $\wt{\psi}$. This defines a bijection
\begin{equation}
\label{eq:bijection for transversals}
\mathbb{T}_{\rm glo}\to\mathbb{T}_{\rm loc}.
\end{equation}
Then, defining
\[\Omega_{(\L,\lambda)}^\K(\chi^x):=\psi^x\]
for every $x\in (\wt{\G}^F\mathcal{A})_{\K,(\L,\lambda)}$ and every $\chi\in\mathbb{T}_{\rm glo}$ corresponding to $\psi\in\mathbb{T}_{\rm loc}$ via \eqref{eq:bijection for transversals} we obtain the wanted bijection.
\end{proof}

The above result provides a way to extend \cite[Theorem 3.2 (ii)]{Bro-Mal-Mic93} and obtain a parametrization of $e$-Harish-Chandra series for non-unipotent $e$-cuspidal pairs of simple algebraic groups with (possibly) disconnected centre.

We now come to the proof of Theorem \ref{thm:Main iEBC follows from extensions}. As said before, this reduces the verification of Condition \ref{cond:Main iEBC} to questions on stabilizers and extendibility of characters. Before proceeding further, we give an exact definition of these conditions. The following should be compared to \cite[Definition 2.2]{Cab-Spa19}.

\begin{defin}
\label{def:Star conditions}
For every $e$-split Levi subgroup $\L$ of $\G$, we define the following condition.

There exists a $\wt{\L}^F$-transversal $\mathcal{T}$ in $\cusp{\L^F}$ such that:
\begin{enumerate} 
\item[(G)] For every $\lambda\in\mathcal{T}$ and every $\chi\in\E(\G^F,(\L,\lambda))$ there exists an $\n_{\wt{\G}}(\L)_{\lambda}^F$-conjugate $\chi_0$ of $\chi$ such that:
\begin{enumerate}
\item $\left(\wt{\G}^F\mathcal{A}\right)_{\chi_0}=\wt{\G}_{\chi_0}^F \mathcal{A}_{\chi_0}$, and
\item $\chi_0$ extends to $\G^F\mathcal{A}_{\chi_0}$.
\end{enumerate}
\item[(L)] For every $\lambda\in\mathcal{T}$ and every $\psi\in\irr(\n_\G(\L)^F\mid \lambda)$ there exists an $\n_{\wt{\G}}(\L)_{\lambda}^F$-conjugate $\psi_0$ of $\psi$ such that:
\begin{enumerate}
\item $\left(\wt{\G}^F\mathcal{A}\right)_{\L,\psi_0}=\n_{\wt{\G}}(\L)_{\psi_0}^F \left(\G^F\mathcal{A}\right)_{\L,\psi_0}$, and
\item $\psi_0$ extends to $\left(\G^F\mathcal{A}\right)_{\L,\psi_0}$.
\end{enumerate}
\end{enumerate}
\end{defin} 

We make a remark on the global condition of Definition \ref{def:Star conditions}. In fact, this condition is slightly stronger than condition $\rm{A}(\infty)$ of \cite[Definition 2.2]{Cab-Spa19}. However, these two conditions are equivalent under additional assumptions.

\begin{rmk}
\label{rmk:Global star}
Assume that Hypothesis \ref{hyp:Brauer--Lusztig blocks} holds for $(\G,F)$ and let $(\L,\lambda)$ be an $e$-cuspidal pair of $\G$. Set $B:=\bl(\lambda)^{\G^F}$ and suppose that either:
\begin{enumerate}
\item $\out(\G^F)_{\mathcal{B}}$ is abelian, where $\mathcal{B}$ denotes the $\wt{\G}^F$-orbit of $B$; or
\item $B$ is $\wt{\G}^F$-invariant and $(\L,\lambda)$ is $e$-Brauer--Lusztig-cuspidal.
\end{enumerate}
Then Definition \ref{def:Star conditions} (G) is equivalent to the following:
\begin{enumerate}
\item[(G')] For every $\lambda\in\mathcal{T}$ and every $\chi\in\E(\G^F,(\L,\lambda))$ there exists a $\wt{\G}^F$-conjugate $\chi_0$ of $\chi$ such that:
\begin{enumerate}
\item $\left(\wt{\G}^F\mathcal{A}\right)_{\chi_0}=\wt{\G}_{\chi_0}^F \mathcal{A}_{\chi_0}$, and
\item $\chi_0$ extends to $\G^F\mathcal{A}_{\chi_0}$.
\end{enumerate}
\end{enumerate}
\end{rmk}

\begin{proof}
Clearly Definition \ref{def:Star conditions} (G) implies (G') above. Conversely let $\chi\in\E(\G^F,(\L,\lambda))$ and consider a $\wt{\G}^F$-conjugate $\chi_1$ of $\chi$ satisfying the required properties. As explained in Remark \ref{rmk:Condition on block stability}, if $\out(\G^F)_\mathcal{B}$ is abelian, then $\chi$ also satisfies the required properties (see \cite[Lemma 4.7]{Bro-Spa21}) and we set $\chi_0:=\chi$. On the other hand by using the argument of Remark \ref{rmk:Condition on block stability}, if $B$ is $\wt{\G}^F$-invariant and $(\L,\lambda)$ is $e$-Brauer--Lusztig-cuspidal, then $\chi_1\in\E(\G^F,(\L,\lambda))$ and we set $\chi_0:=\chi_1$. This shows that there exists $\chi_0\in\E(\G^F,(\L,\lambda))$ and $x\in\wt{\G}^F$ such that $\chi_0=\chi^x$ satisfies the required properties. In particular $\chi_0\in\E(\G^F,(\L,\lambda))\cap \E(\G^F,(\L,\lambda)^x)$ and \cite[Proposition 4.10]{Ros-Generalized_HC_theory_for_Dade} implies that $(\L,\lambda)=(\L,\lambda)^{xy}$ for some $y\in \G^F$. It follows that $\chi_0=\chi^{xy}$ with $xy\in\n_{\wt{\G}}(\L)^F_\lambda$ as required by Definition \ref{def:Star conditions} (G).
\end{proof}

Moreover, we make another remark on the local condition in Definition \ref{def:Star conditions}. This should be compared with condition ${{\rm A}}(d)$ of \cite[Definition 2.2]{Cab-Spa19} with $d=e$.

\begin{rmk}
Let $\L$ be an $e$-split Levi subgroup of $\G$. Then condition ${{\rm A}}(d)$ of \cite[Definition 2.2]{Cab-Spa19} states that for every $\psi\in\irr(\n_\G(\L)^F)$ there exists an $\n_{\wt{\G}}(\L)^F$-conjugate $\psi_0$ such that:
\begin{enumerate}
\item $\left(\wt{\G}^F\mathcal{A}\right)_{\L,\psi_0}=\n_{\wt{\G}}(\L)_{\psi_0}^F \left(\G^F\mathcal{A}\right)_{\L,\psi_0}$, and
\item $\psi_0$ extends to $\left(\G^F\mathcal{A}\right)_{\L,\psi_0}$.
\end{enumerate}
Condition (L) of Definition \ref{def:Star conditions} gives a more precise description by saying that whenever $\lambda\in\irr(\L)^F$ is $e$-cuspidal and $\psi$ lies above $\lambda$, then $\psi_0$ can be chosen to lie above $\lambda$.

We point out that a detailed inspection of the argument used in all instances where condition ${{\rm A}}(d)$ of \cite[Definition 2.2]{Cab-Spa19} has been verified would actually provide the more precise condition (L) of Definition \ref{def:Star conditions}. This is explained in more details in Lemma \ref{rmk:Local star} below.
\end{rmk}

Before proving Theorem \ref{thm:Main iEBC follows from extensions}, we prove a similar result for the simpler Condition \ref{cond:cEBC}.

\begin{theo}
\label{thm:cEBC follows from extensions}
Assume Hypothesis \ref{hyp:e-Harish-Chandra theory} and Hypothesis \ref{hyp:Reduction}. Let $\L$ be an $e$-split Levi subgroup of $\hspace{1pt}\G$ and suppose that the following conditions hold:
\begin{enumerate}
\item maximal extendibility holds with respect to $\G^F\unlhd \wt{\G}^F$ and to $\n_{\G}(\L)^F\unlhd \n_{\wt{\G}}(\L)^F$;
\item there exists a $((\wt{\G}^F\mathcal{A})_{\L}\ltimes \mathcal{K})$-equivariant extension map for $\cusp{\wt{\L}^F}$ with respect to $\wt{\L}^F\unlhd \n_{\wt{\G}}(\L)^F$;
\item the requirement from Definition \ref{def:Star conditions} holds for $\L\leq \G$.
\end{enumerate}
Then Condition \ref{cond:cEBC} holds for every $e$-cuspidal pair $(\L,\lambda)$ of $\hspace{1pt}\G$.
\end{theo}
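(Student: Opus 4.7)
My approach is to verify Assumption \ref{ass:Assumption for the criterion, central} for the pair $(\L,\lambda)$ and then invoke Theorem \ref{thm:Criterion, central}. Since Condition \ref{cond:cEBC} is invariant under $\wt{\L}^F$-conjugation of $(\L,\lambda)$, I may replace $(\L,\lambda)$ by an $\wt{\L}^F$-conjugate and assume that $\lambda$ belongs to the $\wt{\L}^F$-transversal $\mathcal{T}\subseteq \cusp{\L^F}$ furnished by hypothesis (iii) via Definition \ref{def:Star conditions}.

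Condition (i.a) of Assumption \ref{ass:Assumption for the criterion, central} is a standard property of the semidirect product $\wt{\G}^F\rtimes \mathcal{A}$ constructed in Section \ref{sec:Automorphisms}, available since Hypothesis \ref{hyp:Reduction} forces $\G$ to be simple of simply connected type; conditions (i.b) and (i.c) are precisely the two instances of maximal extendibility in hypothesis (i). For (ii) I would apply Theorem \ref{thm:Bijection for connected center} with $\K=\G$: hypothesis (ii) on the $((\wt{\G}^F\mathcal{A})_{\L}\ltimes \mathcal{K})$-equivariant extension map for $\cusp{\wt{\L}^F}$ matches exactly the input of that theorem (noting that Hypothesis \ref{hyp:Brauer--Lusztig blocks} is built into Hypothesis \ref{hyp:Reduction}), and its output is a defect-preserving $((\wt{\G}^F\mathcal{A})_{(\L,\lambda)}\ltimes \mathcal{K})$-equivariant bijection $\wt{\Omega}^{\G}_{(\L,\lambda)}\colon \wt{\mathcal{G}}\to\wt{\mathcal{N}}$ that also respects restriction to $\z(\wt{\G}^F)$, as demanded by (ii).

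For the stabilizer conditions (iii) and (iv), I proceed as follows. Given $\wt{\chi}\in\wt{\mathcal{G}}$, pick any $\chi\in\mathcal{G}\cap\irr(\wt{\chi}_{\G^F})$; the global half of Definition \ref{def:Star conditions} (applicable because $\lambda\in\mathcal{T}$) then furnishes an $\n_{\wt{\G}}(\L)^F_\lambda$-conjugate $\chi_0$ of $\chi$ satisfying (G.a) and (G.b). Since the conjugating element lies in $\wt{\G}^F$ (hence fixes $\wt{\chi}$) and stabilizes $(\L,\lambda)$, we retain $\chi_0\in\mathcal{G}\cap\irr(\wt{\chi}_{\G^F})$, and conditions (G.a)--(G.b) translate verbatim into (iii.a)--(iii.b). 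A parallel argument using the local half of Definition \ref{def:Star conditions} discharges (iv). Theorem \ref{thm:Criterion, central} then delivers Condition \ref{cond:cEBC} for $(\L,\lambda)$. The substantive content of the theorem is not in this assembly but in the three deep inputs (i)--(iii); the only minor subtleties are the preliminary $\wt{\L}^F$-reduction ensuring $\lambda\in\mathcal{T}$, and the verification that the $\n_{\wt{\G}}(\L)^F_\lambda$-conjugacy preserves membership in $\mathcal{G}\cap\irr(\wt{\chi}_{\G^F})$ --- both routine.
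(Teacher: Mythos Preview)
Your proposal is correct and follows essentially the same route as the paper: reduce to $\lambda\in\mathcal{T}$ via conjugation-invariance of Condition~\ref{cond:cEBC}, verify each item of Assumption~\ref{ass:Assumption for the criterion, central} using hypotheses (i)--(iii) and Theorem~\ref{thm:Bijection for connected center}, and conclude by Theorem~\ref{thm:Criterion, central}. Your extra justification that the $\n_{\wt{\G}}(\L)^F_\lambda$-conjugate $\chi_0$ remains in $\mathcal{G}\cap\irr(\wt{\chi}_{\G^F})$ is correct and makes explicit a point the paper leaves implicit.
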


\begin{proof}
Fix an $e$-cuspidal pair $(\L,\lambda)$ of $\G$. We want to find a bijection $\Omega_{(\L,\lambda)}^\G$ as in Condition \ref{cond:cEBC}. Let $\mathcal{T}$ be the $\wt{\L}^F$-transversal in $\cusp{\L^F}$ given by Definition \ref{def:Star conditions}. Since $\G^F$-central isomorphisms of character triples are compatible with conjugation, it is no loss of generality to assume $\lambda\in\mathcal{T}$. Now Assumption \ref{ass:Assumption for the criterion, central} (iii) and (iv) hold by Definition \ref{def:Star conditions} (G) and (L) respectively, while under Hypothesis \ref{hyp:e-Harish-Chandra theory} the bijection from Assumption \ref{ass:Assumption for the criterion, central} (ii) exists by Theorem \ref{thm:Bijection for connected center}. Since we are assuming Hypothesis \ref{hyp:Reduction}, we can apply Theorem \ref{thm:Criterion, central} to conclude that Condition \ref{cond:cEBC} holds for $(\L,\lambda)$ and $\G$.
\end{proof}

The same proof can be used to obtain Theorem \ref{thm:Main iEBC follows from extensions}. Here, we prove a slightly more general result which allows us to consider a larger class of blocks. The additional block theoretic requirements are inspired by \cite[Theorem 4.1 (v)]{Cab-Spa15} and \cite[Theorem 2.4 (v)]{Bro-Spa20} and hold automatically for unipotent blocks, blocks with maximal defect and in general for every group not of type $\mathbf{A}$, $\mathbf{D}$ or $\mathbf{E}_6$ (see Remark \ref{rmk:Condition on block stability}).

\begin{theo}
\label{thm:iEBC follows from extensions}
Assume Hypothesis \ref{hyp:e-Harish-Chandra theory} and Hypothesis \ref{hyp:Reduction}. Let $\L$ be an $e$-split Levi subgroup of $\hspace{1pt}\G$, $B\in\Bl(\G^F)$ and suppose that the following conditions hold:
\begin{enumerate}
\item maximal extendibility holds with respect to $\G^F\unlhd \wt{\G}^F$ and to $\n_\G(\L)^F\unlhd \n_{\wt{\G}}(\L)^F$;
\item there exists a $((\wt{\G}^F\mathcal{A})_\L\ltimes\mathcal{K})$-equivariant extension map for $\cusp{\wt{\L}^F}$ with respect to $\wt{\L}^F\unlhd \n_{\wt{\G}}(\L)^F$;
\item the requirement from Definition \ref{def:Star conditions} holds for $\L\leq \G$;
\item the block $B$ satisfies either
\begin{enumerate}[label=(\alph*)]
\item ${\rm Out}(\G^F)_\mathcal{B}$ is abelian, where $\mathcal{B}$ is the $\wt{\G}^F$-orbit of $B$, or
\item for every subgroup $\G^F\leq J\leq\wt{\G}^F$, we have that every block $C$ of $J$ covering $B$ is $\wt{\G}^F$-invariant.\end{enumerate}
\end{enumerate}
Then Condition \ref{cond:Main iEBC} holds for every $e$-Brauer--Lusztig-cuspidal pair $(\L,\lambda)$ of $\G$ such that $\bl(\lambda)^{\G^F}=B$.
\end{theo}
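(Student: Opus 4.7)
The plan is to mirror the argument of Theorem \ref{thm:cEBC follows from extensions} but to invoke the stronger criterion Theorem \ref{thm:Criterion} in place of Theorem \ref{thm:Criterion, central}. Fix an $e$-Brauer--Lusztig-cuspidal pair $(\L,\lambda)$ of $\G$ with $\bl(\lambda)^{\G^F}=B$ and let $\mathcal{T}$ be the $\wt{\L}^F$-transversal in $\cusp{\L^F}$ supplied by Definition \ref{def:Star conditions}. Since Condition \ref{cond:Main iEBC} is compatible with conjugation by elements of $\aut_\mathbb{F}(\G^F)$, and in particular with the diagonal automorphisms induced by $\wt{\L}^F$, and since hypothesis (iv) depends only on the $\wt{\G}^F$-orbit of $B$, I may replace $\lambda$ by a suitable $\wt{\L}^F$-conjugate to arrange that $\lambda\in\mathcal{T}$ (replacing $B$ by its $\wt{\L}^F$-conjugate if needed).

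The next step is to verify Assumption \ref{ass:Assumption for the criterion} for this $(\L,\lambda)$. The group theoretic conditions (i.a), (i.b) and (i.c) follow from the setup under Hypothesis \ref{hyp:Reduction} (using the identification of $\wt{\G}^F\mathcal{A}/\z(\wt{\G}^F)$ with $\aut(\G^F)$ from Section \ref{sec:Automorphisms} together with \cite[Lemma 2.1]{Ros-Generalized_HC_theory_for_Dade}) combined with hypothesis (i) of the theorem. The bijection required in (ii) is furnished by Theorem \ref{thm:Bijection for connected center}, whose hypotheses are precisely Hypothesis \ref{hyp:Brauer--Lusztig blocks} (contained in Hypothesis \ref{hyp:Reduction}), Hypothesis \ref{hyp:e-Harish-Chandra theory}, and the equivariant extension map of hypothesis (ii). Conditions (iii) and (iv) follow verbatim from parts (G) and (L) of Definition \ref{def:Star conditions}, which are granted by hypothesis (iii) since $\lambda\in\mathcal{T}$. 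Condition (v) is exactly hypothesis (iv) of the theorem, and condition (vi) is the assumed $e$-Brauer--Lusztig-cuspidality of $(\L,\lambda)$.

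With Assumption \ref{ass:Assumption for the criterion} in place, an application of Theorem \ref{thm:Criterion} immediately yields Condition \ref{cond:Main iEBC} for $(\L,\lambda)$ and $\G$. The only point where the argument is not purely mechanical relative to the proof of Theorem \ref{thm:cEBC follows from extensions} is the initial reduction to $\lambda\in\mathcal{T}$: one must check that moving $\lambda$ within its $\wt{\L}^F$-orbit preserves the block-theoretic hypothesis (iv). Under (iv.a) this is immediate because $\out(\G^F)_{\mathcal{B}}$ depends only on the $\wt{\G}^F$-orbit $\mathcal{B}$ of $B$, and under (iv.b) the invariance property is preserved by $\wt{\G}^F$-conjugation of $B$. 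I expect this verification, together with matching conditions (v) and (vi) of Assumption \ref{ass:Assumption for the criterion} to the corresponding hypotheses of the theorem, to be the only subtle step; all remaining ingredients are direct quotations of Theorem \ref{thm:Bijection for connected center}, Definition \ref{def:Star conditions}, and Theorem \ref{thm:Criterion}.
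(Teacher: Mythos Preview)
Your proposal is correct and follows essentially the same route as the paper's proof: reduce to $\lambda\in\mathcal{T}$ via compatibility of Condition \ref{cond:Main iEBC} with conjugation and stability of hypothesis (iv) under $\wt{\G}^F$-conjugation, then verify each item of Assumption \ref{ass:Assumption for the criterion} exactly as you indicate and apply Theorem \ref{thm:Criterion}. The only difference is cosmetic: you spell out the verification of Assumption \ref{ass:Assumption for the criterion} (i.a) explicitly, whereas the paper leaves it implicit.
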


\begin{proof}
Consider an $e$-Brauer--Lusztig-cuspidal pair $(\L,\lambda)$ of $\G$ as in the statement. Let $\mathcal{T}$ be the $\wt{\L}^F$-transversal in $\cusp{\L^F}$ given by Definition \ref{def:Star conditions}. Since $\G^F$-block isomorphisms of character triples are compatible with conjugation and our block theoretic hypothesis (iv) is preserved by $\wt{\G}^F$-conjugation, it is no loss of generality to assume $\lambda\in\mathcal{T}$. Now Assumption \ref{ass:Assumption for the criterion} (iii) and (iv) hold by Definition \ref{def:Star conditions} (G) and (L) respectively, while under Hypothesis \ref{hyp:e-Harish-Chandra theory} the bijection from Assumption \ref{ass:Assumption for the criterion} (ii) exists by Theorem \ref{thm:Bijection for connected center}. Finally notice that Assumption \ref{ass:Assumption for the criterion} (v) and (vi) hold by our hypothesis. Since we are assuming Hypothesis \ref{hyp:Reduction} we can apply Theorem \ref{thm:Criterion} to conclude that Condition \ref{cond:Main iEBC} holds for $(\L,\lambda)$ and $\G$.
\end{proof}

The extendibility conditions in Theorem \ref{thm:cEBC follows from extensions} (i)-(ii) and Theorem \ref{thm:iEBC follows from extensions} (i)-(ii) should be compared with condition ${\rm{B}}(d)$ of \cite[Definition 2.2]{Cab-Spa19} with $d=e$.

The reader should compare Theorem \ref{cor:Bijections for nonconnected centre} with Theorem \ref{thm:cEBC follows from extensions} and Theorem \ref{thm:iEBC follows from extensions}. We want to stress, when proving Condition \ref{cond:Main iEBC} and Condition \ref{cond:cEBC}, that the hardest task is to show that the associated character triples are $\G^F$-central isomorphic and $\G^F$-block isomorphic respectively.

\subsection{Results for groups of type $\mathbf{A}$ and $\mathbf{C}$}
\label{sec:Results for groups of type A and C}

Finally, by applying the main results of \cite{Bro-Spa20} and \cite{Bro22}, we obtain consequences of Theorem \ref{cor:Bijections for nonconnected centre}, Theorem \ref{thm:cEBC follows from extensions} and Theorem \ref{thm:iEBC follows from extensions} for groups of type $\mathbf{A}$ and $\mathbf{C}$. We start by considering Theorem \ref{cor:Bijections for nonconnected centre} which allows us to prove the following corollary.

\begin{cor}
\label{cor:Bijections for nonconnected centre, type A and C}
Let $\G$, $F:\G\to \G$, $q$, $\ell$ and $e$ be as in Notation \ref{notation} and suppose that $\G$ is simple, simply connected of type $\mathbf{A}$ or $\mathbf{C}$ and that $\ell\in\Gamma(\G,F)$ with $\ell\geq 5$. For every $e$-cuspidal pair $(\L,\lambda)$ of $\hspace{1pt}\G$, there exists an $\aut_\mathbb{F}(\G^F)_{(\L,\lambda)}$-equivariant bijection
\[\Omega_{(\L,\lambda)}^\G:\E\left(\G^F,(\L,\lambda)\right)\to\irr\left(\n_\G(\L)^F\enspace\middle|\enspace \lambda\right)\]
that preserves the $\ell$-defect of characters.
\end{cor}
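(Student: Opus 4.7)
The plan is to derive Corollary \ref{cor:Bijections for nonconnected centre, type A and C} as a direct application of Theorem \ref{cor:Bijections for nonconnected centre} (taken with $\K=\G$), by verifying each of its three hypotheses in the type $\mathbf{A}$ and type $\mathbf{C}$ setting and then translating the resulting equivariance statement into the desired form.

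First I would dispose of the two background hypotheses. Since $\G$ is simple, simply connected of type $\mathbf{A}$ or $\mathbf{C}$, its group of fixed points is never one of the excluded groups $^2\mathbf{E}_6(2),\mathbf{E}_7(2),\mathbf{E}_8(2)$, so Remark \ref{rmk:Brauer-Lusztig for simple simply connected} applies under our assumption $\ell\in\Gamma(\G,F)$ with $\ell\geq 5$ and yields Hypothesis \ref{hyp:Brauer--Lusztig blocks}. Hypothesis \ref{hyp:e-Harish-Chandra theory} is immediate from the same premises, since $\G$ is simple and its type is neither $\mathbf{E}_6$, $\mathbf{E}_7$, nor $\mathbf{E}_8$.

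The substantive input is the existence, for every $e$-split Levi subgroup $\L$, of a $((\wt{\G}^F\mathcal{A})_{\L}\ltimes\mathcal{K})$-equivariant extension map for $\cusp{\wt{\L}^F}$ with respect to $\wt{\L}^F\unlhd\n_{\wt{\G}}(\L)^F$. In type $\mathbf{A}$ this is provided by the main results of Brough--Sp\"ath \cite{Bro-Spa20}, and in type $\mathbf{C}$ by Brough \cite{Bro22}; the assumption $\ell\geq 5$ is exactly what is needed to fit their hypotheses. With $\K=\G$ the stabilizer $(\wt{\G}^F\mathcal{A})_{\K,\L}$ simply reduces to $(\wt{\G}^F\mathcal{A})_\L$, so this is precisely the equivariance required by Theorem \ref{cor:Bijections for nonconnected centre}.

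Applying Theorem \ref{cor:Bijections for nonconnected centre} with $\K=\G$ then produces, for each $e$-cuspidal pair $(\L,\lambda)$ of $\G$, a defect-preserving $(\wt{\G}^F\mathcal{A})_{(\L,\lambda)}$-equivariant bijection
\[
\Omega^\G_{(\L,\lambda)}:\E\left(\G^F,(\L,\lambda)\right)\to\irr\left(\n_\G(\L)^F\enspace\middle|\enspace\lambda\right).
\]
To pass from $(\wt{\G}^F\mathcal{A})_{(\L,\lambda)}$-equivariance to $\aut_\mathbb{F}(\G^F)_{(\L,\lambda)}$-equivariance I would invoke the description of $\aut_\mathbb{F}(\G^F)$ recalled in Section \ref{sec:Automorphisms}: under our hypotheses on $\G$, the diagonal and field--graph automorphisms generated by $\wt{\G}^F$ and $\mathcal{A}$ induce the whole group $\aut_\mathbb{F}(\G^F)$, so the equivariance established by the theorem immediately covers all of $\aut_\mathbb{F}(\G^F)_{(\L,\lambda)}$. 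The final preservation of $\ell$-defect is part of the conclusion of Theorem \ref{cor:Bijections for nonconnected centre} itself.

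The main obstacle is not conceptual but bibliographic: one must verify that the extension maps constructed in \cite{Bro-Spa20} and \cite{Bro22} genuinely satisfy the stronger equivariance under the full group $(\wt{\G}^F\mathcal{A})_\L\ltimes\mathcal{K}$ (including the twisting by characters in $\mathcal{K}$), as opposed to the weaker equivariance statement that is formally recorded there. As noted in Lemma \ref{rmk:Local star} and the surrounding discussion, this stronger property is in fact extracted from an inspection of the arguments in \emph{loc.\ cit.}, so the remaining work is to quote these results in the precise form needed.
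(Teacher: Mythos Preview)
Your proposal is correct and follows essentially the same approach as the paper: both apply Theorem \ref{cor:Bijections for nonconnected centre} with $\K=\G$, verify Hypotheses \ref{hyp:Brauer--Lusztig blocks} and \ref{hyp:e-Harish-Chandra theory} via Remark \ref{rmk:Brauer-Lusztig for simple simply connected}, and obtain the required equivariant extension map from \cite{Bro-Spa20} and \cite{Bro22}. One minor imprecision: your closing paragraph points to Lemma \ref{rmk:Local star} as the source for the $\mathcal{K}$-equivariance of the extension map, but that lemma concerns Definition \ref{def:Star conditions} (L); the paper instead cites the precise results directly (\cite[Corollary 4.7 (b)]{Bro-Spa20} for type $\mathbf{A}$, and \cite[Theorem 4.1 (b)]{Bro-Spa20} together with \cite[Corollary 4.13, Proposition 4.18, Lemma 5.11, Proposition 5.18]{Bro22} for type $\mathbf{C}$), so no additional inspection is needed.
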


\begin{proof}
We apply Theorem \ref{cor:Bijections for nonconnected centre} with $\K=\G$. Under our assumption, Hypothesis \ref{hyp:Brauer--Lusztig blocks} and Hypothesis \ref{hyp:e-Harish-Chandra theory} are satisfied by Remark \ref{rmk:Brauer-Lusztig for simple simply connected}. Suppose first that $\G$ is of type $\mathbf{A}$. Then \cite[Corollary 4.7 (b)]{Bro-Spa20} shows that there exists a $((\wt{\G}^F\mathcal{A})_\L\ltimes \mathcal{K})$-equivariant extension map with respect to $\wt{\L}^F\unlhd \n_{\wt{\G}}(\L)^F$ and therefore the result follows. On the other hand, if $\G$ is of type $\mathbf{C}$, then we obtain a $((\wt{\G}^F\mathcal{A})_\L\ltimes \mathcal{K})$-equivariant extension map for $\cusp{\wt{\L}^F}$ with respect to $\wt{\L}^F\unlhd \n_{\wt{\G}}(\L)^F$ by applying \cite[Theorem 4.1 (b)]{Bro-Spa20} whose conditions are satisfied thanks to \cite[Corollary 4.13, Proposition 4.18, Lemma 5.11, Proposition 5.18]{Bro22} (see the proof of \cite[Theorem 1.1]{Bro22} for a detailed explanation). This concludes the proof.
\end{proof}

Our next aim is to show that the bijections obtain in Corollary \ref{cor:Bijections for nonconnected centre, type A and C} can be chosen in such a way that the corresponding character triples are $\G^F$-central isomorphic and even $\G^F$-block isomorphic as predicted by Condition \ref{cond:cEBC} and Condition \ref{cond:Main iEBC} respectively. In order to apply Theorem \ref{thm:cEBC follows from extensions} and Theorem \ref{thm:iEBC follows from extensions}, we need to prove the requirements of Definition \ref{def:Star conditions}. In the next lemma we consider the local condition from Definition \ref{def:Star conditions} (L).

\begin{lem}
\label{rmk:Local star}
If $\G$ is simple, simply connected of type $\mathbf{A}$ or $\mathbf{C}$ and $\ell\in\Gamma(\G,F)$ with $\ell\geq 5$, then Definition \ref{def:Star conditions} (L) holds with respect to every $e$-split Levi subgroup $\L$.
\end{lem}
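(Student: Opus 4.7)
The plan is to derive condition (L) from condition ${\rm A}(d)$ of \cite[Definition 2.2]{Cab-Spa19} with $d = e$, which has been established for type $\mathbf{A}$ in \cite{Cab-Spa17I} and \cite{Bro-Spa20}, and for type $\mathbf{C}$ in \cite{Cab-Spa17II} and \cite{Bro22}. Condition ${\rm A}(d)$ provides, for every $\psi \in \irr(\n_\G(\L)^F)$, an $\n_{\wt{\G}}(\L)^F$-conjugate $\psi_1$ fulfilling the stabiliser and extendibility properties (a) and (b) of Definition \ref{def:Star conditions} (L). The refinement demanded in (L) is that, when $\psi$ lies above a fixed $\lambda \in \mathcal{T}$, this conjugate can be taken inside the smaller orbit under $\n_{\wt{\G}}(\L)^F_\lambda$.

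First, I would fix $\lambda \in \mathcal{T}$ and $\psi \in \irr(\n_\G(\L)^F \mid \lambda)$, pick an extension $\wt{\lambda} \in \irr(\wt{\L}^F_\lambda \mid \lambda)$, and invoke the $((\wt{\G}^F\mathcal{A})_\L \ltimes \mathcal{K})$-equivariant extension map for $\cusp{\wt{\L}^F}$ with respect to $\wt{\L}^F \unlhd \n_{\wt{\G}}(\L)^F$ available in types $\mathbf{A}$ and $\mathbf{C}$ (see the proof of Corollary \ref{cor:Bijections for nonconnected centre, type A and C}). This extension map sends $\wt{\lambda}$ to a character of $\n_{\wt{\G}}(\L)^F_{\wt{\lambda}}$, and combined with the Clifford correspondence and Gallagher's theorem it provides an explicit parametrisation of $\irr(\n_\G(\L)^F \mid \lambda)$ in terms of characters of the relative Weyl group $W_\G(\L,\lambda)^F$. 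Inspecting the proofs of ${\rm A}(d)$ in the cited references shows that the witnessing character $\psi_1$ is constructed precisely from such Clifford data above $\lambda$, and therefore lies in $\irr(\n_\G(\L)^F \mid \lambda)$ by construction.

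Once $\psi_1 \in \irr(\n_\G(\L)^F \mid \lambda)$ is available, both $\psi$ and $\psi_1$ lie above $\lambda$. Writing $\psi_1 = \psi^z$ with $z \in \n_{\wt{\G}}(\L)^F$, Clifford's theorem applied to $\L^F \unlhd \n_\G(\L)^F$ forces $\lambda^z = \lambda^u$ for some $u \in \n_\G(\L)^F$, hence $zu \in \n_{\wt{\G}}(\L)^F_\lambda$. Setting $\psi_0 := \psi^{zu} = \psi_1^u$ produces a character in the $\n_{\wt{\G}}(\L)^F_\lambda$-orbit of $\psi$, and conditions (a) and (b) are preserved under conjugation by $u \in \n_\G(\L)^F \subseteq \G^F$: since $\G^F$ is normal in $\wt{\G}^F \rtimes \mathcal{A}$ by Assumption \ref{ass:Assumption for the criterion} (i.a), conjugation by elements of $\G^F$ is an inner automorphism of $\G^F\mathcal{A}$, so $\psi_0$ inherits (a) and (b) from $\psi_1$.

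The main obstacle is the explicit bookkeeping needed to verify that the characters $\psi_1$ produced in the cited references lie in $\irr(\n_\G(\L)^F \mid \lambda)$ and not merely in the full $\n_{\wt{\G}}(\L)^F$-orbit of $\psi$; this ultimately rests on the $((\wt{\G}^F\mathcal{A})_\L \ltimes \mathcal{K})$-equivariance of the extension map for $\cusp{\wt{\L}^F}$, which ensures that the Clifford-theoretic construction is compatible with the $\wt{\L}^F$-transversal $\mathcal{T}$.
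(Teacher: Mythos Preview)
Your approach is essentially the same as the paper's: show that the witness $\psi_1$ from condition ${\rm A}(d)$ actually lies above $\lambda$, and then use a Clifford argument to upgrade $\n_{\wt{\G}}(\L)^F$-conjugacy to $\n_{\wt{\G}}(\L)^F_\lambda$-conjugacy. Two small points of imprecision are worth noting. First, the extension input you invoke is not quite the one the paper uses: the paper appeals to an extension $\wh{\lambda}\in\irr(\n_\G(\L)^F_\lambda)$ of $\lambda$ itself (from \cite[Theorem 1.2 (a)]{Bro-Spa20} for type $\mathbf{A}$ and \cite[Theorem 1.2]{Bro22} for type $\mathbf{C}$), not to the extension map for $\cusp{\wt{\L}^F}$ with respect to $\wt{\L}^F\unlhd\n_{\wt{\G}}(\L)^F$; it is the explicit form $\psi_0=(\wh{\lambda}\eta_0)^{\n_\G(\L)^F}$ coming out of \cite[Proposition 5.13]{Cab-Spa17I} that makes it transparent that $\psi_0$ lies above $\lambda$. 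Second, in your Clifford step, from $\lambda^z=\lambda^u$ one gets $zu^{-1}\in\n_{\wt{\G}}(\L)^F_\lambda$ rather than $zu$; and since $u\in\n_\G(\L)^F$ you have $\psi_1^u=\psi_1$, so the discussion of preserving (a) and (b) under conjugation by $u$ is unnecessary.
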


\begin{proof}
First, observe that \cite[Theorem 1.1]{Bro-Spa20} and \cite[Theorem 1.1]{Bro22} rely on the proof of \cite[Theorem 4.3]{Cab-Spa17II}, and therefore on the arguments introduced in \cite[Section 5]{Cab-Spa17I}, via an application of \cite[Theorem 4.1]{Bro-Spa20}. In particular, we focus on the argument used in \cite[Proposition 5.13]{Cab-Spa17I}. Consider $\psi\in\irr(\n_\G(\L)^F\mid \lambda)$ and notice that $\lambda$ has an extension $\wh{\lambda}\in\irr(\n_\G(\L)_\lambda^F)$ by \cite[Theorem 1.2 (a)]{Bro-Spa20} (if $\G$ is of type $\mathbf{A}$) and \cite[Theorem 1.2]{Bro22} (if $\G$ is of type $\mathbf{C}$). Using Gallagher's theorem and  the Clifford correspondence, we can write $\psi=(\wh{\lambda}\eta)^{\n_\G(\L)^F}$ for some $\eta\in\irr(\n_\G(\L)_\lambda^F/\L^F)$. By the argument of \cite[Proposition 5.13]{Cab-Spa17I}, there exists $\eta_0\in\irr(\n_\G(\L)_\lambda^F/\L^F)$ such that $\psi_0:=(\wh{\lambda}\eta_0)^{\n_\G(\L)^F}$ satisfies Definition \ref{def:Star conditions} (L.i)-(L.ii) and $\psi=\psi_0^x$ for some $x\in\n_{\wt{\G}}(\L)^F$. By the definition of $\psi_0$, we deduce that $\psi_0$ lies above $\lambda$ and therefore $\psi$ lies above $\lambda$ and $\lambda^x$. Then Clifford's theorem implies that $\lambda=\lambda^{xy}$ for some $y\in\n_\G(\L)^F$ and we conclude that $\psi=\psi_0^{xy}$ with $xy\in\n_{\wt{\G}}(\L)_\lambda^F$.
\end{proof}

Using Lemma \ref{rmk:Local star} we can prove Condition \ref{cond:cEBC} under suitable assumption. We start by considering groups of type $\mathbf{A}$.

\begin{cor}
\label{cor:cEBC for type A}
Consider $\G$, $F:\G\to \G$, $q$, $\ell$ and $e$ as in Notation \ref{notation} and suppose that $\G$ is simple, simply connected of type $\mathbf{A}$ and that $\ell\in\Gamma(\G,F)$ with $\ell\geq 5$. Let $(\L,\lambda)$ be an $e$-cuspidal pair of $\G$ and set $B:=\bl(\lambda)^{\G^F}$. If one of the following is satisfied:
\begin{enumerate}
\item $\out(\G^F)_\mathcal{B}$ is abelian, where $\mathcal{B}$ is the $\wt{\G}^F$-orbit of $B$; or
\item $(\L,\lambda)$ is $e$-Brauer--Lusztig-cuspidal and either $B$ is unipotent or $B$ has maximal defect,
\end{enumerate}
then Condition \ref{cond:cEBC} holds for $(\L,\lambda)$.
\end{cor}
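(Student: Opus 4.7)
The plan is to apply Theorem~\ref{thm:cEBC follows from extensions} to the fixed pair $(\L,\lambda)$. Under the assumptions that $\G$ is simple simply connected of type $\mathbf{A}$, $\ell\in\Gamma(\G,F)$ and $\ell\geq 5$, both Hypothesis~\ref{hyp:Reduction} and Hypothesis~\ref{hyp:e-Harish-Chandra theory} are immediate from Remark~\ref{rmk:Reduction} and Remark~\ref{rmk:Brauer-Lusztig for simple simply connected}.

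I would first dispatch the extendibility hypotheses of Theorem~\ref{thm:cEBC follows from extensions}. Since $\wt{\G}^F/\G^F$ is cyclic in type $\mathbf{A}$ by \cite[Proposition 1.7.5]{Gec-Mal20}, maximal extendibility holds with respect to $\G^F\unlhd\wt{\G}^F$, and likewise with respect to $\n_\G(\L)^F\unlhd\n_{\wt{\G}}(\L)^F$ since $\n_{\wt{\G}}(\L)^F/\n_\G(\L)^F\simeq \wt{\G}^F/\G^F$. A $((\wt{\G}^F\mathcal{A})_\L\ltimes\mathcal{K})$-equivariant extension map for $\cusp{\wt{\L}^F}$ with respect to $\wt{\L}^F\unlhd\n_{\wt{\G}}(\L)^F$ is then produced from \cite[Corollary 4.7(b)]{Bro-Spa20}, exactly as in the proof of Corollary~\ref{cor:Bijections for nonconnected centre, type A and C}.

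The main obstacle is the requirement from Definition~\ref{def:Star conditions} for the Levi $\L$. Since $\G^F$-central isomorphisms of character triples are compatible with conjugation, I may assume $\lambda$ belongs to the chosen $\wt{\L}^F$-transversal $\mathcal{T}$ in $\cusp{\L^F}$, so that it suffices to establish conditions (G) and (L) at this specific $\lambda$. Condition (L) is exactly the content of Lemma~\ref{rmk:Local star}. For condition (G), the stabilizer and extendibility results of \cite[Theorem 4.1]{Cab-Spa17I} combined with the inputs from \cite[Theorem 1.2 and Corollary 4.7]{Bro-Spa20} provide the weaker variant (G') of Remark~\ref{rmk:Global star}, where the conjugating element lies in $\wt{\G}^F$ rather than in $\n_{\wt{\G}}(\L)^F_\lambda$. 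To upgrade (G') to (G), I invoke Remark~\ref{rmk:Global star}: hypothesis (i) of the corollary directly fits the first case of the remark since $\out(\G^F)_\mathcal{B}$ is abelian; under hypothesis (ii), the block $B$ is either unipotent or of maximal defect, hence $\wt{\G}^F$-invariant by Remark~\ref{rmk:Condition on block stability}, and together with the $e$-Brauer--Lusztig-cuspidality of $(\L,\lambda)$ this fits the second case of the remark. In either case (G) holds for $\lambda$, and Theorem~\ref{thm:cEBC follows from extensions} then yields Condition~\ref{cond:cEBC} for $(\L,\lambda)$.
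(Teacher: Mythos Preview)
Your proposal is correct and follows essentially the same route as the paper: apply Theorem~\ref{thm:cEBC follows from extensions}, verify the extendibility hypotheses via the cyclicity of $\wt{\G}^F/\G^F$ and \cite[Corollary 4.7(b)]{Bro-Spa20}, obtain (L) from Lemma~\ref{rmk:Local star}, and upgrade (G') (from \cite[Theorem 4.1]{Cab-Spa17I}) to (G) via Remark~\ref{rmk:Global star}, using Remark~\ref{rmk:Condition on block stability} for the $\wt{\G}^F$-invariance of $B$ in case (ii). The only cosmetic difference is that you explicitly reduce to $\lambda\in\mathcal{T}$, whereas the paper absorbs this step into the proof of Theorem~\ref{thm:cEBC follows from extensions} itself.
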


\begin{proof}
We prove the statement via an application of Theorem \ref{thm:cEBC follows from extensions}. First, we notice that Hypothesis \ref{hyp:e-Harish-Chandra theory} and Hypothesis \ref{hyp:Reduction} are satisfied under our assumptions thanks to Remark \ref{rmk:Reduction}. Noticing that $\wt{\G}^F/\G^F\simeq \n_{\wt{\G}}(\L)^F/\n_\G(\L)^F$ is cyclic, \cite[Corollary 11.22]{Isa76} implies that maximal extendibility holds with respect to $\G^F\unlhd \wt{\G}^F$ and to $\n_\G(\L)^F\unlhd \n_{\wt{\G}}(\L)^F$. Moreover, as in the proof of Corollary \ref{cor:Bijections for nonconnected centre, type A and C}, we obtain a $((\wt{\G}^F\mathcal{A})\ltimes \mathcal{K})$-equivariant extension map with respect to $\wt{\L}^F\unlhd \n_{\wt{\G}}(\L)^F$ by applying \cite[Corollary 4.7 (b)]{Bro-Spa20}. It remains to check the requirements of Definition \ref{def:Star conditions}. We obtain Definition \ref{def:Star conditions} (L) by applying Lemma \ref{rmk:Local star}. To prove Definition \ref{def:Star conditions} (G), we observe that condition (G') in Remark \ref{rmk:Global star} is satisfied by \cite[Theorem 4.1]{Cab-Spa17I}. Then, in order to apply Remark \ref{rmk:Global star} we only need to show that if $B$ is unipotent or has maximal defect, then it is $\wt{\G}^F$-invariant. This follows by Remark \ref{rmk:Condition on block stability}.
\end{proof}

Next, we consider Condition \ref{cond:cEBC} for groups of type $\mathbf{C}$.

\begin{cor}
\label{cor:cEBC for type C}
Consider $\G$, $F:\G\to \G$, $q$, $\ell$ and $e$ as in Notation \ref{notation} and suppose that $\G$ is simple, simply connected of type $\mathbf{C}$ and that $\ell\in\Gamma(\G,F)$ with $\ell\geq 5$. Then Condition \ref{cond:cEBC} holds for every $e$-cuspidal pair $(\L,\lambda)$ of $\G$.
\end{cor}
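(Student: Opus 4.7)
The plan is to verify the hypotheses of Theorem \ref{thm:cEBC follows from extensions} for every $e$-cuspidal pair $(\L,\lambda)$ of $\G$, along the lines of the proof of Corollary \ref{cor:cEBC for type A}. First, Hypothesis \ref{hyp:e-Harish-Chandra theory} and Hypothesis \ref{hyp:Reduction} hold under the given assumptions by Remark \ref{rmk:Reduction}. Since $\G$ is of type $\mathbf{C}$, the quotient $\wt{\G}^F/\G^F\simeq \n_{\wt{\G}}(\L)^F/\n_\G(\L)^F$ is cyclic of order at most $2$, so maximal extendibility with respect to both $\G^F\unlhd \wt{\G}^F$ and $\n_\G(\L)^F\unlhd \n_{\wt{\G}}(\L)^F$ follows from \cite[Corollary 11.22]{Isa76}, establishing condition (i). For condition (ii), the required $((\wt{\G}^F\mathcal{A})_\L\ltimes \mathcal{K})$-equivariant extension map for $\cusp{\wt{\L}^F}$ with respect to $\wt{\L}^F\unlhd \n_{\wt{\G}}(\L)^F$ is supplied by \cite[Theorem 4.1 (b)]{Bro-Spa20}, whose hypotheses are verified in type $\mathbf{C}$ via \cite[Corollary 4.13, Proposition 4.18, Lemma 5.11, Proposition 5.18]{Bro22}, exactly as in the proof of Corollary \ref{cor:Bijections for nonconnected centre, type A and C}.

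It remains to check the requirements of Definition \ref{def:Star conditions} for every $e$-split Levi subgroup $\L$ of $\G$. The local condition (L) holds by Lemma \ref{rmk:Local star}. For the global condition (G), we invoke \cite[Theorem 3.1]{Cab-Spa17II} to obtain the weaker condition (G') of Remark \ref{rmk:Global star}. The key observation, which differentiates type $\mathbf{C}$ from type $\mathbf{A}$, is that $\out(\G^F)$ is abelian in our setting: it is generated by a diagonal subgroup of order at most $2$ together with a cyclic subgroup of field-type automorphisms (the only potential extra automorphism, the exceptional one in $\Sp_4(2^f)$, itself generates a cyclic subgroup). Consequently, $\out(\G^F)_\mathcal{B}$ is abelian for every $\wt{\G}^F$-orbit $\mathcal{B}$ of $\ell$-blocks of $\G^F$, so Remark \ref{rmk:Global star} upgrades (G') to (G) unconditionally, for every $e$-cuspidal pair $(\L,\lambda)$. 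The conclusion then follows from Theorem \ref{thm:cEBC follows from extensions}.

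The main subtlety relative to the type $\mathbf{A}$ case of Corollary \ref{cor:cEBC for type A} is that here the auxiliary hypotheses on the block $B=\bl(\lambda)^{\G^F}$ can be dispensed with entirely. In type $\mathbf{A}$ these hypotheses were needed to pass from the $\wt{\G}^F$-conjugacy statement (G') provided by the inductive McKay machinery to the stronger $\n_{\wt{\G}}(\L)^F_\lambda$-conjugacy statement (G) that enters Theorem \ref{thm:cEBC follows from extensions}; in type $\mathbf{C}$, the abelianness of $\out(\G^F)$ renders this step automatic. Apart from this bookkeeping, the argument is a direct assembly of the criterion of Theorem \ref{thm:cEBC follows from extensions} with the external inputs from \cite{Cab-Spa17II}, \cite{Bro-Spa20} and \cite{Bro22}.
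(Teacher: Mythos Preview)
Your proposal is correct and follows essentially the same approach as the paper: verify the hypotheses of Theorem \ref{thm:cEBC follows from extensions} by combining Remark \ref{rmk:Reduction}, cyclicity of $\wt{\G}^F/\G^F$, the extension-map machinery from \cite{Bro-Spa20} and \cite{Bro22}, and the abelianness of $\out(\G^F)$ to upgrade (G') to (G) via Remark \ref{rmk:Global star}. Two minor points: you explicitly invoke Lemma \ref{rmk:Local star} for condition (L), which the paper's proof leaves implicit; and you cite \cite[Theorem 3.1]{Cab-Spa17II} for (G') in type $\mathbf{C}$ (in line with the paper's introduction), whereas the paper's proof at this spot cites \cite[Theorem 4.1]{Cab-Spa17I}, which appears to be a copy-paste slip from the type $\mathbf{A}$ argument.
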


\begin{proof}
We argue as in the proof of Corollary \ref{cor:cEBC for type A} and prove the result via an application of Theorem \ref{thm:cEBC follows from extensions}. By \ref{rmk:Reduction}, we notice that Hypothesis \ref{hyp:e-Harish-Chandra theory} and Hypothesis \ref{hyp:Reduction} are satisfied under our assumptions. Since $\wt{\G}^F/\G^F\simeq \n_{\wt{\G}}(\L)^F/\n_\G(\L)^F$ is cyclic, maximal extendibility holds with respect to $\G^F\unlhd \wt{\G}^F$ and to $\n_\G(\L)^F\unlhd \n_{\wt{\G}}(\L)^F$ by \cite[Corollary 11.22]{Isa76}. Moreover, as shown in the proof of Corollary \ref{cor:Bijections for nonconnected centre, type A and C}, we obtain a $((\wt{\G}^F\mathcal{A})\ltimes \mathcal{K})$-equivariant extension map for $\cusp{\wt{\L}^F}$ with respect to $\wt{\L}^F\unlhd \n_{\wt{\G}}(\L)^F$ by applying \cite[Theorem 4.1]{Bro-Spa20} together with \cite[Corollary 4.13, Proposition 4.18, Lemma 5.11, Proposition 5.18]{Bro22}. Finally, noticing that $\out(\G^F)$ is abelian, we obtain Definition \ref{def:Star conditions} (G) by applying Remark \ref{rmk:Global star} and noticing that condition (G') in Remark \ref{rmk:Global star} is satisfied by \cite[Theorem 4.1]{Cab-Spa17I}. 
\end{proof}

Finally, we prove Condition \ref{cond:Main iEBC} for certain $e$-Brauer--Lusztig-cuspidal pairs of groups of type $\mathbf{A}$ and $\mathbf{C}$. First, we deal with groups of type $\mathbf{A}$. Notice that the hypothesis of the following result is the same as the one of Corollary \ref{cor:cEBC for type A} with the additional assumption that $(\L,\lambda)$ is $e$-Brauer--Lusztig cuspidal even when considering the case where $\out(\G^F)_{\mathcal{B}}$ is abelian.

\begin{cor}
\label{cor:Type A}
Consider $\G$, $F:\G\to \G$, $q$, $\ell$ and $e$ as in Notation \ref{notation} and suppose that $\G$ is simple, simply connected of type $\mathbf{A}$ and that $\ell\in\Gamma(\G,F)$ with $\ell\geq 5$. Let $(\L,\lambda)$ be an $e$-Brauer--Lusztig-cuspidal pair of $\G$ and set $B:=\bl(\lambda)^{\G^F}$. If one of the following is satisfied:
\begin{enumerate}
\item $\out(\G^F)_\mathcal{B}$ is abelian, where $\mathcal{B}$ is the $\wt{\G}^F$-orbit of $B$;
\item $B$ is unipotent; or
\item $B$ has maximal defect,
\end{enumerate}
then Condition \ref{cond:Main iEBC} holds for $(\L,\lambda)$.
\end{cor}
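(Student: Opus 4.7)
The plan is to apply Theorem \ref{thm:iEBC follows from extensions} to the pair $(\L,\lambda)$ in $\G$. Under the standing assumptions that $\G$ is simple, simply connected of type $\mathbf{A}$ and $\ell\in\Gamma(\G,F)$ with $\ell\geq 5$, Hypothesis \ref{hyp:e-Harish-Chandra theory} and Hypothesis \ref{hyp:Reduction} follow directly from Remark \ref{rmk:Reduction} (after checking that $\G^F$ is none of $^2\mathbf{E}_6(2)$, $\mathbf{E}_7(2)$, $\mathbf{E}_8(2)$, which is automatic in type $\mathbf{A}$), so the set-up of Theorem \ref{thm:iEBC follows from extensions} is in place.

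Next I would verify conditions (i)--(iii) of that theorem, which is essentially parallel to the argument already used for Corollary \ref{cor:cEBC for type A}. For (i), since $\G$ is of type $\mathbf{A}$ the quotient $\wt{\G}^F/\G^F\simeq \n_{\wt{\G}}(\L)^F/\n_\G(\L)^F$ is cyclic, so maximal extendibility with respect to $\G^F\unlhd \wt{\G}^F$ and $\n_\G(\L)^F\unlhd \n_{\wt{\G}}(\L)^F$ follows from \cite[Corollary 11.22]{Isa76}. For (ii), the required $((\wt{\G}^F\mathcal{A})_\L\ltimes\mathcal{K})$-equivariant extension map for $\cusp{\wt{\L}^F}$ is provided by \cite[Corollary 4.7 (b)]{Bro-Spa20}. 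For (iii), condition (L) of Definition \ref{def:Star conditions} is exactly the content of Lemma \ref{rmk:Local star}, while condition (G) is obtained by combining \cite[Theorem 4.1]{Cab-Spa17I} (which yields condition (G') of Remark \ref{rmk:Global star}) with Remark \ref{rmk:Global star} itself.

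The heart of the argument, and the only place where the three cases actually split, is then the verification of the block theoretic condition (iv) of Theorem \ref{thm:iEBC follows from extensions} together with the additional hypothesis needed to invoke Remark \ref{rmk:Global star} when promoting (G') to (G). In case (i), $\out(\G^F)_\mathcal{B}$ is abelian by assumption, so both (iv.a) and the first alternative of Remark \ref{rmk:Global star} are immediate. In cases (ii) and (iii), $(\L,\lambda)$ is $e$-Brauer--Lusztig-cuspidal by hypothesis, and Remark \ref{rmk:Condition on block stability} shows that every block of $J$ covering $B$ is $\wt{\G}^F$-invariant for every $\G^F\leq J\leq \wt{\G}^F$ whenever $B$ is unipotent or of maximal defect; in particular $B$ itself is $\wt{\G}^F$-invariant, so the second alternative of Remark \ref{rmk:Global star} applies and (iv.b) of Theorem \ref{thm:iEBC follows from extensions} is satisfied. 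With all hypotheses of Theorem \ref{thm:iEBC follows from extensions} in place, Condition \ref{cond:Main iEBC} follows for $(\L,\lambda)$.

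The main obstacle is purely bookkeeping: there is no genuinely new content beyond Theorem \ref{thm:iEBC follows from extensions}, but one has to make sure that in each of the three alternatives the correct combination of Remark \ref{rmk:Global star} and Remark \ref{rmk:Condition on block stability} is invoked, and that the $e$-Brauer--Lusztig-cuspidality hypothesis is used in cases (ii) and (iii) to pass from (G') to (G). No delicate calculation is needed.
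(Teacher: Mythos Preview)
Your proposal is correct and follows essentially the same route as the paper: apply Theorem \ref{thm:iEBC follows from extensions}, borrow the verification of (i)--(iii) from the proof of Corollary \ref{cor:cEBC for type A}, and handle (iv) case by case via Remark \ref{rmk:Condition on block stability} and Remark \ref{rmk:Global star}. The paper's proof is just a compressed version of what you wrote, invoking the proof of Corollary \ref{cor:cEBC for type A} rather than repeating its steps.
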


\begin{proof}
We show that Condition \ref{cond:Main iEBC} holds for the $e$-cuspidal pair $(\L,\lambda)$ of $\G$ by applying Theorem \ref{thm:iEBC follows from extensions}. By the proof of Corollary \ref{cor:cEBC for type A} it only remains to verify Theorem \ref{thm:iEBC follows from extensions} (iv). Therefore, it is enough to show that if $B$ is unipotent or has maximal defect then Theorem \ref{thm:iEBC follows from extensions} (iv.b) is satisfied. This fact follows by using \cite[Proposition 13.20]{Dig-Mic91} and the results of \cite[Section 5]{Cab-Spa15} as explained in Remark \ref{rmk:Condition on block stability}. Now the result follows by recalling that $(\L,\lambda)$ is $e$-Brauer--Lusztig-cuspidal by assumption.
\end{proof}

To conclude, we consider groups of type $\mathbf{C}$ and prove Condition \ref{cond:Main iEBC} for all $e$-Brauer--Lusztig-cuspidal pairs under suitable assumptions on $\ell$.

\begin{cor}
\label{cor:Type C}
Consider $\G$, $F:\G\to \G$, $q$, $\ell$ and $e$ as in Notation \ref{notation} and suppose that $\G$ is simple, simply connected of type $\mathbf{C}$ and that $\ell\in\Gamma(\G,F)$ with $\ell\geq 5$. Then Condition \ref{cond:Main iEBC} holds for every $e$-Brauer--Lusztig-cuspidal pair $(\L,\lambda)$ of $\hspace{1pt}\G$.
\end{cor}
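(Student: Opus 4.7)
The plan is to derive this corollary by applying Theorem \ref{thm:iEBC follows from extensions} to the given $e$-Brauer--Lusztig-cuspidal pair $(\L,\lambda)$ with $B:=\bl(\lambda)^{\G^F}$, mirroring the derivation of Corollary \ref{cor:cEBC for type C} from Theorem \ref{thm:cEBC follows from extensions}. Indeed, conditions (i), (ii) and (iii) of Theorem \ref{thm:iEBC follows from extensions} have already been established in the proof of Corollary \ref{cor:cEBC for type C}: Hypothesis \ref{hyp:e-Harish-Chandra theory} and Hypothesis \ref{hyp:Reduction} hold by Remark \ref{rmk:Reduction} under our restrictions on $\ell$; maximal extendibility with respect to $\G^F\unlhd\wt{\G}^F$ and $\n_\G(\L)^F\unlhd\n_{\wt{\G}}(\L)^F$ follows from \cite[Corollary 11.22]{Isa76} because $\wt{\G}^F/\G^F\simeq\n_{\wt{\G}}(\L)^F/\n_\G(\L)^F$ is cyclic in type $\mathbf{C}$; the $((\wt{\G}^F\mathcal{A})_\L\ltimes\mathcal{K})$-equivariant extension map for $\cusp{\wt{\L}^F}$ with respect to $\wt{\L}^F\unlhd\n_{\wt{\G}}(\L)^F$ is obtained from \cite[Theorem 4.1]{Bro-Spa20} together with \cite[Corollary 4.13, Proposition 4.18, Lemma 5.11, Proposition 5.18]{Bro22}; and the star conditions of Definition \ref{def:Star conditions} follow from Lemma \ref{rmk:Local star} for the local part (L) and from Remark \ref{rmk:Global star} combined with \cite[Theorem 4.1]{Cab-Spa17I} for the global part (G), the latter being applicable because $\out(\G^F)$ is abelian (so condition (G') may be invoked).

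The only new point, with respect to Corollary \ref{cor:cEBC for type C}, is the block theoretic requirement Theorem \ref{thm:iEBC follows from extensions} (iv). Here the geometry of type $\mathbf{C}$ works in our favour: the Dynkin diagram $\mathbf{C}_n$ carries no non-trivial symmetry, so $\mathcal{A}$ is generated by field automorphisms only, and $\out(\G^F)$ is the product of a cyclic group of diagonal automorphisms with a cyclic group of field automorphisms. In particular $\out(\G^F)$ is abelian, and therefore $\out(\G^F)_\mathcal{B}$ is abelian for every $\wt{\G}^F$-orbit $\mathcal{B}$ of blocks of $\G^F$; condition (iv.a) of Theorem \ref{thm:iEBC follows from extensions} is thus automatic. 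Since $(\L,\lambda)$ is $e$-Brauer--Lusztig-cuspidal by hypothesis, Theorem \ref{thm:iEBC follows from extensions} applies and yields Condition \ref{cond:Main iEBC} for $(\L,\lambda)$ and $\G$.

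No genuine obstacle arises in this final assembly: the hard work is already encoded in \cite{Bro22} (which supplies the equivariant extension map for $e$-cuspidal characters of $e$-split Levi subgroups in type $\mathbf{C}$), in \cite{Cab-Spa17I} (which supplies condition (G')), and in Theorem \ref{thm:iEBC follows from extensions} itself. The only care required is to confirm the abelianness of $\out(\G^F)$ in all relevant cases of type $\mathbf{C}$ (including $\Sp_{2n}(q)$ in characteristic $2$, where no diagram graph automorphism is allowed under our conventions on $\mathcal{A}$), which is a standard verification.
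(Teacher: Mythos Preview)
Your proposal is correct and follows essentially the same route as the paper's proof: apply Theorem \ref{thm:iEBC follows from extensions}, invoke the proof of Corollary \ref{cor:cEBC for type C} for conditions (i)--(iii), and observe that $\out(\G^F)$ is abelian in type $\mathbf{C}$ so that (iv.a) is automatic. Your write-up is in fact slightly more explicit than the paper's, in that you spell out the role of Lemma \ref{rmk:Local star} for condition (L), which the paper's proof of Corollary \ref{cor:cEBC for type C} leaves implicit.
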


\begin{proof}
As in the proof of Corollary \ref{cor:Type A} we apply Theorem \ref{thm:iEBC follows from extensions}. Using the proof of Corollary \ref{cor:cEBC for type C} we therefore only need to check Theorem \ref{thm:iEBC follows from extensions} (iv). Since $\out(\G^F)$ is abelian, Theorem \ref{thm:iEBC follows from extensions} (iv.a) holds and then the results follows because $(\L,\lambda)$ is $e$-Brauer--Lusztig-cuspidal by hypothesis.
\end{proof}

\bibliographystyle{alpha}
\bibliography{References}
\vspace{1cm}

DEPARTMENT OF MATHEMATICS, CITY, UNIVERSITY OF LONDON, EC$1$V $0$HB, UNITED KINGDOM.
\textit{Email address:} \href{mailto:damiano.rossi@city.ac.uk}{damiano.rossi@city.ac.uk}
\end{document}